\newcounter{fonts}
\let\eeee\edef
\csname \Alph{fonts}\Alph{fonts}\endcsname{\noexpand\mathbb{\Alph{fonts}}}
\renewcommand{\to}{\rightarrow}
\setlist[enumerate,1]{label={\rm(\roman*)}}
\newtheorem{thm}{Theorem}[section]
\newtheorem{defn}[thm]{Definition}
\newtheorem{prop}[thm]{Proposition}
\newtheorem{cor}[thm]{Corollary}
\newtheorem{lem}[thm]{Lemma}
\crefname{defn}{definition}{definitions}
\crefname{lem}{lemma}{lemmas}
\newtheoremstyle{remarkstyle}{.5\baselineskip\@plus.2\baselineskip\@minus.2\baselineskip}{.5\baselineskip\@plus.2\baselineskip\@minus.2\baselineskip}{\rm}{}{\bfseries}{.}{.5em}{}
\theoremstyle{remarkstyle}
\newtheorem{rem}[thm]{Remark}
\newtheorem{example}[thm]{Example}
\newcommand{\GL}{\mathbf{GL}}
\newcommand{\PGL}{\mathbf{PGL}}
\newcommand{\PGO}{\mathbf{PGO}}
\newcommand{\Lie}{\mathfrak{Lie}}
\newcommand{\fpgl}{\mathfrak{pgl}}
\newcommand{\fpgo}{\mathfrak{pgo}}
\newcommand{\Sch}{\mathsf{Sch}}
\newcommand{\cHom}{\mathcal{H}\hspace{-0.4ex}\textit{o\hspace{-0.2ex}m}}
\newcommand{\cEnd}{\mathcal{E}\hspace{-0.4ex}\textit{n\hspace{-0.2ex}d}}
\newcommand{\bAut}{\mathbf{Aut}}
\newcommand{\cSym}{\mathcal{S}\hspace{-0.5ex}\textit{y\hspace{-0.3ex}m}}
\newcommand{\cSymd}{\mathcal{S}\hspace{-0.5ex}\textit{y\hspace{-0.3ex}m\hspace{-0.2ex}d}}
\newcommand{\cSkew}{\mathcal{S}\hspace{-0.4ex}\textit{k\hspace{-0.2ex}e\hspace{-0.3ex}w}}
\newcommand{\cAlt}{\mathcal{A}\hspace{-0.1ex}\textit{$\ell$\hspace{-0.3ex}t}}
\newcommand{\cDer}{\mathcal{D}\hspace{-0.35ex}\textit{e\hspace{-0.15ex}r}}
\DeclareMathOperator{\Hom}{Hom}
\DeclareMathOperator{\Spec}{Spec}
\DeclareMathOperator{\Mat}{M}
\DeclareMathOperator{\Id}{Id}
\DeclareMathOperator{\Img}{Img}
\DeclareMathOperator{\Ker}{Ker}
\DeclareMathOperator{\Trd}{Trd}
\DeclareMathOperator{\Der}{Der}
\DeclareMathOperator{\Inn}{Inn}
\DeclareMathOperator{\ad}{ad}
\DeclareMathOperator{\Aut}{Aut}
\DeclareMathOperator{\rH}{\textrm{H}} 
\newcommand{\iso}{\overset{\sim}{\longrightarrow}}
\newcommand{\inj}{\hookrightarrow}
\newcommand{\surj}{\twoheadrightarrow}
\newcommand{\und}{\underline{\hspace{2ex}}}
\DeclareMathOperator{\Ext}{Ext}
\DeclareMathOperator{\Def}{Def}
\DeclareMathOperator{\res}{res}
\newcommand{\pr}{\textrm{pr}}
\newcommand{\dX}{{\tX}}
\newcommand{\dG}{{\widetilde{\bG}}}
\newcommand{\dF}{{\widetilde{\cF}}}
\newcommand{\dA}{{\widetilde{\cA}}}
\newcommand{\Az}{\textrm{Az}}
\newcommand{\Art}{\mathsf{Art}}
\newcommand*{\relrelbarsep}{.386ex}
\newcommand*{\relrelbar}{%
  \mathrel{%
    \mathpalette\@relrelbar\relrelbarsep
  }%
}
\newcommand*{\@relrelbar}[2]{%
  \raise#2\hbox to 0pt{$\m@th#1\relbar$\hss}%
  \lower#2\hbox{$\m@th#1\relbar$}%
}
\providecommand*{\rightrightarrowsfill@}{%
  \arrowfill@\relrelbar\relrelbar\rightrightarrows
}
\providecommand*{\xrightrightarrows}[2][]{%
  \ext@arrow 0359\rightrightarrowsfill@{#1}{#2}%
}
\tikzset{
  trim node/.default=1cm,
  trim node/.style={
    overlay,
    append after command={
      ([xshift={+#1}]\tikzlastnode.north west)
      ([xshift={+-#1}]\tikzlastnode.south east)}},
  down and trim/.default=1cm,
  down and trim/.style={
    yshift=-(\pgfmatrixcurrentcolumn-1)*1.5\baselineskip,
    trim node={#1}},
  downup and trim/.default=1cm,
  downup and trim/.style={
    yshift=iseven(\pgfmatrixcurrentcolumn) ? -1.5\baselineskip : 0pt,
    trim node={#1}},
  -|/.style={to path={-|(\tikztotarget)\tikztonodes}},
  |-/.style={to path={|-(\tikztotarget)\tikztonodes}},
  -| sl/.style={-|, xslant=-1},
  |- sl/.style={|-, xslant= 1},
  center picture/.style={
    trim left=(current bounding box.center),
    trim right=(current bounding box.center)}}
\tikzset{
    labl/.style={anchor=south, rotate=-90, inner sep=.5mm}
}
    \newcommand*{\qrr@gobblenexttocentry}[5]{}
    \newcommand*{\qrr@gobblenexttocentry}[4]{}
\newcommand*{\addsubsection}{%
    \addtocontents{toc}{\protect\qrr@gobblenexttocentry}%
    \subsection*}
\title{On deformations of Azumaya algebras with quadratic pair}
\author{Eoin Mackall}
\address{Department of Mathematics, University of California Santa Cruz, Santa Cruz, CA USA}
\email{emackall@ucsc.edu}
\author{Cameron Ruether}
\address{The ``Simion Stoilow" Institute of Mathematics of the Romanian Academy, Bucharest, Romania}
\email{cameronruether@gmail.com}
\keywords{Azumaya algebras; algebras with involution; cohomological obstructions; deformation theory; Igusa surface; quadratic pairs; relative deformations; tangent obstruction theory}
\subjclass{Primary: 14D15, Secondary: 11E99, 16H05, 16W10, 20G10, 20G35}
\thanks{The second author was supported by the project ``Group schemes, root systems, and related representations" founded by the European Union - NextGenerationEU through Romania's National Recovery and Resilience Plan (PNRR) call no. PNRR-III-C9-2023-I8, Project CF159/31.07.2023, and coordinated by the Ministry of Research, Innovation and Digitalization (MCID) of Romania.}
\date{April 8, 2025}
\begin{document}
\begin{abstract}
We construct a tangent-obstruction theory for Azumaya algebras equipped with a quadratic pair. Under the assumption that either $2$ is a global unit or the algebra is of degree $2$, we show how the deformation theory of these objects reduces to the deformation theory of the underlying Azumaya algebra. Namely, if the underlying Azumaya algebra has unobstructed deformations then so does the quadratic pair.

On the other hand, in the purely characteristic $2$ setting, we construct an Azumaya algebra with unobstructed deformations which can be equipped with a quadratic pair such that the associated triple has obstructed deformations. Our example is a biquaternion Azumaya algebra on an Igusa surface.

Independently from the above results, we also introduce a new obstruction for quadratic pairs, existing only in characteristic 2, which is intermediate to both the strong and weak obstructions that were recently introduced by Gille, Neher, and the second named author. This intermediate obstruction characterizes when a canonical extension of the Lie algebra sheaf of the automorphism group scheme of some quadratic triple is split.
\end{abstract}

\maketitle

\tableofcontents

\section*{Introduction}
{%
\renewcommand{\thethm}{\Alph{thm}}
\addsubsection{Background} Constructing moduli spaces and understanding their geometry has been a driving force behind the development of much of modern algebraic geometry. Using Grothendieck's functorial perspective, the deformation theory of objects that are parametrized by a moduli space is a natural subject arising from the study of the local smoothness of that moduli space. A moduli space fails to be smooth at a point when the object which that point parameterizes fails to deform in some way.

A landmark example of such phenomena comes from an Igusa surface \cite{MR74085}. An Igusa surface is a scheme, existing over a field $k$ of characteristic $p>0$, which can be realized as the quotient of a product of two elliptic curves, with at least one being ordinary, by a free action of $\mathbb{Z}/p\mathbb{Z}$. While it's known that the Picard scheme of any variety in characteristic $0$ is smooth, the Picard scheme of an Igusa surface is everywhere nonreduced. As a consequence, given any line bundle on such a surface, we can find a lifting of that line bundle to a thickening of the Igusa surface which has obstructed deformations to some further infinitesimal thickening.

Even when an object is not obviously parametrized by a moduli space, studying the deformation theory of that object can often shed light on how the object varies in families. For a modern example making use of such analysis, we point to \cite{Zdanowicz} where Zdanowicz uses the deformation theory of equivariant bundles to show that certain Fano varieties in characteristic $p>0$ which violate Kodaira vanishing do not lift to any ring in which $p\neq 0$, i.e., any ring which does not contain $\ZZ/p\ZZ$ as a subring. For an example in the theme of this paper, we point to de Jong's proof of the period index conjecture for the function fields of surfaces \cite{MR2060023} which relies crucially on the deformation theory of Azumaya algebras.

The deformation theory of a $\mathbf{G}$-torsor, for a group scheme $\mathbf{G}$ over a base $S$, was worked out in great generality by Illusie, see \cite[Theorem 2.6]{Illusie}. If one restricts themselves to group schemes $\bG$ which are smooth over $S$, then the theory reduces to one of the underlying sheaf of Lie algebras of $\bG$. In particular, for an infinitesimal thickening of schemes $X\subset X'$ defined by a square-zero ideal $\cJ\subset \cO_{X'}$, a $\bG$--torsor $\mathscr{H}$ on $X$ has an obstruction class in $\rH^2(X,\Lie_{\bG}\otimes_{\cO_X} \cJ)$ which vanishes if and only if a deformation $\mathscr{H}'$ of $\mathscr{H}$ exists on $X'$. When such a deformation exists, one further knows that all such deformations form an affine space under $\rH^1(X,\Lie_{\bG}\otimes_{\cO_X} \cJ)$.

An important question is then: for a given $\mathbf{G}$, under what conditions does one have the vanishing, or nonvanishing, of these cohomological obstruction classes? Demonstrations of nonvanishing obstructions are typically rarer and, therefore, of more interest. De Clerq, Florence, and Lucchini Arteche have anlayzed this problem for deformations of vector bundles from a scheme $X$ to schemes $\mathbf{W}_r(X)$ of Witt vectors of $X$, \cite{De-Clercq_Florence_L-A}. They show that, for integers $n,m$ with $2\leq m\leq n-2$, the tautological vector bundle on a Grassmannian $\mathbf{Gr}(m,n)$ over a finite field $\mathbb{F}_p$ has obstructed deformations to $\mathbf{W}_2(\mathbf{Gr}(m,n))$. This work should be compared with that of Zdanowicz \cite[Section 7.3]{Zdanowicz} who uses a similar result to construct torsors with obstructed deformations under semisimple linear algebraic group schemes $\mathbf{G}$ of classical type.

The standard passage from finite rank vector bundles to Azumaya algebras sends a vector bundle to its endomorphism algebra sheaf. If the endomorphism algebra sheaf of a vector bundle has obstructed deformations then the vector bundle is necessarily also obstructed. However, it is easy to see that the property of having obstructed deformations is not always preserved in the other direction. One can take any line bundle $\cL$ on an Igusa surface $X$ which has obstructed deformations relative to some thickening of $X$. Then, the endomorphism algebra $\cEnd_{\cO_X}(\cL) \cong \cO_X$ clearly admits deformations. In examples such as this, we say that $\cL$ is \emph{relatively obstructed} compared to its endomorphism algebra $\cEnd_{\cO_X}(\cL)$. Inversely, we would say that line bundles are \emph{relatively unobstructed} compared to their endomorphism algebras for a given infinitesimal thickening if, whenever $\cEnd_{\cO_X}(\cL)$ deforms then so must $\cL$. We will use this terminology, relatively obstructed or relatively unobstructed, to describe how various functors interact with obstructions to deformations.

\addsubsection{Deformations of Azumaya algebras with quadratic pair} In this paper we investigate whether Azumaya algebras with quadratic pair are relatively obstructed compared to their underlying Azumaya algebra. Quadratic pairs on central simple algebras were introduced in \cite{KMRT} as a tool that allows one to solve problems related to quadratic forms and linear algebraic groups of type $D$ in characteristic $2$. Namely, quadratic pairs provide a way to describe, in any characteristic, any group of type $D_n$, excluding $D_4$ and trialty, as the automorphism group of a central simple algebra with quadratic pair. This construction was recently extended to Azumaya algebras in \cite{CF2015GroupesClassiques} and studied further in \cite{GNR2023AzumayaObstructions}.

In more detail, a quadratic pair $(\sigma,f)$ on an Azumaya algebra $\cA$ on an arbitrary scheme $X$ consists of an $\cO_X$--linear orthogonal involution $\sigma \colon \cA \to \cA$ and a linear map $f\colon \cSym_{\cA,\sigma} \to \cO_X$ on the subsheaf $\cSym_{\cA, \sigma}\subset \cA$ of sections invariant under $\sigma$, which satisfies certain properties (see \Cref{quadratic_triples} for more). The map $f$ is called a semi-trace and we often call such a tuple $(\cA,\sigma,f)$ a quadratic triple.

Quadratic pairs on degree $n$ Azumaya algebras are in correspondence with $\PGO_n$--torsors and, therefore, their deformation theory is covered by the results from \cite{Illusie}. Using this theory, we address the following relative deformation question: is there an Azumaya algebra $\mathcal{A}$ with quadratic pair $(\sigma,f)$ such that the quadratic triple $(\mathcal{A},\sigma,f)$ has obstructed deformations but such that the deformations of $\cA$, simply considered as an Azumaya algebra, are unobstructed?

We work in the following framework. Let $(C,\frm_C)$ and $(C',\frm_{C'})$ be two local Artinian $k$--algebras whose residue fields are also $k$. Let $C' \surj C$ be a surjection given by an ideal $J \subseteq \frm_{C'}$ such that $\frm_{C'}\cdot J = 0$; note that this implies $J^2=0$. Such a surjection is called a small extension. We consider a $k$-scheme $X$ and the associated schemes $X_C = X\times_k C$ and $X_{C'} = X \times_k C'$. The map $X_C \inj X_{C'}$ associated to $C' \surj C$ is then a square-zero (first order) thickening of schemes.

Our first result regarding the relative obstructions of quadratic pairs is a positive statement, showing that vanishing of obstructions for the quadratic pair follows from vanishing of obstructions of the underlying Azumaya algebra in two notable special cases (our \Cref{relatively_unobstructed}). 

\begin{thm}\label{intro_i}
Let $X_C \to X_{C'}$ be a square-zero thickening of schemes associated to a small extension of local Artinian $k$--algebras as above. Let $(\cA,\sigma,f)$ be an Azumaya algebra with quadratic pair on $X_C$. 

Then, there exists a deformation $(\cA',\sigma',f')$ on $X_{C'}$ of the quadratic triple $(\cA,\sigma,f)$ if and only if there exists a deformation $\cA''$ of the underlying Azumaya algebra $\cA$ under either of the following two hypothesis:
\begin{enumerate}
\item\label{intro_hyp_i} assuming either the characteristic of $k$ is not $2$, i.e.\ $\mathrm{char}(k)\neq 2$;
\item\label{intro_hyp_ii} or assuming $\mathrm{deg}(\mathcal{A})=2$.
\end{enumerate}
\end{thm}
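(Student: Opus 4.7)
The only-if direction is immediate: forgetting the pair data turns any deformation of the triple into a deformation of $\cA$. For the converse, my plan is to invoke Illusie's obstruction theory for smooth group schemes \cite[Theorem 2.6]{Illusie}, applied to the forgetful morphism of smooth group schemes $\PGO_n \to \PGL_n$ (with $n = \deg\cA$). Writing $\cJ$ for the kernel of $C'\twoheadrightarrow C$, this produces classes
\[
\omega_{\mathrm{tr}} \in \rH^2(X_C,\fpgo_{\cA,\sigma,f}\otimes_{\cO_{X_C}}\cJ), \qquad \omega_{\mathrm{alg}} \in \rH^2(X_C,\fpgl_{\cA}\otimes_{\cO_{X_C}}\cJ),
\]
whose vanishings are equivalent to the existence of deformations of the triple and of the algebra, respectively, and functoriality under the natural map of sheaves $\fpgo_{\cA,\sigma,f}\to\fpgl_\cA$ sends $\omega_{\mathrm{tr}}$ to $\omega_{\mathrm{alg}}$. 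The theorem then reduces to showing that $\omega_{\mathrm{alg}}=0$ forces $\omega_{\mathrm{tr}}=0$; for this it suffices to exhibit a retraction of $\fpgo_{\cA,\sigma,f} \hookrightarrow \fpgl_{\cA}$ as sheaves of $\cO_{X_C}$-modules, since split injections of sheaves induce split injections on $\rH^2$.

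In case (i), where $\mathrm{char}(k)\neq 2$, the projector $e_\sigma = \tfrac{1}{2}(\mathrm{id}-\sigma)$ is available and yields the decomposition $\cA = \cSym_{\cA,\sigma}\oplus\cSkew_{\cA,\sigma}$. Since $\cO_{X_C}\subset \cSym_{\cA,\sigma}$, this descends to a decomposition $\fpgl_\cA = (\cSym_{\cA,\sigma}/\cO_{X_C})\oplus \cSkew_{\cA,\sigma}$ which identifies $\fpgo_{\cA,\sigma,f}$ with the skew summand, producing the desired retract.

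In case (ii) the averaging trick from case (i) is unavailable in characteristic $2$, so the plan is to construct the retraction by a direct local computation. In degree $2$ the Azumaya algebra $\cA$ is \'etale-locally a $2\times 2$ matrix algebra, the sheaves $\fpgo_{\cA,\sigma,f}$ and $\fpgl_\cA$ have very small rank, and an explicit complement to $\fpgo$ inside $\fpgl$ can be written down in terms of the local normal form of a quadratic pair in degree $2$. The main obstacle will be verifying that this complement is canonical enough to glue across trivializations of $\cA$, i.e.\ that the transition automorphisms of the local $2\times 2$ pictures preserve the chosen complement; once this gluing is confirmed, split injectivity on $\rH^2$ closes the argument exactly as in case (i).
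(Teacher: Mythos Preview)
Your proposal is correct and follows essentially the same approach as the paper: reduce via the compatibility of obstruction classes under $\fpgo_{(\cA,\sigma,f)}\hookrightarrow\fpgl_\cA$ to exhibiting an $\cO_X$-module retraction of this inclusion, then produce the retraction by the averaging projector $\tfrac12(\Id-\sigma)$ in case~(i) and by an explicit local complement in the $2\times 2$ matrix model, checked to be $\PGO_2$-stable so that it glues, in case~(ii). The paper carries this out in \Cref{degree_2_lie_split,thm: n2split,relatively_unobstructed}.
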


Note that, in the above statement, the algebra $\cA'$ appearing in the deformation $(\cA',\sigma',f')$ is not necessarily the same as the deformation $\cA''$ of just the algebra $\cA$.

In the opposite extreme, we construct an example satisfying neither of the above hypothesis \ref{intro_hyp_i} or \ref{intro_hyp_ii} where such relative obstructions really do exist. Our example makes use of the existing obstructions to deforming line bundles on an Igusa surface. Specifically, let $X$ be an Igusa surface defined over an algebraically closed field $k$ of characteristic $2$. Let $\mu_2 = \Spec(k[x]/( x^2 ))$, $\tau_3 = \Spec(k[x]/( x^3 ))$, with the canonical closed embedding $\mu_2 \to \tau_3$. We then have morphisms
\[
\begin{tikzcd}
X_{\mu_2} \ar[dr,"\pi"] \ar[rr,hookrightarrow] & & X_{\tau_3} \ar[dl] \\
 & X & 
\end{tikzcd}
\]
where the top horizontal map is a square-zero infinitesimal thickening and the slanted arrows are the canonical projection maps. We show (in \Cref{thm: main} below) the following:
 
\begin{thm}\label{intro_ii}
There exists an Igusa surface $X$ (gotten as the quotient of any two ordinary elliptic curves) and a quadratic triple $(\cA,\sigma,f)$ on $X_{\mu_2}$ such that
\begin{enumerate}
\item the quadratic triple $(\cA,\sigma,f)$ does not deform as a quadratic triple to $X_{\tau_3}$;
\item the Azumaya algebra $\cA$ on $X_{\mu_2}$ does deform to $X_{\tau_3}$ (the algebra $\cA$ is even unobstructed).
\end{enumerate}
\end{thm}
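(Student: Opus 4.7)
The guiding idea is to transfer the known nonreduced structure of the Picard scheme of an Igusa surface into an obstruction to deforming a quadratic triple, while arranging the underlying Azumaya algebra to be split (hence obviously unobstructed). Since \Cref{intro_i} prevents us from using characteristic not $2$ or degree $2$, the simplest algebra to try is a biquaternion in characteristic $2$, i.e.\ an algebra of degree $4$.

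First, I would fix the geometric setup. Let $E_1,E_2$ be ordinary elliptic curves over an algebraically closed field $k$ of characteristic $2$, and let $X = (E_1\times E_2)/(\ZZ/2\ZZ)$ be an associated Igusa surface. Standard facts about the Picard scheme of $X$ (which is everywhere nonreduced along its identity component) produce a line bundle $\cL$ on $X_{\mu_2}$ whose restriction to $X$ is trivial but which fails to lift to $X_{\tau_3}$; equivalently, the class $\text{ob}(\cL)\in \rH^2(X,\cO_X)$ attached to the obstruction to deforming $\cL$ is nonzero. This nonvanishing class will be the source of the obstruction we want for the quadratic triple.

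Next, I would construct the quadratic triple. Take $\cE = \cO_{X_{\mu_2}}\oplus \cL$ with its tautological hyperbolic pairing, and set $\cA = \cEnd_{\cO_{X_{\mu_2}}}(\cE) \otimes_{\cO_{X_{\mu_2}}} \cEnd_{\cO_{X_{\mu_2}}}(\cE)$; this is a split biquaternion Azumaya algebra of degree $4$ which visibly lifts to any thickening (simply by taking the same construction on $\cE$ lifted arbitrarily, or using the matrix algebra $\Mat_4(\cO_{X_{\tau_3}})$). Equip $\cA$ with the tensor product $\sigma$ of the canonical orthogonal involutions coming from the hyperbolic pairings, and choose a semi-trace $f$ on $\cSym_{\cA,\sigma}$ whose cohomological data encodes the line bundle $\cL$. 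The key point is that, in characteristic $2$, the space of semi-traces on $(\cA,\sigma)$ is a torsor under a sheaf directly tied to $\cO_{X_{\mu_2}}$, and a comparison between the tangent-obstruction theory for triples developed earlier in the paper and the usual obstruction theory for invertible sheaves shows that the obstruction to lifting $(\cA,\sigma,f)$ to $X_{\tau_3}$ maps to $\text{ob}(\cL)\in \rH^2(X,\cO_X)$ under a canonical map, and does so nontrivially.

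Finally, conclude as follows. Since $\cA \cong \Mat_4(\cO_{X_{\mu_2}})$ (or is at least a pullback of a split algebra on $X$), there is an obvious deformation $\cA''$ on $X_{\tau_3}$, which verifies (ii). On the other hand, the nonzero class $\text{ob}(\cL)$ obstructs the lift of the triple, establishing (i). The main obstacle in making this work is the middle paragraph: identifying the precise sheaf cohomology group carrying the obstruction to the quadratic pair and showing it receives, through the construction above, the line bundle obstruction class $\text{ob}(\cL)$ as a nonzero image. This likely requires the intermediate obstruction for quadratic pairs described in the abstract, which characterizes the splitting of a canonical extension of $\Lie_{\bAut(\cA,\sigma,f)}$ and naturally lives on a group containing a copy of $\rH^2(X,\cO_X)$; the computation then amounts to identifying the image of our chosen semi-trace under the connecting map in the long exact sequence of that extension.
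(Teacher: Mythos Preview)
Your proposal diverges substantially from the paper's proof and, as written, has genuine gaps that do not obviously close.

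\medskip
\textbf{Where the argument breaks.}
First, the assertion that $\cA=\cEnd(\cE)\otimes\cEnd(\cE)\cong \Mat_4(\cO_{X_{\mu_2}})$ is false in general: only the restriction $\cA|_X$ is split, since $\cL|_X\cong\cO_X$. On $X_{\mu_2}$ the algebra $\cEnd(\cO\oplus\cL)$ is a genuine twisted form of $\Mat_2$, and so is its tensor square. Thus your verification of (ii) by ``obviously lifts because it is split'' does not go through; you still owe an argument that the obstruction class of $\cA$ in $\rH^2(X,\fpgl_4)$ vanishes. Second, and more seriously, the sentence ``choose a semi-trace $f$ whose cohomological data encodes the line bundle $\cL$'' is the entire content of (i) and is left as a black box. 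For fixed $(\cA,\sigma)$ the set of semi-traces is parametrized by global sections, not by line bundles, and there is no evident map that sends a choice of $f$ to a class in $\rH^2(X,\cO_X)$ hitting $\mathrm{ob}(\cL)$. The intermediate obstruction you invoke at the end lives in $\rH^1$, not $\rH^2$, and controls splitness of a Lie-algebra extension, not deformability of the triple. Finally, your involution $\sigma$ (tensor of two orthogonal involutions adjoint to hyperbolic forms) is not the one to which the $\fA_1\times\fA_1\cong\fD_2$ equivalence applies; that equivalence uses the tensor of the canonical \emph{symplectic} involutions on the quaternion factors.

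\medskip
\textbf{What the paper actually does.}
The paper constructs two specific, distinct neutral quaternion algebras $\cQ=\cEnd(\cV)$ and $\cP=\cEnd(\cW)$ on $X$ (the first from a rank $2$ extension pulled back from $E_1$, the second from a $\PGL_2(\FF_2)$--torsor) and verifies by explicit cocycle computation that $(\cQ\otimes\cP)(X)=k$. The quadratic pair on $\cQ\otimes\cP$ is the one produced by the norm functor (\Cref{norm_functor_equiv}), so by \Cref{deformations_through_stack_morphism} deformations of the triple correspond bijectively to pairs of deformations of $\cQ$ and $\cP$; this replaces your unspecified ``encoding'' step with a precise equivalence. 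A delicate argument (\Cref{lem: qpobs}) using simplicity of $\cV,\cW$, the structure of $\cQ,\cP$ as direct sums of rank-two bundles, and the determinant map into $\rH^1(X,\cO_X)$ shows that at least one of $\cQ,\cP$ admits a deformation to $X_{\mu_2}$ obstructed over $X_{\tau_3}$, giving (i). For (ii), the condition $(\cQ\otimes\cP)(X)=k$ is fed into a Serre duality argument (\Cref{cor: quartvan} or \Cref{thm: zero}) to show the map $\rH^2(X,\fpgo)\to\rH^2(X,\fpgl)$ is identically zero, whence the Azumaya obstruction vanishes. Your proposal omits both of these mechanisms.
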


The quadratic pair appearing in the theorem is a canonically constructed pair on a biquaternion algebra $\mathcal{A}\cong \mathcal{Q}'\otimes \mathcal{P}'$ on $X_{\mu_2}$. The quadratic pair comes from the equivalence of stacks shown in \cite{GNR2024TheNormFunctor} which reflects the exceptional isomorphism between the Dynkin types $A_1\times A_1$ and $D_2$. Because the quadratic pair comes from this equivalence, it will deform if and only if both $\cQ'$ and $\cP'$ deform. However, we construct $\cQ'$ and $\cP'$ in such a way to ensure that at least one of $\cQ'$ or $\cP'$ has obstructed deformations. We can do this by exploiting the fact that the Igusa surface $X$ has line bundles with obstructed deformations.

More precisely, we start with an exact sequence
\[
0 \to \cO_X \to \cV \to \cL \to 0
\]
where $\cL$ is a particular line bundle on $X$ and $\cV$ is a rank $2$ vector bundle such that we have an explicit cocycle description for the quaternion algebra $\cQ = \cEnd_{\cO_X}(\cV)$. The challenge is to find an Azumaya algebra $\mathcal{P}$ on $X$ such that $(\mathcal{Q}\otimes \mathcal{P})(X)=k$. We use this condition critically, along with either \Cref{cor: quartvan} or \Cref{thm: zero} below, to ensure that any nontrivial obstruction we create vanishes once the quadratic pair is forgotten. That is to say, we prove (using that $K_X\cong \mathcal{O}_X)$ that this condition implies that the canonical map on the cohomology of Lie algebra sheaves \[
\mathrm{H}^2(X,\fpgo_{(\cQ\otimes\cP,\sigma,f)}) \to \mathrm{H}^2(X,\fpgl_{\cQ\otimes\cP}),
\]
is identically zero. Although we believe that this condition is most likely generically satisfied, most of the examples that we could produce did not have this condition. The algebra $\mathcal{P}$ with these properties which we use is constructed from a canonical $\mathbf{PGL}_2(\mathbb{F}_2)$-torsor over $X$. Since $\mathbf{GL}_2(\mathbb{F}_2)=\mathbf{PGL}_2(\mathbb{F}_2)$, it is therefore also neutral, so $\mathcal{P}\cong \cEnd(\mathcal{W})$ for some rank $2$ vector bundle $\mathcal{W}$ on $X$ as well.

Next, we argue that at least one of $\mathcal{Q}$ or $\mathcal{P}$ must have a deformation to $X_{\mu_2}$ with obstructed deformations to $X_{\tau_3}$. We do this by showing that simple rank $2$ vector bundles on $X$ are relatively unobstructed compared to their endomorphism algebra sheaves. We then show that collective deformations of the (simple) vector bundles $\mathcal{V}$ and $\mathcal{W}$ underlying $\mathcal{Q}$ and $\mathcal{P}$ have determinant bundles which admit deformations in independent directions. We show that at least one of these directions corresponds to a line bundle with obstructed deformations to $X_{\tau_3}$. All together, this allows us to prove \Cref{intro_ii}.

\addsubsection{An Intermediate Obstruction} Along our way to constructing the example in \Cref{intro_ii}, we develop a new cohomological obstruction related to quadratic triples which only occurs in the purely characteristic $2$ realm and is related to the behaviour of Lie algebra sheaves. In general, on any scheme $X$, given an Azumaya algebra with orthogonal involution $(\cA,\sigma)$ we can consider submodules of $\cA$
\begin{align*}
\cSym_{\cA,\sigma} &= \Ker(\Id_\cA -\sigma) & \cSkew_{\cA,\sigma} &= \Ker(\Id_\cA + \sigma)\\
\cAlt_{\cA,\sigma} &= \Img(\Id_\cA -\sigma) & \cSymd_{\cA,\sigma} &= \Img(\Id_\cA + \sigma)
\end{align*}
called the symmetric, skew-symmetric, alternating, and symmetrized elements, respectively. If $\sigma$ is a locally quadratic involution, meaning that $1\in \cSymd_{\cA,\sigma}(X)$, i.e., the unit of $\cA(X)$ is a symmetrized element, then there exist cohomological obstructions introduced in \cite{GNR2023AzumayaObstructions}
\begin{align*}
\Omega(\cA,\sigma) &\in H^1(X,\cSkew_{\cA,\sigma}) \\
\omega(\cA,\sigma) &\in H^1(X,\cSkew_{\cA,\sigma}/\cAlt_{\cA,\sigma})
\end{align*}
called the strong and weak obstructions, respectively. These are defined as the images of $1\in \cSymd_{\cA,\sigma}(X)$ under the boundary maps of the following long exact cohomology sequences (and so the strong obstruction maps to the weak obstruction under the rightmost vertical arrow).
\[
\begin{tikzcd}[column sep = 2ex]
0 \ar[r] & \cSkew_{\cA,\sigma}(X) \ar[r] \ar[d] & \cA(X) \ar[r] \ar[d] & \cSymd_{\cA,\sigma} \ar[d,equals] \ar[r] & H^1(X,\cSkew_{\cA,\sigma}) \ar[d] \\
0 \ar[r] & (\cSkew_{\cA,\sigma}/\cAlt_{\cA,\sigma})(X) \ar[r] & (\cA/\cAlt_{\cA,\sigma})(X) \ar[r] & \cSymd_{\cA,\sigma} \ar[r] & H^1(X,\cSkew_{\cA,\sigma}/\cAlt_{\cA,\sigma})
\end{tikzcd}
\]
The weak obstruction controls whether $(\cA,\sigma)$ can be equipped with a semi-trace $f\colon \cSym_{\cA,\sigma} \to \cO_X$ and the strong obstruction controls whether there exists a linear map $f' \colon \cA \to \cO_X$ such that its restriction to the symmetric elements is a semi-trace. 

Now, if we specialize to the purely characteristic $2$ case, then
\[
\cSym_{\cA,\sigma} = \cSkew_{\cA,\sigma} \text{ and } \cSymd_{\cA,\sigma} = \cAlt_{\cA,\sigma}.
\]
Furthermore, since we are assuming $1\in \cSymd_{\cA,\sigma}(X) = \cAlt_{\cA,\sigma}(X)$, this means that $\cO_X \subseteq \cAlt_{\cA,\sigma} \subseteq \cSkew_{\cA,\sigma}$. Thus, we may add an intermediate row to the diagram above,
\[
\begin{tikzcd}[column sep = 2ex]
0 \ar[r] & \cSkew_{\cA,\sigma}(X) \ar[r] \ar[d] & \cA(X) \ar[r] \ar[d] & \cSymd_{\cA,\sigma} \ar[d,equals] \ar[r] & H^1(X,\cSkew_{\cA,\sigma}) \ar[d] \\
0 \ar[r] & (\cSkew_{\cA,\sigma}/\cO_X)(X) \ar[r] \ar[d] & (\cA/\cO_X)(X) \ar[d] \ar[r] & \cSymd_{\cA,\sigma} \ar[r] \ar[d,equals] & H^1(X,\cSkew_{\cA,\sigma}/\cO_X) \ar[d] \\
0 \ar[r] & (\cSkew_{\cA,\sigma}/\cAlt_{\cA,\sigma})(X) \ar[r] & (\cA/\cAlt_{\cA,\sigma})(X) \ar[r] & \cSymd_{\cA,\sigma} \ar[r] & H^1(X,\cSkew_{\cA,\sigma}/\cAlt_{\cA,\sigma})
\end{tikzcd}
\]
and we call the image of $1$ in $\mathrm{H}^1(X,\cSkew_{\cA,\sigma}/\cO_X)$ the intermediate obstruction, denoted by $\Omega'(\cA,\sigma)$. We show in \Cref{Lie_extensions} that for any semi-trace $f$ making $(\cA,\sigma,f)$ a quadratic triple, we have an exact sequence
\[
0 \to \cAlt_{\cA,\sigma}/\cO_X \to \fpgo_{(\cA,\sigma,f)} \to \cO_X \to 0
\]
where $\fpgo_{(\cA,\sigma,f)}$ is the Lie algebra sheaf of the automorphism group $\PGO_{(\cA,\sigma,f)}$. The intermediate obstruction, or more precisely its vanishing, controls whether such a sequence exists which is split exact (our \Cref{intermediate_meaning}).
\begin{thm}\label{intro_iii}
Let $X$ be a scheme over a field $k$ of characteristic $2$ and let $(\cA,\sigma)$ be an Azumaya algebra with locally quadratic orthogonal involution. 

Then, the intermediate obstruction $\Omega'(\cA,\sigma)$ is zero if and only if there exists a semi-trace $f$ making $(\cA,\sigma,f)$ a quadratic triple and such that sequence
\[
0 \to \cAlt_{\cA,\sigma}/\cO_X \to \fpgo_{(\cA,\sigma,f)} \to \cO_X \to 0
\]
is split exact.
\end{thm}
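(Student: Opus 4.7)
The plan is to translate the splitting question into a statement about cohomology classes and then relate those to the intermediate obstruction. Let $e(f) \in \rH^1(X, \cAlt_{\cA,\sigma}/\cO_X)$ denote the extension class of the sequence
\[
0 \to \cAlt_{\cA,\sigma}/\cO_X \to \fpgo_{(\cA,\sigma,f)} \to \cO_X \to 0,
\]
so this sequence is split exact if and only if $e(f) = 0$. The theorem is then equivalent to the claim that there exists a semi-trace $f$ with $e(f) = 0$ precisely when $\Omega'(\cA,\sigma) = 0$.

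The key lemma I would establish first is that $\Omega'(\cA,\sigma)$ equals the image of $e(f)$ under the natural map $\iota_* \colon \rH^1(X, \cAlt_{\cA,\sigma}/\cO_X) \to \rH^1(X, \cSkew_{\cA,\sigma}/\cO_X)$ induced by the inclusion $\cAlt_{\cA,\sigma} \subseteq \cSkew_{\cA,\sigma}$ (which holds in characteristic $2$), for every semi-trace $f$. To prove this I would compare Yoneda extensions: using the description of $\fpgo_{(\cA,\sigma,f)}$ as a subquotient of $\cA/\cO_X$ from \Cref{Lie_extensions}, the pushout of the Lie algebra sequence along $\cAlt_{\cA,\sigma}/\cO_X \hookrightarrow \cSkew_{\cA,\sigma}/\cO_X$ should coincide with the pullback, along the unit map $\cO_X \hookrightarrow \cSymd_{\cA,\sigma}$ sending $1 \mapsto 1$, of the extension
\[
0 \to \cSkew_{\cA,\sigma}/\cO_X \to \cA/\cO_X \xrightarrow{\Id + \sigma} \cSymd_{\cA,\sigma} \to 0.
\]
By the definition given in the introductory diagram, this pullback represents exactly $\Omega'(\cA,\sigma)$.

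The direction ($\Leftarrow$) then follows immediately: if the Lie algebra sequence splits then $e(f) = 0$, and hence $\Omega'(\cA,\sigma) = \iota_*(e(f)) = 0$. For the converse I would use the long exact cohomology sequence of
\[
0 \to \cAlt_{\cA,\sigma}/\cO_X \to \cSkew_{\cA,\sigma}/\cO_X \to \cSkew_{\cA,\sigma}/\cAlt_{\cA,\sigma} \to 0,
\]
which identifies $\ker(\iota_*)$ with the image of a connecting map $\delta \colon \rH^0(X, \cSkew_{\cA,\sigma}/\cAlt_{\cA,\sigma}) \to \rH^1(X, \cAlt_{\cA,\sigma}/\cO_X)$. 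Assuming $\Omega'(\cA,\sigma) = 0$, for any fixed semi-trace $f$ we obtain $e(f) = \delta(s)$ for some global section $s$ of $\cSkew_{\cA,\sigma}/\cAlt_{\cA,\sigma}$. Since the set of semi-traces is a torsor under $\cHom(\cSkew_{\cA,\sigma}/\cAlt_{\cA,\sigma}, \cO_X)(X)$, the plan is to replace $f$ by a new semi-trace $f'$ that cancels $\delta(s)$, giving $e(f') = 0$.

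The hard part, and main obstacle, will be showing that the assignment $f \mapsto e(f)$ sweeps out the entire fiber of $\iota_*$ above $\Omega'(\cA,\sigma)$, so that every element in the image of $\delta$ can be realized as a torsor shift of $f$. To address this I would compute the difference $e(f') - e(f)$ via an explicit \v{C}ech description: local trivializations of $\cA$ yield compatible local splittings of $\fpgo_{(\cA,\sigma,f)}$ on the opens of a cover, and the \v{C}ech $1$-cocycle measuring the failure of these splittings to glue when $f$ is replaced by $f'$ should be determined by the global section of $\cHom(\cSkew_{\cA,\sigma}/\cAlt_{\cA,\sigma}, \cO_X)$ recording their difference. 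Matching this cocycle against the standard \v{C}ech formula for $\delta$ is the main technical point; once the explicit description of $\fpgo_{(\cA,\sigma,f)}$ from \Cref{Lie_extensions} is in hand, the computation should go through routinely.
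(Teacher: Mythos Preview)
Your approach is correct, but the converse direction is more circuitous than necessary. The paper bypasses your ``hard part'' entirely by reversing the order of operations: rather than fixing an arbitrary semi-trace and then modifying it, one constructs the correct $f$ directly from the vanishing of $\Omega'(\cA,\sigma)$. By definition, $\Omega'(\cA,\sigma)=0$ means that $1\in\cSymd_{\cA,\sigma}(X)$ lifts to some $\bar\ell\in(\cA/\cO_X)(X)$; its image $\ell\in(\cA/\cAlt_{\cA,\sigma})(X)$ satisfies $\ell+\sigma(\ell)=1$ and hence defines a semi-trace $f$. Since \Cref{thm: liesections} puts right-splittings of the Lie sequence for this $f$ in bijection with lifts of $\ell$ to $(\cA/\cO_X)(X)$, the element $\bar\ell$ itself furnishes the desired splitting. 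No torsor argument or \v Cech computation is needed.

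That said, your route does go through, and the ``hard part'' is much easier than you suggest once you use the fiber-product description of $\fpgo_{(\cA,\sigma,f)}$ from \Cref{thm: liesections}: one has $e(f)=\partial(\ell_f)$, where $\ell_f\in(\cA/\cAlt_{\cA,\sigma})(X)$ is the section determining $f$ and $\partial$ is the boundary for $0\to\cAlt_{\cA,\sigma}/\cO_X\to\cA/\cO_X\to\cA/\cAlt_{\cA,\sigma}\to 0$. The sections $\ell_f$ form a coset of $(\cSkew_{\cA,\sigma}/\cAlt_{\cA,\sigma})(X)$, and $\partial$ restricted to that subgroup is exactly your connecting map $\delta$; hence the classes $e(f)$ fill out precisely one fiber of $\iota_*$, with no local computation required. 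One small omission in your argument: you fix a semi-trace $f$ before knowing one exists, but this is harmless since $\Omega'(\cA,\sigma)=0$ forces $\omega(\cA,\sigma)=0$.
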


While \Cref{intro_iii} is not strictly needed for the proof of \Cref{intro_ii}, we show how such results can naturally be obtained for arbitrary even degree Azumaya algebras by some cohomological arguments related to this sequence (see \Cref{thm: zero}).

\addsubsection{Organization} The contents of this paper are organized as follows. We recall definitions and basic properties of our main objects throughout the preliminaries section. Both Azumaya algebras and quadratic triples are addressed in \Cref{azumaya_algebras}. Their automorphism group schemes, which are linear algebraic groups, as well as the associated Lie algebra sheaves are discussed in \Cref{Lie_Algebras}. The required details about the stack equivalence $A_1\times A_1 \cong D_2$ are covered in \Cref{norm_functor}. The setup for what we consider a deformation problem is explained in \Cref{deformations}, as well as background on canonical cohomological obstruction classes and classification of deformations. These can be neatly combined into a tangent-obstruction theory, discussed in \Cref{tangent_obstruction}. We end the preliminaries by reviewing Igusa surfaces and their properties that are necessary for us in \Cref{Igusa_surfaces}.

Our work on the intermediate obstruction in characteristic $2$ is done in \Cref{intermediate_obstruction}. Throughout this section we also discuss how the extension classes of various short exact sequences relate to the strong, intermediate, and weak obstructions, offering a new lens through which to view the two previously defined obstructions.

Relative obstructions are discussed in \Cref{deformations_of_quadratic_triples}.

Finally, in \Cref{explicit_example} we explicitly construct the example promised in \Cref{intro_ii}. The two quaternion algebras $\cQ$ and $\cP$ on an Igusa surface $X$ come from concrete fppf $1$--cocycles. We use computations with these cocycles to verify that $\cQ\otimes \cP$ has the required properties for our example. To see that the quadratic triple structure $(\cA,\sigma,f)$ on $\mathcal{A}\cong \mathcal{Q}'\otimes \mathcal{P}'$ coming from the equivalence $A_1\times A_1 \cong D_2$ does not deform, we transfer the question back to the $A_1\times A_1$ side where one of $\cQ'$ or $\cP'$ does not deform by construction, therefore obstructing the deformations of $(\cA,\sigma,f)$. All of these details are assembled in \Cref{thm: main}.
}

\section{Preliminaries}
\subsection{Azumaya algebras with additional structure}\label{azumaya_algebras}
Throughout this section we fix a scheme $(X,\mathcal{O}_X)$ to be considered as a base.

\subsubsection{Azumaya algebras} Recall that an $\mathcal{O}_X$-algebra $\mathcal{A}$ is an \textit{Azumaya $\mathcal{O}_X$-algebra} if there exists an fppf-cover $\{X_i \to X\}_{i\in I}$ and finite locally free $\cO_{X_i}$-modules $\cM_i$ of everywhere positive rank such that there are isomorphisms $\cA_{X_i}\cong \cEnd_{\cO_{X_i}}(\cM_i)$ for each $i\in I$. If each of the $\mathcal{O}_{X_i}$-modules $\mathcal{M}_i$ are, moreover, free, then we say that such a cover \textit{splits} the Azumaya $\mathcal{O}_X$-algebra $\mathcal{A}$.

Due to \cite[Theorem 3.9]{MR559531}, for any Azumaya $\mathcal{O}_X$-algebra $\mathcal{A}$ there is an \'etale cover of $X$ that splits $\mathcal{A}$. This fact is useful for setting up a deformation theory for Azumaya $\mathcal{O}_X$-algebras equipped with additional structures. However, we frequently use splittings of Azumaya $\mathcal{O}_X$-algebras along fppf-covers which are not \'etale.

\subsubsection{Involutions on Azumaya algebras} Given an Azumaya $\cO_X$-algebra $\cA$, an involution of the first kind on $\mathcal{A}$ is an $\cO_X$-module morphism $\sigma \colon \cA \to \cA$ satisfying \begin{equation} \sigma^2=\Id_{\cA}\quad \mbox{and} \quad \sigma(ab)=\sigma(b)\sigma(a)\end{equation} for all sections $a,b\in \cA(U)$ for any open $U\subset X$. Since throughout this text we restrict ourselves to involutions of the first kind only (as opposed to involutions of the second kind, which need not be $\cO_X$--linear), we will often implicitly assume that an involution is of the first kind without comment.

Let $(\cA,\sigma)$ be an Azumaya $\mathcal{O}_X$-algebra with involution. There exists a sufficiently fine fppf-cover $\{X_i\to X\}_{i\in I}$ such that $\cA_{X_i} \cong \Mat_{n_i}(\cO_{X_i})$ and the involution $\sigma_{X_i}$ is adjoint to a regular bilinear form $b_i \colon \cO_{X_i}^{n_i} \times \cO_{X_i}^{n_i} \to \cO_{X_i}$. The involution $\sigma$ is called:
\begin{enumerate}
    \item \emph{orthogonal} if all $b_i$ are symmetric, i.e., $b_i(x_1,x_2)=b_i(x_2,x_1)$,
    \item \emph{weakly symplectic} if all $b_i$ are skew-symmetric, i.e., $b_i(x_1,x_2)=-b_i(x_1,x_2)$,
    \item and \emph{symplectic} if all $b_i$ are alternating, i.e., $b_i(x,x)=0$.
\end{enumerate}
These notions are independent of the chosen cover and are thus well-defined globally. For an Azumaya algebra $(\cA, \sigma)$ with involution, we have two $\cO_X$-module morphisms $\Id_\cA \pm \sigma \colon \cA \to \cA$ and we define the following notable submodules of $\cA$,
\begin{align}
\begin{split}
\cSym_{\cA,\sigma} &= \Ker(\Id_\cA -\sigma)\\
\cSkew_{\cA,\sigma} &= \Ker(\Id_\cA + \sigma)\\
\cAlt_{\cA,\sigma} &= \Img(\Id_\cA -\sigma)\\
\cSymd_{\cA,\sigma} &= \Img(\Id_\cA + \sigma)
\end{split}
\end{align}
called the submodules of symmetric, skew-symmetric, alternating, and symmetrized elements respectively. For any open $U\subset X$, we can say that
\begin{align}
\begin{split}
\cSym_{\cA,\sigma}(U) &= \{ a \in \cA(U) \mid \sigma(a)=a\}, \text{ and}\\
\cSkew_{\cA,\sigma}(U) &= \{a \in \cA(U) \mid \sigma(a)=-a\}.
\end{split}
\end{align}
However, $\cAlt_{\cA,\sigma}$ and $\cSymd_{\cA,\sigma}$ are image sheaves and in general do not have such convenient descriptions because their construction involves sheafification.

\subsubsection{Quadratic triples}\label{quadratic_triples}
Orthogonal involutions considered on their own can be poorly behaved if $2$ is not invertible over our base scheme $X$, so it is often advantageous to work with \emph{quadratic pairs}. Taking our definition from \cite[2.7.0.30]{CF2015GroupesClassiques} and our terminology from \cite[3.1]{GNR2023AzumayaObstructions}, for an Azumaya $\cO_X$-algebra $\cA$ we call $(\sigma,f)$ a \emph{quadratic pair on $\cA$} or call $(\cA,\sigma,f)$ a \emph{quadratic triple} if:
\begin{enumerate}
    \item $\sigma$ is an orthogonal involution on $\cA$, and
    \item $f\colon \cSym_{\cA,\sigma} \to \cO_X$ is an $\cO_X$-module morphism satisfying $$f(a+\sigma(a))=\Trd_{\cA}(a)$$ for all sections $a\in \cA(U)$ for all open $U\subset X$.
\end{enumerate}
Here $\Trd_\cA \colon \cA \to \cO_X$ is the \emph{reduced trace} of $\cA$, which is the unique $\cO_X$-module morphism fppf locally isomorphic to the usual trace of matrix algebras. In the case that $2$ is invertible over $X$, the second condition above forces $f=\frac{1}{2}\Trd_\cA$. If this is the case, then we may simply ignore $f$ as the theory of quadratic pairs coincides with the usual theory of orthogonal involutions. By \cite[2.7.0.32]{CF2015GroupesClassiques}, all quadratic triples are \'etale locally isomorphic, i.e., they are all twisted forms of one another.

Given an Azumaya algebra $\mathcal{A}$ with an orthogonal involution $\sigma$ on $\mathcal{A}$, it may not be possible to equip $(\mathcal{A},\sigma)$ with a semi-trace $f$ to form a quadratic triple $(\mathcal{A},\sigma,f)$. By \cite[Lemma 3.14]{GNR2023AzumayaObstructions}, one necessary condition for there to exist such an $f$ is that the unit in $\cA(X)$ must be a symmetrized element, i.e., $1\in \cSymd_{\cA,\sigma}(X)$. Orthogonal involutions which have this property are called \emph{locally quadratic} since, by \cite[Lemma 3.15]{GNR2023AzumayaObstructions}, they may at least be equipped with semi-traces locally. 

If $(\cA,\sigma)$ is an Azumaya algebra with locally quadratic involution, then we may describe all possible semi-traces $f$ which make $(\cA,\sigma,f)$ a quadratic triple. First, since $\cAlt_{\cA,\sigma}$ lies in the kernel of the map $1+\sigma \colon \cA \to \cSymd_{\cA,\sigma}$, there is a commutative diagram
\[
\begin{tikzcd}
\cA \ar[r,"1+\sigma"] \ar[d,twoheadrightarrow] & \cSymd_{\cA,\sigma} \ar[d,equals] \\
\cA/\cAlt_{\cA,\sigma} \ar[r,"\xi"] & \cSymd_{\cA,\sigma}
\end{tikzcd}
\]
where $\xi$ is the map induced by $1+\sigma$. Then, \cite[Theorem 3.18]{GNR2023AzumayaObstructions} states that there is a bijection between the set of possible semi-traces $f$ and the set of global sections $\lambda \in (\cA/\cAlt_{\cA,\sigma})(X)$ such that $\xi(\lambda) = 1 \in \cSymd_{\cA,\sigma}(X)$. For such a section $\lambda \in (\cA/\cAlt_{\cA,\sigma})(X)$, there will exist a cover $\{X_i \to X\}_{i \in I}$ over which $\lambda|_{X_i}$ is the image of some section $\ell_i \in \cA(X_i)$ with $\ell_i+\sigma(\ell_i)=1$, and then the semi-trace $f$ associated to $\lambda$ is uniquely defined by the fact that
\[
f|_{X_i}(s) = \Trd_{\cA}(\ell_i \cdot s)
\]
locally for any section $s\in \cSym_{\cA,\sigma}(X_i)$.

\subsection{Group schemes and Lie algebra sheaves}
In the study of the deformation theory of Azumaya algebras, possibly with additional structure, both the automorphism group schemes associated to such objects and their Lie algebra sheaves play an important role. Here we review the definitions of these objects.

\subsubsection{Algebraic groups}
Given any sheaf $\cF$ on $X$, possibly with some additional structure (such as being an $\cO_X$--module, an $\cO_X$--algebra, etc.), the \emph{automorphism sheaf} $\bAut(\cF)$ is the sheaf of groups on $X$ is defined by
\[
\bAut(\cF)(U) = \Aut(\cF|_U),
\]
for $U\subseteq X$ open, i.e., this is the sheaf of internal automorphisms which respect any present additional structure. For Azumaya $\cO_X$--algebras and involutions we obtain the following familiar group sheaves.
\begin{enumerate}
\item The \emph{projective general linear group} of $\cA$ is the group sheaf of $\cO_X$--algebra automorphisms $\PGL_\cA = \bAut(\cA)$. We write $\PGL_n = \PGL_{\Mat_n(\cO_X)}$.
\item If $(\mathcal{A},\sigma,f)$ is a quadratic triple, then the \emph{projective general orthogonal group of $(\mathcal{A},\sigma,f)$}, denoted by $\PGO_{(\cA,\sigma,f)} = \bAut(\cA,\sigma,f)$, is the subgroup sheaf of $\PGL_\cA$ consisting of those automorphisms $\varphi$ such that $\varphi \circ \sigma = \sigma \circ \varphi$ and $f\circ \varphi = f$. 

In the case that the quadratic triple is of the form $(\Mat_{2n}(\cO_X),\sigma_{2n},f_{2n})$, where $\sigma_{2n}$ is the orthogonal involution
\[
\sigma_{2n}(B) = \begin{bmatrix} & & 1 \\ & \iddots & \\ 1 & & \end{bmatrix} B^T \begin{bmatrix} & & 1 \\ & \iddots & \\ 1 & & \end{bmatrix},
\] which has the effect of reflecting a matrix $B \in \Mat_{2n}(\cO_X)$ across the second diagonal, and the semi-trace $f_{2n}$ is given by
\[
f_{2n}\left(\begin{bmatrix} a_1 & \hdots & * & * & \hdots & * \\ \vdots & \ddots & \vdots & \vdots & & \vdots \\ * & \hdots & a_n & * & \hdots & * \\ * & \hdots & * & a_n & \hdots & * \\ \vdots & & \vdots & \vdots & \ddots & \vdots \\ * & \hdots & * & * & \hdots & a_1 \end{bmatrix}\right) = \sum_{i=1}^n a_i,
\] then we write $\PGO_{2n}$ for $\PGO_{(\Mat_{2n}(\cO_X),\sigma_{2n},f_{2n})}$.

\end{enumerate}
By \cite[2.4.4.1]{CF2015GroupesClassiques} and \cite[4.4.0.32]{CF2015GroupesClassiques} (which are stated more generally over the fppf-site), these two group sheaves are represented by group schemes affine over $X$. Further, the representing group schemes are all smooth, and thus also flat, over $X$ as this is true for their associated split forms. We use the same notation to refer to the representing group scheme as the sheaf, thus effectively extending the sheaf to the category $\Sch_X$ via the usual convention that for $T\in \Sch_X$, we set
\[
\PGL_\cA(T) = \Hom_{\Sch_X}(T,\PGL_\cA)
\]
and likewise for the other groups. In general, if $\cF$ is an $\cO_X$--module, $\cO_X$--algebra, etc., and $\bG=\bAut(\cF)$ is represented by a group scheme, then for each scheme map $g\colon T \to X$ there is a natural identification
\[
\bG(T) = \Aut_{\cO_T}(g^*(\cF)).
\]
We denote by $g^*(\bG)$, or $\bG|_T$ when $g$ is clear, the restriction of $\bG$ to $T$, i.e., the $T$--group scheme represented by $\bG\times_X T$. It has the property that for a scheme $T' \in \Sch_T$ with $X$-scheme structure $T' \to T \to X$, we have
\[
g^*(\bG)(T') = \bG(T').
\]
One of the benefits of using quadratic pairs is that we obtain smooth representing group schemes. In characteristic $2$, if $\sigma$ is an orthogonal involution which is not symplectic on an Azumaya algebra $\mathcal{A}$ and, if we simply consider $\bAut(\cA,\sigma)$ without a semi-trace, then, in general, the group sheaf $\bAut(\cA,\sigma)$ is not representable by a smooth group scheme.

\subsubsection{Lie algebra sheaves}\label{Lie_Algebras}
Here we follow \cite[2.8]{CF2015GroupesClassiques}, adapting their approach using the fppf-site to our setting. A similar approach appears in \cite[\S 21.A]{KMRT}.

Denote by $X[\varepsilon]$ the \emph{scheme of dual numbers} over $X$. By definition, $X[\varepsilon]$ is the scheme with the same underlying topological space as $X$ and with structure sheaf $\cO_X[\varepsilon]\cong \cO_X[x]/(x^2)$, i.e., $\varepsilon^2 = 0$. The canonical projection $\cO_X[\varepsilon] \surj \cO_X$ and its section $\cO_X \inj \cO_X[\varepsilon]$ define a closed immersion $i_X \colon X \to X[\varepsilon]$ and a morphism $p_X \colon X[\varepsilon] \to X$ respectively. We consider $X[\varepsilon]\in \Sch_X$ with structure map $p_X$. 

Note that, by construction, we have that $p_{X,*}(\cO_{X[\varepsilon]}) = \cO_X[\varepsilon]$. Also, for each open subscheme $U\subseteq X$, there is a corresponding open subscheme $U[\varepsilon]\subseteq X[\varepsilon]$ with the same topological space as $U$ and with structure sheaf $\cO_U[\varepsilon]$. The maps $i_X$ and $p_X$ restrict naturally to maps we call $i_U$ and $p_U$.

Now let $\bG$ be a sheaf of groups on $X$ represented by a group scheme over $X$. For each $U\subseteq X$ open, we obtain a split exact sequence of groups
\begin{equation}
\begin{tikzcd}
1 \ar[r] & \Ker(\bG(i_U)) \ar[r,hookrightarrow] & \bG(U[\varepsilon]) \ar[r,yshift=0.5ex,"\bG(i_U)"] & \bG(U) \ar[l,yshift=-0.5ex,"\bG(p_U)"] \ar[r] & 1
\end{tikzcd}
\end{equation}
where $\bG(i_U)$ is the restriction map
\[
\bG(U[\varepsilon]) = \Hom_{\Sch_X}(U[\varepsilon],\bG) \to \Hom_{\Sch_X}(U,\bG) = \bG(U)
\]
induced by $i_U$, and likewise for $\bG(p_U) \colon \bG(U) \to \bG(U[\varepsilon])$. The group $\Ker(\bG(i_U))$ has the structure of a Lie algebra over $\cO_X(U)$. According to \cite{CF2015GroupesClassiques}, these are the $U$ points of the Lie algebra $\Lie_\bG$ of $\bG$, i.e., $\Lie_\bG$ is an $\cO_X$--module such that for all open $U\subset X$ we have $\Lie_\bG(U) = \ker(\bG(i_U))$. Alternatively, there is an exact sequence of group sheaves over $X$,
\begin{equation}\label{Lie_exact_sequence}
\begin{tikzcd}
1 \ar[r] & \Lie_\bG \ar[r,hookrightarrow] & p_{X*}(\bG|_{X[\varepsilon]}) \ar[r,yshift=0.5ex,"i'"] & \bG \ar[r] \ar[l,yshift=-0.5ex,"p'"] & 1
\end{tikzcd}
\end{equation}
where the maps $i'$ and $p'$ are induced by the various $\bG(i_U)$ and $\bG(p_U)$.

There is an equivalent definition of the Lie algebra sheaf of $\mathbf{G}$ in terms of the module of differentials of the scheme $\mathbf{G}$ over $X$. Let $p\colon \bG \to X$ be the structure map and $e\colon X \to \bG$ be the unit section. The module of relative differentials of $\bG$ over $X$ is the $\cO_\bG$--module $\Omega_{\bG/X} = \Omega_{\cO_\bG/p^{-1}(\cO_X)}$ (see \cite[\href{https://stacks.math.columbia.edu/tag/08RT}{Tag 08RT(2)}]{Stacks} for the definition). In \cite{CF2015GroupesClassiques}, Calm{\`e}s and Fasel use the notation $\omega_{\bG/X}^1 = e^*(\Omega_{\bG/X})$ for this module pulled back to $X$ along $e$. They then state in \cite[2.8.0.38]{CF2015GroupesClassiques} that the Lie algebra of $\bG$ is $\bW\big((\omega_{\bG/X}^1)^\vee\big)$, using the $\bW$ functor of \cite[Exp I, 4.6.1]{SGA3} to extend to the fppf-site. Working on a scheme as we do, we may simply write
\begin{equation}\label{Lie_algebra_dual}
\Lie_\bG = (e^*(\Omega_{\bG/X})\big)^\vee.
\end{equation}
So, for $U\subseteq X$ open, we have that
\begin{align*}
\Lie_\bG(U) \cong \Hom_{\cO_U}(e^*(\Omega_{\bG/X})|_U,\cO_U).
\end{align*}

We now give concrete realizations of the Lie algebra sheaves for the group schemes $\PGL_\cA$ and $\PGO_{(\cA,\sigma,f)}$. These are given over fields in \cite[23.A, 23.B]{KMRT} and analogous descriptions hold here. If $\cA$ is an Azumaya $\cO_X$--algebra, then $\PGL_\cA|_{X[\varepsilon]}$ is the group of $\cO_{X[\varepsilon]}$--algebra automorphisms of $\cA[\varepsilon]=\cA\otimes_{\cO_X} \cO_X[\varepsilon]$. The Lie algebra sheaf of $\PGL_\cA$ is then
\begin{align*}
\fpgl_\cA \cong \cA/\cO_X
\end{align*}
and it fits into \eqref{Lie_exact_sequence} over some $U\subseteq X$ open as in the following diagram.
\[
\begin{tikzcd}
0 \arrow{r} & \fpgl_\cA(U) \arrow[hookrightarrow]{r}\arrow["\sim" labl]{d} & p_{X*}(\PGL_\cA|_{X[\varepsilon]})(U) \arrow{r}\arrow[equals]{d} & \PGL_\cA(U) \arrow{r} & 1 \\
 &(\cA/\cO_X)(U)\arrow[hookrightarrow]{r} &  \Aut_{\cO_{U[\varepsilon]}}(\cA|_U[\varepsilon]) & & \\[-1.8 em]
 & \overline{a} \arrow[mapsto]{r} & \Inn(1 + \varepsilon a) & & 
\end{tikzcd}\]
Here we view $\fpgl_\cA$ additively and the other groups multiplicatively. The Lie bracket is given by the commutator $[\overline{a},\overline{b}]=\overline{ab-ba}$. Since $\sigma(\cO_X)=\cO_X$, the involution descends to an involution $\overline{\sigma}$ of $\fpgl_\cA$, i.e., an order two $\cO_X$--module isomorphism which satisfies $\overline{\sigma}([\overline{a},\overline{b}])=[\overline{\sigma}(\overline{b}),\overline{\sigma}(\overline{a})]$.
\begin{rem}\label{lem: liepgl}
Alternatively, since $\cA/\cO_X \cong \cDer_{\cO_X}(\cA,\cA)$ by the map $x\mapsto \ad(x)=(a\mapsto xa-ax)$ sending a section $x$ of $\cA/\cO_X$ to the associated adjoint derivation, one may view the Lie algebra sheaf $\fpgl_\cA$ in terms of derivations. In this case, the map $\cDer_{\cO_X}(\cA,\cA)(U) \to \Aut_{\cO_{U[\varepsilon]}}(\cA|_U[\varepsilon])$ sends a derivation $\Delta$ to the map
\[
a+\varepsilon b \mapsto a+\varepsilon b + \varepsilon\Delta(a).
\]
\end{rem}

If $(\cA,\sigma,f)$ is now a quadratic triple, then the Lie algebra sheaf of $\PGO_{(\cA,\sigma,f)}$ is the preimage of the subgroup sheaf $p_{X*}(\PGO_{(\cA,\sigma,f)}|_{X[\varepsilon]}) \subset p_{X*}(\PGL_\cA|_{X[\varepsilon]})$ under the map $\fpgl_\cA \to p_{X*}(\PGL_\cA|_{X[\varepsilon]})$ given above. Namely,
\[
\fpgo_{(\cA,\sigma,f)}(U) = \{x \in \fpgl_\cA(U) \mid x+\overline{\sigma}(x) = 0,\; f\circ \ad(x)|_{\cSym_{(\cA,\sigma)}} = 0 \}
\]
for $U\subseteq X$ open.

\subsection{$\mathfrak{A}_1\times \mathfrak{A}_1$, $\mathfrak{D}_2$, and the norm functor}\label{norm_functor}
Here we review the result \cite[Theorem 5.13]{GNR2024TheNormFunctor}, which determines that there is an equivalence of stacks between the stack of quaternion algebras over a degree $2$ \'etale cover of schemes and the stack of degree $4$ quadratic triples. We have made some slight modifications to the notation since, in \cite{GNR2024TheNormFunctor}, the authors work with sheaves on the fppf-site. In this section, all schemes are considered over a fixed base field $k$.

First, the stack of quaternion algebras over degree $2$ \'etale covers, denoted $\fA_1^2$ or $\fA_1\times\fA_1$ since it is related to algebraic groups of Dynkin type $A_1\times A_1$, has:
\begin{enumerate}
    \item objects which are pairs $(T'\to T,\cQ)$ where $T'\to T$ is a degree $2$ \'etale cover of $k$--schemes and $\cQ$ is a degree $2$ (i.e., quaternion) Azumaya $\cO_{T'}$--algebra on $T'$,
    \item morphisms which are triples $(f,g,\varphi) \colon (T_1' \to T_1,\cQ_1) \to (T_2' \to T_2,\cQ_2)$ where $f\colon T_1' \to T_2'$ and $g\colon T_1 \to T_2$ are $k$--scheme morphisms making
\[
\begin{tikzcd}
T_1' \ar[d] \ar[r,"f"] & T_2' \ar[d] \\
T_1 \ar[r,"g"] & T_2
\end{tikzcd}
\]
a Cartesian diagram of schemes (so that $T_1'$ shares the universal property of $T_1\times_{T_2} T_2'$) and where $\varphi \colon \cQ_1 \iso f^*(\cQ_2)$ is an isomorphism of $\cO_{T_1'}$--Azumaya algebras, and
    \item structure functor $p\colon \fA_1^2 \to \Sch_k$ defined by setting $p(T'\to T,\cQ) = T$ and $p(f,g,\varphi)=g$.
\end{enumerate}
Here by \emph{a degree $2$ \'etale cover}, we mean the same as in \cite[2.5.2]{CF2015GroupesClassiques}: a $k$-morphism $f\colon T' \to T$ is an \'etale cover of degree $n$ if $f$ is an affine morphism and the sheaf $f_*(\cO_{T'})$ is \'etale locally isomorphic to $\cO_T^n$ as $\cO_T$--algebras (this description also follows from \cite[\href{https://stacks.math.columbia.edu/tag/04HN}{Tag 04HN}]{Stacks}). Pullbacks in $\fA_1^2$ can be taken as follows. For an arbitrary morphism $g\colon X\to T$ in $\Sch_k$ and an object $(T'\to T,\cQ) \in \fA_1^2(T)$ in the fiber over $T$, we get the fiber product diagram
\[
\begin{tikzcd}
X\times_T T' \ar[r,"g'"] \ar[d] & T' \ar[d] \\
X \ar[r,"g"] & T,
\end{tikzcd}
\]
where $X\times_T T' \to X$ is also a degree $2$ \'etale cover since such maps are stable under base change. The morphism $(g',g,\Id)\colon (X\times_T T' \to X,g'^*(\cQ)) \to (T' \to T,\cQ)$ is cartesian, and thus $(X\times_T T' \to X,g'^*(\cQ))$ is a pullback of $(T'\to T,\cQ)$ to $X$.

Second, the stack of degree $4$ quadratic triples, denoted $\fD_2$ because of its relation to groups of Dynkin type $D_2$, has
\begin{enumerate}
    \item objects which are pairs $(T,(\cA,\sigma,f))$ of a $k$-scheme $T$ and a quadratic triple $(\cA,\sigma,f)$ on $T$ such that the $\cO_T$--Azumaya algebra $\cA$ has degree $4$,
    \item morphisms which are pairs $(g,\varphi)\colon (T_1,(\cA_1,\sigma_1,f_1)) \to (T_2,(\cA_2,\sigma_2,f_2))$ of a $k$-scheme morphism $g\colon T_1 \to T_2$ and an isomorphism  $\varphi \colon (\cA_1,\sigma_1,f_1)\iso g^*(\cA_2,\sigma_2,f_2)$ of quadratic triples on $T_1$,
    \item and structure functor $p\colon \fD_2 \to \Sch_k$ defined by setting $p(T,(\cA,\sigma,f))=T$ and $p(g,\varphi) = g$.
\end{enumerate}

\begin{thm}[{\cite[5.13]{GNR2024TheNormFunctor}}]\label{norm_functor_equiv}
There is an equivalence of stacks
\begin{align*}
N \colon \fA_1^2 &\to \fD_2 \\
(T'\to T,\cQ) &\mapsto (T,(N_{T'/T}(\cQ),\sigma_N,f_N))
\end{align*}
induced by norm functors $N_{T'/T}$ associated to the finite \'etale extensions $T'/T$.
\end{thm}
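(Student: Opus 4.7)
The plan is to establish the stack equivalence by descent theory, reducing it to a comparison of the automorphism group schemes of split objects on both sides. I would first verify that $N$ is well-defined on objects: the norm $N_{T'/T}(\cQ)$ of a degree $2$ Azumaya algebra along a degree $2$ \'etale cover has degree $2^2 = 4$, so it is a candidate for an object of $\fD_2(T)$. The involution $\sigma_N$ and semi-trace $f_N$ must then be constructed canonically from the norm data. I would do this by \'etale descent: over an \'etale cover $S \to T$ trivializing $T'\to T$, the base change $T'_S \to S$ becomes $S\sqcup S$, the norm becomes the external tensor product of the two factors of $\cQ$, and the natural swap combined with the reduced trace of the split factor provides $(\sigma_N, f_N)$ explicitly. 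Checking descent then amounts to verifying that these local constructions are equivariant for the $\ZZ/2\ZZ$-action permuting the two sheets of $T'/T$.

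To check that $N$ is an equivalence of stacks, I would use that both $\fA_1^2$ and $\fD_2$ are \'etale-locally trivial: every object of $\fD_2$ is by definition an \'etale twisted form of $(\Mat_4(\cO_T), \sigma_4, f_4)$, while every object of $\fA_1^2$ is \'etale locally isomorphic to the split object $(T\sqcup T \to T,\; \Mat_2(\cO_T)\times \Mat_2(\cO_T))$ (since both degree $2$ \'etale covers and split quaternion algebras are \'etale-locally trivial). For such gerbes, an equivalence of stacks is equivalent to: (i) $N$ takes a split object to a split object (up to isomorphism), and (ii) the induced morphism on automorphism group schemes is an isomorphism. Point (i) is a direct calculation: applying the explicit local formula from the previous paragraph to $\cQ = \Mat_2(\cO_{T\sqcup T})$ yields $\Mat_4(\cO_T)$ equipped with the hyperbolic involution $\sigma_4$ and its canonical semi-trace $f_4$, realizing the split $\fD_2$-object.

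The central computation is step (ii), and this is where I expect the main obstacle. The automorphism group scheme of the split object of $\fA_1^2$ over $T$ is the wreath-type extension $(\PGL_2 \times \PGL_2)\rtimes \ZZ/2\ZZ$, where the $\ZZ/2\ZZ$ comes from $\bAut(T\sqcup T/T)$ and swaps the two $\PGL_2$-factors; the automorphism group of the split object of $\fD_2$ is $\PGO_4$. The claim that $N$ induces an isomorphism between these two group schemes is a functorial, characteristic-free incarnation of the exceptional isomorphism $\fA_1\times \fA_1 \cong \fD_2$. In characteristic $\neq 2$ this is classical and can be read off from Lie algebras or from the even Clifford algebra. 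In characteristic $2$, however, one must argue directly: given a pair of quaternion automorphisms and a swap, one writes down the induced $\cO_T$-algebra automorphism of $N_{T'/T}(\cQ)\cong \Mat_4(\cO_T)$ and verifies that it preserves both $\sigma_4$ and, crucially, $f_4$ — the latter being the delicate point, since the semi-trace is what rigidifies the theory in characteristic $2$. Conversely, an element of $\PGO_4$ must be shown to come from such data, which can be done by analyzing the action on the two rulings of the associated split quadric or on the \'etale $\ZZ/2\ZZ$-quotient appearing in the center. Once this isomorphism of group schemes is established, the equivalence of gerbes follows formally from \'etale (or fppf) descent, completing the proof of \Cref{norm_functor_equiv}.
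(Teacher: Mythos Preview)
The paper does not contain a proof of this statement: \Cref{norm_functor_equiv} is quoted verbatim from \cite[5.13]{GNR2024TheNormFunctor} and used as a black box, so there is no ``paper's own proof'' to compare your proposal against. Your outline---reducing the equivalence of gerbes to an isomorphism of automorphism group schemes $(\PGL_2\times\PGL_2)\rtimes\ZZ/2\ZZ\cong\PGO_4$ over the split object---is a standard and reasonable strategy, and is in fact close in spirit to how \cite{GNR2024TheNormFunctor} proceeds, but verifying that would require consulting that reference rather than the present paper.
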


We now review the explicit description of $(N_{T'/T}(\cQ),\sigma_N,f_N)$ occuring above in the case when the \'etale extension $T'\to T$ is the split \'etale extension, i.e.\ $T\sqcup T \to T$. In this case, a quaternion $\cO_{T\sqcup T}$--algebra is the data of two quaternion $\cO_T$--algebras $\cQ_1$ and $\cQ_2$. Precisely, an open subset of $T\sqcup T$ is of the form $U\sqcup V$ for $U,V\subset T$ open and
\[
\cQ(U\sqcup V) = \cQ_1(U)\times \cQ_2(V).
\]
By \cite[3.11]{GNR2024TheNormFunctor}, the norm of such an algebra is simply the $\cO_T$--algebra
\[
N_{(T\sqcup T)/T}(\cQ) = \cQ_1\otimes_{\cO_T} \cQ_2,
\]
which is clearly degree $4$ since each $\cQ_i$ is degree $2$. To describe the involution $\sigma_N$, we first recall from \cite[Example 2]{MR3908769} the fact
that every quaternion algebra $\cQ$ has a canonical (unique up to isomorphism) symplectic involution defined by
\begin{equation}
\psi \colon \cQ \to \cQ, \quad \quad 
q \mapsto \Trd_\cQ(q)\cdot 1 - q.\end{equation}
Thus, the quaternion algebras $\cQ_i$ have their canonical symplectic involutions $\psi_i$ and the tensor product 
$\sigma_N = \psi_1 \otimes \psi_2$ of these involutions is an orthogonal involution on $\cQ_1\otimes_{\cO_T}\cQ_2$. Finally, the semi-trace $f_N$ is described by \cite[Proposition 4.6]{GNR2023AzumayaObstructions} to be the unique semi-trace $f_N \colon \cSym_{\cQ_1\otimes \cQ_2,\psi_1\otimes\psi_2}\rightarrow \mathcal{O}_T$ such that $f_N(s_1\otimes s_2) = 0$ for all sections $s_i \in \cSkew_{\cQ_i,\psi_i}(U)$ for all open $U\subset X$.

\subsection{Deformations and obstructions}\label{deformations}
Throughout this text we will consider a number of different ``deformation situations". The basic set-up is that we will have an object, a scheme, or a morphism of schemes and we will ask when such objects admit infinitesimal extensions. Such extensions will exist, for example, whenever our object is genuinely defined as the fiber of some family.

\subsubsection{Relative deformations}\label{relative_deformations}
Throughout this section we consider two Artinian local $k$-algebras $(C,\mathfrak{m}_C)$ and $(C',\mathfrak{m}_{C'})$ both having residue field $k$, i.e.\ $C/\mathfrak{m}_C\cong k$ and $C'/\mathfrak{m}_{C'}\cong k$. We also fix a surjective $k$-algebra homomorphism $C'\rightarrow C$ with kernel an ideal $J\subset C'$.

We are going to work in the following setting. We assume we are given
\begin{enumerate}
\item a scheme $X$ over $k$,
\item a scheme $\dX$ flat over $C$ together with a closed immersion $i:X\rightarrow \dX$ inducing an isomorphism $X\cong \dX\times_C k$, and
\item a scheme $\dX'$ flat over $C'$ together with a closed immersion $\ti:\dX\rightarrow \dX'$ inducing an isomorphism $\dX\cong \dX'\times_{C'} C$.
\end{enumerate}
In a diagram:
\[
\begin{tikzcd}
\dX \ar[rr,hook,"\ti"] \ar[dd] & & \dX' \ar[dd] \\
 & X \ar[ul,hook',swap,"i"] \ar[ur,hook,"i_0"] & \\[-15pt]
\Spec(C) \ar[rr] & & \Spec(C') \\
 & \Spec(k) \ar[ul] \ar[ur] \ar[from=uu,crossing over]&
\end{tikzcd}
\]
where the vertical square faces are fiber product diagrams and we set $i_0 = \ti \circ i$.

\begin{defn}\label{defn: defsheaves}
Let $\mathcal{F}$ be a coherent sheaf on $\dX$ flat over $C$. A deformation of $\mathcal{F}$ to $\dX'$ is a coherent sheaf $\mathcal{F}'$ on $\dX'$ flat over $C'$ together with an isomorphism $g\colon \ti^*\mathcal{F}'\iso \mathcal{F}$.

If $\mathcal{F}$ is an invertible sheaf (or a finite locally free sheaf), then we say that $\mathcal{F}'$ is a deformation of an invertible sheaf (or of a finite locally free sheaf) if $\mathcal{F}'$ has the same property as well.
\end{defn}

\begin{defn}\label{defn: defaz}
Let $\mathcal{A}$ be an Azumaya algebra on $\dX$. A deformation of $\mathcal{A}$ to $\dX'$ is an Azumaya $\mathcal{O}_{\dX'}$-algebra $\mathcal{A}'$ together with an isomorphism $g \colon \ti^*\mathcal{A}'\iso \mathcal{A}$ of Azumaya $\mathcal{O}_{\dX}$-algebras.
\end{defn}

\begin{defn}\label{defn: definvs}
Let $\mathcal{A}$ be an Azumaya algebra on $\dX$ and suppose that $\sigma$ is an involution on $\mathcal{A}$. A deformation of the Azumaya algebra with involution $(\mathcal{A},\sigma)$ to $\dX'$ is a pair $((\mathcal{A}',g),\sigma')$ such that:
\begin{enumerate}
\item $(\mathcal{A}',g:\ti^*\mathcal{A}'\iso \mathcal{A})$ is a deformation of $\mathcal{A}$ to $\dX'$ as an Azumaya algebra,
\item $\sigma'$ is an involution on $\mathcal{A}'$ making the diagram below
\begin{equation*}
\begin{tikzcd}
\ti^*\mathcal{A}'\arrow["g"]{d}\arrow["\ti^*\sigma'"]{r} & \ti^*\mathcal{A}'\arrow["g"]{d}\\ \mathcal{A}\arrow["\sigma"]{r} & \mathcal{A}
\end{tikzcd} 
\end{equation*} a commutative square. In particular, $\ti^*\sigma'=g^{-1}\circ \sigma \circ g$.
\end{enumerate}
\end{defn}

\begin{defn}\label{defn: defpairs}
 Again, let $\mathcal{A}$ be an Azumaya algebra on $\dX$. Let $\sigma$ be an involution on $\mathcal{A}$ and suppose also that $\sigma$ is part of a quadratic pair $(\sigma,f)$ on $\mathcal{A}$. We say that a triple $((\mathcal{A}',g),\sigma',f')$ is a deformation to $\dX'$ of the Azumaya algebra with quadratic pair $(\mathcal{A},\sigma,f)$ if:
 \begin{enumerate}
 \item the pair $((\mathcal{A}',g),\sigma')$ is a deformation of the Azumaya algebra with involution $(\mathcal{A},\sigma)$ and
 \item $f':\cSym_{\mathcal{A}',\sigma'}\rightarrow \mathcal{O}_{\dX'}$ is a semi-trace such that the associated diagram
\begin{equation*}
\begin{tikzcd}
\ti^*\cSym_{\mathcal{A}',\sigma'}\arrow["\ti^*f'"]{r}\arrow[d,"g"] & \ti^*\mathcal{O}_{\dX'}\ar[d,equals] \\
\cSym_{\mathcal{A},\sigma}\arrow["f"]{r} & \mathcal{O}_\dX
\end{tikzcd}
\end{equation*} is commutative, where $g$ restricts to a map between the respective symmetric elements since $\ti^*\sigma'=g^{-1}\circ \sigma \circ g$.
\end{enumerate}
\end{defn}

Most of the time, we will be interested in deformations of sheaves with additional structures such as those listed in \Crefrange{defn: defsheaves}{defn: defpairs}. Occasionally though, we will also need a reference for deformations of certain types of morphisms of schemes. For this, let $Y$ be any scheme flat over $C$.
\begin{defn}\label{defn: defetmap}
Suppose that we are given an arbitrary $C$-morphism $f:Y\rightarrow \dX$. We say that a pair $((Y',j:Y\rightarrow Y'),f':Y'\rightarrow \dX')$ is a deformation of the morphism $f$ with target $\dX'$ if:
\begin{enumerate}
\item $(Y',j)$ is a deformation of $Y$, i.e.\ $Y'$ is a flat $C'$-scheme and $j$ is a closed immersion inducing an isomorphism $Y\cong Y'\times_{C'} C$;
\item $f':Y'\rightarrow \dX'$ is a morphism giving an equality $\ti\circ f'=f\circ j$.
\end{enumerate}
\end{defn}

\subsubsection{Infinitesimal automorphisms}
Here, we work with a deformation setup as above. In particular, since the surjection $C' \surj C$ is defined by the ideal $J\subset C'$, the closed embedding $\dX \inj \dX'$ is defined by the quasi-coherent ideal $\cJ = \cO_{\dX'}\otimes_{C'} J$. Throughout this subsection, we make the additional assumption that $J\cdot \mathfrak{m}_{C'}=0$. In particular, this implies that $J^2=0$ and thus also $\cJ^2 = 0$.

\begin{rem}\label{rem: smallext}
Fix a field $k$. Let $(C',\mathfrak{m}_{C'})$ and $(C,\mathfrak{m}_C)$ be two local Artinian $k$-algebras both with residue field $k$. We say that a surjection $C'\twoheadrightarrow C$ of $k$-algebras with kernel $J\subset C'$ is a \textit{small extension} if $J\cdot \mathfrak{m}_{C'}=0$. This agrees with the definition of a small extension given in \cite[Definition 6.1.9]{MR2223408}.

There is another definition of a small extension, that Hartshorne gives in \cite[\S 16]{MR2583634}, which requires $\dim_k J=1$.

Any surjection of local Artinian $k$-algebras with residue field $k$ can be factored into a sequence of small extensions by Nakayama's lemma. Further, any small extension of local Artinian $k$-algebras $(J\subset C', C)$, as above, can be factored into a sequence of small extensions with $1$-dimensional kernel by choosing a basis for $J$. So, there is not much loss of generality choosing between the two definitions.
\end{rem}

Note that, since $\dX \to \dX'$ is an infinitesimal thickening, both schemes have the same underlying topological space. Thus, for each open subscheme $U \subseteq \dX$, there is a corresponding open subscheme $U'\subseteq \dX'$ and an infinitesimal thickening $i_U \colon U \inj U'$ defined by the ideal $\cJ|_{U'} \subset \cO_{U'}$.

We assume we have a sheaf $\dF'$ on $\dX'$ such that $\dG' = \bAut(\dF')$ is represented by a group scheme over $\dX'$. For example, $\dF'$ may be a finite locally free $\cO_{\dX'}$--module, an Azumaya $\cO_{\dX'}$-algebra, or an Azumaya $\cO_{\dX'}$--algebra with additional structure. We set $\dF = \ti^*(\dF')$ and $\cF = i^*(\ti^*(\dF'))$ and denote their automorphism groups $\dG$ and $\bG$ respectively. Thus, $\dF'$ is a deformation of $\dF$. Note that for $U\subseteq \dX$ open, we have that $\dG'(U) = \dG(U)$, as well as similar relations for open sets in $X$.

Since the pullbacks we consider are functorial, for each open subscheme $U \subseteq \dX$ and automorphism $\varphi \colon \dF'|_{U'} \iso \dF'|_{U'}$, there is a pullback of this morphism
\[
\ti_U^*(\varphi) \colon \ti_U^*(\dF'|_U) \iso \ti_U^*(\dF'|_U)
\]
which defines an automorphism of $\dF|_U$. Thus, we obtain a restriction map \[\dG'(U') \to \dG'(U) = \dG(U).\] Alternatively, this restriction map is simply the morphism $\dG'(\ti_U)$ induced by the closed embedding $\ti_U \colon U \to U'$. An \emph{infinitesimal automorphism of $\dF'$ over $U'$} is an element in the kernel of this restriction map. We also have a restriction morphism of group sheaves on $\dX'$,
\begin{equation}\label{eq: restriction_morphism}
\res \colon \dG' \to \ti_*(\ti^*(\dG'))
\end{equation}
where over any $U' \subseteq \dX'$, we identify
\[
\ti_*(\ti^*(\dG'))(U') = \ti^*(\dG')(U) = \dG'(U)
\]
and $\res(U) \colon \dG'(U') \to \dG'(U)$ is the restriction map $\dG'(\ti_U)$. We wish to describe the kernel of this restriction morphism, which will be a group sheaf of infinitesimal automorphisms.

Let $\cR$ be a commutative $\cO_X$--algebra and let $\cM$ be an $\cR$--module. Consider the sheaf $\cDer_{\cO_X}(\cR,\cM)$ on $X$ defined over $U\subseteq X$ by
\[
\cDer_{\cO_X}(\cR,\cM)(U) = \Der_{\cO_U}(\cR|_U,\cM|_U)
\]
where a derivation $d \colon \cR|_U \to \cM|_U$ is an $\cO_U$--module morphism satisfying the Liebniz rule $d(ab)=ad(b)+d(a)b$ for appropriate sections $a,b$ of  $\cR|_U$. The sheaf $\cDer_{\cO_X}(\cR,\cM)$ is an $\cO_X$--module.

\begin{prop}\label{infinitesimal_autos_general}
Let $\ti \colon \dX \inj \dX'$ be a closed immersion defined by a square-zero quasi-coherent ideal $\cJ \subset \cO_{\dX'}$. Let $\dG'$ be a group scheme over $\dX'$. Let $p \colon \dG' \to \dX'$ and $e\colon \dX' \to \dG'$ be the structure morphism and identity section of $\dG'$, respectively. Consider $e^{-1}(\cO_{\dG'})$ as an $\cO_{\dX'}$--algebra via the map $e^{-1}(p^\sharp)\colon \cO_{\dX'} \to e^{-1}(\cO_{\dG'})$ and consider $\cJ$ as an $e^{-1}(\cO_{\dG'})$--module via $e^\sharp \colon e^{-1}(\cO_{\dG'}) \to \cO_{\dX'}$. 

Then, the kernel of the restriction map $\dG'(i) \colon \dG'(\dX') \to \dG'(\dX)$ is isomorphic to the group of derivations $\Der_{\cO_{\dX'}}(e^{-1}(\cO_{\dG'}),\cJ)$
\end{prop}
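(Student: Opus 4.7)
The plan is to build an explicit bijection using the standard tangent-vector-at-the-identity construction. Any $\varphi \in \ker(\dG'(\ti))$ is a section $\varphi \colon \dX' \to \dG'$ of $p$ satisfying $\varphi \circ \ti = e \circ \ti$. Because $\ti$ is a closed immersion defined by the nilpotent ideal $\cJ$, the underlying map $|\ti|$ is a homeomorphism, and so the condition forces $|\varphi| = |e|$. Consequently, such a $\varphi$ is determined by its sheaf map $\varphi^\sharp \colon e^{-1}(\cO_{\dG'}) \to \cO_{\dX'}$, and the restriction condition is equivalent to $\varphi^\sharp - e^\sharp$ having image in $\cJ$.

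I would then check that $\varphi \mapsto D_\varphi := \varphi^\sharp - e^\sharp$ lands in $\Der_{\cO_{\dX'}}(e^{-1}(\cO_{\dG'}), \cJ)$. Expanding the multiplicativity of $\varphi^\sharp$ and $e^\sharp$ gives
\[
D_\varphi(ab) = e^\sharp(a)\, D_\varphi(b) + D_\varphi(a)\, e^\sharp(b) + D_\varphi(a)\, D_\varphi(b),
\]
and $\cJ^2 = 0$ annihilates the last term, yielding the Leibniz rule for $D_\varphi$ with values in the $e^{-1}(\cO_{\dG'})$-module $\cJ$ (the action being via $e^\sharp$, as stipulated in the proposition). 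The $\cO_{\dX'}$-linearity of $D_\varphi$ is immediate from the fact that both $\varphi$ and $e$ are sections of $p$: both $\varphi^\sharp$ and $e^\sharp$ are left inverses of $e^{-1}(p^\sharp)$, so $D_\varphi$ vanishes on the image of $e^{-1}(p^\sharp)$. Conversely, given any derivation $D$, one sets $\varphi^\sharp := e^\sharp + D$; multiplicativity again follows from $\cJ^2 = 0$, and the $\cO_{\dX'}$-linearity of $D$ ensures $\varphi^\sharp \circ e^{-1}(p^\sharp) = \Id$, so the induced $\varphi$ is a section of $p$ that restricts to $e$ along $\ti$ by construction. These assignments are visibly mutual inverses.

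The remaining step is to verify the bijection is a group homomorphism. For $\varphi_1, \varphi_2 \in \ker(\dG'(\ti))$, the product $\varphi_1 \cdot \varphi_2$ corresponds to $(\varphi_1^\sharp \otimes \varphi_2^\sharp) \circ m^\sharp$, where $m \colon \dG' \times_{\dX'} \dG' \to \dG'$ is the group law. I would invoke the standard Hopf-algebraic identity $m^\sharp(x) \equiv x \otimes 1 + 1 \otimes x \pmod{I \otimes I}$ for $x$ in the augmentation ideal $I := \ker(e^\sharp)$; then the facts $D_i(I) \subseteq \cJ$ and $\cJ^2 = 0$ cause the $I \otimes I$ contribution to drop out after applying $\varphi_1^\sharp \otimes \varphi_2^\sharp$, yielding $(\varphi_1 \cdot \varphi_2)^\sharp = e^\sharp + D_{\varphi_1} + D_{\varphi_2}$. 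This matches the additive structure on derivations. The main obstacle is keeping the comultiplication bookkeeping clean, and recording that the Hopf-primitive decomposition is available in this relative, sheaf-theoretic setting; everything else is routine manipulation in the square-zero regime.
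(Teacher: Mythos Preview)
Your set-theoretic bijection is essentially identical to the paper's: both identify $\varphi\in\Ker(\dG'(\ti))$ with the derivation $\varphi^\sharp-e^\sharp$ (the paper uses $e^\sharp-\psi$, a harmless sign), and both verify the Leibniz rule and $\cO_{\dX'}$--linearity by the same computations using $\cJ^2=0$ and the section condition.

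Where you diverge is in checking that the bijection is a group homomorphism. You appeal to the Hopf-style identity $m^\sharp(x)\equiv x\otimes 1+1\otimes x \pmod{I\otimes I}$ and then kill the $I\otimes I$ contribution with $\cJ^2=0$. This is the standard textbook computation, and it is correct once one localises on $\dX'$ so that the identity section has an affine neighbourhood and the tensor-product description of $\cO_{\dG'\times_{\dX'}\dG'}$ is available; the homomorphism property then globalises because the bijection is compatible with restriction. You correctly flag this bookkeeping as the main obstacle. The paper avoids it entirely: instead of touching $m^\sharp$, it introduces the auxiliary scheme $\dX'[\cJ]$ with structure sheaf $\cO_{\dX'}\oplus\cJ$, identifies $\Ker(\dG'(i_\cJ'\circ i))$ with $\Der\times\Der$, and then uses the morphism $\sigma\colon\dX'\to\dX'[\cJ]$ given by $(x,j)\mapsto x+j$ to transport the $\dG'$--multiplication to addition of derivations via an Eckmann--Hilton argument from \cite[Exp.\ 2, 3.10]{SGA3}. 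Your route is more computational and familiar from Hopf algebras; the paper's route is more functorial and sidesteps exactly the comultiplication issue you anticipated.
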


\begin{proof}
Because $\cJ$ is square-zero, the inclusion $i\colon \dX \to \dX'$ is a homeomorphism on the underlying topological spaces and we can consider $\cO_{\dX} = \cO_{\dX'}/\cJ$. The identity in $\dG'(\dX)$ is the morphism $\dX \xrightarrow{e\circ i} \dG'$, so an element in the kernel of $\dG'(i)$ is a morphism $X'\rightarrow \dG'$ in $\Sch_{\dX'}$, which must be a section of $p$, making the diagram
\[
\begin{tikzcd}
 & \dG' \\
\dX \ar[r,"i"] \ar[ur,"e\circ i"] & \dX' \ar[u,dashed]
\end{tikzcd}
\]
commute. Because $i$ is a homeomophism on the underlying spaces, the topological data of such a morphism is uniquely determined and so we only need to care about the morphisms of sheaves on $\dX'$ making the following diagram commute.
\begin{equation}\label{eq: dashedarrow}
\begin{tikzcd}
 & e^{-1}(\cO_{\dG'}) \ar[d,dashed] \ar[dl,swap,"(e\circ i)^\sharp"] \\
\cO_{\dX'}/\cJ & \cO_{\dX'} \ar[l,twoheadrightarrow,swap,near start,"i^\sharp"]
\end{tikzcd}
\end{equation}
Of course, $e^\sharp \colon e^{-1}(\cO_{\dG'}) \to \cO_{\dX'}$ is one possible such morphism. Let $\psi \colon e^{-1}(\cO_{\dG'}) \to \cO_{\dX'}$ be another such morphism. Since $\psi$ must define a section of $p$, we have that $\psi \circ e^{-1}(p^\sharp) = \Id_{\cO_{\dX'}}$. Letting $x$ and $y$ be sections of $\cO_{\dX'}$ and $e^{-1}(\cO_{\dG'})$ respectively, and recalling that the module structure is $x\cdot y = e^{-1}(p^\sharp)(x)y$ by definition, we have
\[
\psi(x\cdot y) = \psi(e^{-1}(p^\sharp)(x)y) = x\psi(y), 
\]
which shows that $\psi$ is $\cO_{\dX'}$--linear. 

We claim that $e^\sharp - \psi$ is a section of $\Der_{\cO_{\dX'}}(e^{-1}(\cO_{\dG'}),\cJ)$. It is clear that $e^\sharp - \psi$ is $\cO_{\dX'}$--linear and that $(e^\sharp - \psi)(y)$ is a section of $\cJ$ for all sections $y$ of $e^{-1}(\cO_{\dG'})$. We also note that this means $e^\sharp(y)j = \psi(y)j$ for any $j$ of $\cJ$ since $\cJ$ is square-zero. Now, we compute for sections $y_1,y_2$ of $\cJ$:
\begin{align*}
(e^\sharp-\psi)(y_1y_2) &= e^\sharp(y_1y_2)-\psi(y_1y_2) \\
&= e^\sharp(y_1)e^\sharp(y_2) - \psi(y_1)\psi(y_2) \\
&= e^\sharp(y_1)e^\sharp(y_2) -e^\sharp(y_1)\psi(y_2) + e^\sharp(y_1)\psi(y_2) - \psi(y_1)\psi(y_2) \\
&= e^\sharp(y_1)(e^\sharp-\psi)(y_2) + (e^\sharp-\psi)(y_1)\psi(y_2) \\
&= e^\sharp(y_1)(e^\sharp-\psi)(y_2) + (e^\sharp-\psi)(y_1)e^\sharp(y_2) \\
&= y_1 \cdot (e^\sharp-\psi)(y_2) + (e^\sharp-\psi)(y_1)\cdot y_2
\end{align*}
in the $e^{-1}(\cO_{\dG'})$--module $\cJ$. This show that $e^\sharp-\psi \colon e^{-1}(\cO_{\dG'}) \to \cJ$ is a derivation.

Conversely, let $d$ be a derivation from $\Der_{\cO_{\dX'}}(e^{-1}(\cO_{\dG'}),\cJ)$. If $e^\sharp + d$ is a homomorphism of sheaves of rings, then it is clear that it may take the place of the dashed morphism in \eqref{eq: dashedarrow} above. Hence, we compute
\begin{align*}
(e^\sharp+d)(y_1)(e^\sharp+d)(y_2) &= (e^\sharp(y_1)+d(y_1))(e^\sharp(y_2)+d(y_2)) \\
&= e^\sharp(y_1)e^\sharp(y_2) + e^\sharp(y_1)d(y_2) + d(y_1)e^\sharp(y_2) + d(y_1)d(y_2) \\
&= e^\sharp(y_1y_2) + y_1\cdot d(y_2) + d(y_1)\cdot y_2 + 0 \\
&= e^\sharp(y_1y_2) + d(y_1y_2) \\
&= (e^\sharp+d)(y_1y_2)
\end{align*}
which shows that $e^\sharp+d$ is a ring sheaf homomorphism as required.

So far, we have shown that we have a bijection of sets
\[
\Ker(\dG'(i)) \leftrightarrow \Der_{\cO_{\dX'}}(e^{-1}(\cO_{\dG'}),\cJ).
\]
It remains to show that this is a group isomorphism. In particular, we argue that multiplication in $\dG'(\dX')$ between elements in the kernel of $\dG'(i)$ corresponds to addition of derivations. We note that the following is inspired by the approach in \cite[Expos\'e 2]{SGA3} where they treat the case when $\cJ$ is a direct summand of $\cO_{\dX'}$.

We consider an additional scheme with the same underlying topological space as $|\dX|=|\dX'|$. First, we define a ring structure on the $\cO_{\dX'}$--module $\cO_{\dX'}\oplus \cJ$ by
\[
(x_1,j_1)(x_2,j_2) = (x_1x_2, x_1j_2 + x_2j_1),
\]
i.e., the direct summand $\cJ$ is a second square-zero copy of $\cJ$ in addition to the one which is a submodule $\cJ\subseteq \cO_{\dX'}$. We denote by $\dX'[\cJ]$ the space $|\dX|$ equipped with this sheaf of rings. We have a closed embedding $i_\cJ' \colon \dX' \inj \dX'[\cJ]$ defined by the square-zero ideal $0\oplus\cJ \subset \cO_{\dX'}\oplus \cJ$. Furthermore, this embedding is split by the map $\pi_\cJ'\colon \dX'[\cJ] \to \dX'$ defined by the canonical inclusion $\cO_{\dX'} \inj \cO_{\dX'}\oplus \cJ$. Both $i_\cJ'$ and $\pi_\cJ'$ are homeomorphisms by construction. The identity section in $\dG'(\dX'[\cJ])$ is the map $e\circ \pi_\cJ' \colon \dX'[\cJ] \to \dG'$.

We now consider the diagram
\begin{equation}\label{eq: pyrimad}
\begin{tikzcd}
 & \dG' \ar[d,xshift=0.5ex,"p"] & \\
\dX \ar[ur,"e\circ i"] \ar[r,"i"] & \dX' \ar[r,"i_\cJ'"] \ar[u,xshift=-0.5ex,"e"] & \dX'[\cJ] \ar[ul,swap,dashed,"\varphi"]
\end{tikzcd}
\end{equation}
and seek to identify which dashed morphisms $\varphi$ belong to $\Ker(\dG'(i_\cJ'\circ i))$, i.e., such that $\varphi\circ i_\cJ' \circ i = e\circ i$. Because both horizontal maps along the bottom are homeomorphisms on the underlying topological spaces, given such a $\varphi$ we would have $\varphi^{-1}(\cO_{\dG'}) = e^{-1}(\cO_{\dG'})$ on $|\dX|$. Thus, a suitable morphism $\varphi$ is determined by a map of sheaves of rings $\varphi^\sharp$ making
\begin{equation}\label{eq: pyrimad2}
\begin{tikzcd}
 & e^{-1}(\cO_{\dG'}) \ar[dr,dashed,"\varphi^\sharp"] \ar[dl,swap,"(e\circ i)^\sharp"]& \\
\cO_{\dX'}/\cJ & \cO_{\dX'} \ar[l,swap,"i^\sharp"] & \cO_{\dX'}\oplus \cJ \ar[l,swap,"i_\cJ'^\sharp"]
\end{tikzcd}
\end{equation}
commute. In particular, $i_\cJ'^\sharp\circ \varphi^\sharp$ must belong to $\Ker(\dG'(i)) = \Der_{\cO_{\dX'}}(e^{-1}(\cO_{\dG'}),\cJ)$ identified above. Therefore, we may write
\[
\varphi^\sharp(y) = (\psi(y),f(y))
\]
for $y$ a section of $e^{-1}(\cO_{\dG'})$ where $\psi = e^\sharp + d$ for some $d$ of $\Der_{\cO_{\dX'}}(e^{-1}(\cO_{\dG'}),\cJ)$. We claim that $f$ is also such a derivation. Since $\varphi^\sharp$ is a ring homomorphism, we can compute, for sections $y_1,y_2$ of $e^{-1}(\mathcal{O}_{\dG'})$,
\begin{align*}
\varphi^\sharp(y_1y_2) &= (\psi(y_1),f(y_1))\cdot(\psi(y_2),f(y_2)) \\
&= (\psi(y_1)\psi(y_2),\psi(y_1)f(y_2)+\psi(y_2)f(y_1)) \\
&= (\psi(y_1y_2), e^\sharp(y_1)f(y_2) + e^\sharp(y_2)f(y_1))
\end{align*}
since the $f(y_i)$ are contained in $\cJ$. Thus, $f(y_1y_2) = e^\sharp(y_1)f(y_2) + e^\sharp(y_2)f(y_1)$, which shows that $f\in \Der_{\cO_{\dX'}}(e^{-1}(\cO_{\dG'}),\cJ)$. Conversely, it is clear that any such derivation can occur in the second factor. Therefore, we have a bijection of sets
\[
\Ker(\dG'(i_\cJ'\circ i)) = \Der_{\cO_{\dX'}}(e^{-1}(\cO_{\dG'}),\cJ) \times \Der_{\cO_{\dX'}}(e^{-1}(\cO_{\dG'}),\cJ).
\]

Next, we consider the morphism of schemes $\sigma \colon \dX' \to \dX'[\cJ]$ defined by the map
\begin{align*}
\sigma^\sharp\colon \cO_{\dX'}\oplus \cJ &\mapsto \cO_{\dX'} \\
(x,j) &\mapsto x+j
\end{align*}
which is a homomorphism of sheaves of rings since the copy of $\cJ$ in $\cO_{\dX'}$ is square-zero. The induced morphism $\dG'(\sigma)\colon \dG'(\dX'[\cJ]) \to \dG'(\dX')$ restricts to the kernels discussed above, and takes the form
\begin{align*}
\dG'(\sigma) \colon \Ker(\dG'(i_\cJ'\circ i)) &\to \Ker(\dG'(i)) \\
(e^\sharp + d, f) &\mapsto e^\sharp + (d+f)
\end{align*}
since on the square-zero ideals, $\sigma^\sharp$ is simply the module addition $\cJ\oplus \cJ \to \cJ$. In other words, we have a commutative diagram
\[
\begin{tikzcd}
\Ker(\dG'(i_\cJ'\circ i)) \ar[d,"\dG'(\sigma)"] \ar[r,equals] & \Der_{\cO_{\dX'}}(e^{-1}(\cO_{\dG'}),\cJ)\times \Der_{\cO_{\dX'}}(e^{-1}(\cO_{\dG'}),\cJ) \ar[d,"+"] \\
\Ker(\dG'(i)) \ar[r,equals] & \Der_{\cO_{\dX'}}(e^{-1}(\cO_{\dG'}),\cJ).
\end{tikzcd}
\]
where we highlight the fact that $\dG'(\sigma)$ is a homomorphism with respect to the group structure within $\dG'$ since, as a sheaf, $\dG'$ is a group functor. In particular, this equips $\Der_{\cO_{\dX'}}(e^{-1}(\cO_{\dG'}),\cJ)$ with two group structures, one of which is defined by a homomorphism with respect to the other. Thus, \cite[Exp 2, 3.10]{SGA3} applies and we conclude that the group structures coincide (and furthermore are commutative, but this is apparent for addition of derivations). This finishes the proof.
\end{proof}

\begin{cor}
Working in the same setting as \Cref{infinitesimal_autos_general}, the kernel of the restriction morphism
\[
\res \colon \dG' \to \ti_*(\ti^*(\dG'))
\]
of \eqref{eq: restriction_morphism} is the sheaf $\cDer_{\cO_{\dX'}}(e^{-1}(\cO_{\dG'}),\cJ)$.
\end{cor}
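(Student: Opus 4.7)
The plan is to sheafify the identification established in the proof of \Cref{infinitesimal_autos_general}. For each open $U' \subseteq \dX'$ with corresponding open $U \subseteq \dX$ under the homeomorphism underlying $\ti$, the restricted data $(\dG'|_{U'},\, U \hookrightarrow U',\, \cJ|_{U'},\, e|_{U'})$ satisfies exactly the hypotheses of the proposition: $\dG'|_{U'}$ is a group scheme over $U'$, a square-zero ideal restricts to a square-zero ideal, and the identity section and structure morphism restrict appropriately. Applying \Cref{infinitesimal_autos_general} to this restricted data gives a group isomorphism
\[
\Ker\bigl(\dG'(U') \to \dG'(U)\bigr) \;\cong\; \Der_{\cO_{U'}}\bigl(e^{-1}(\cO_{\dG'})|_{U'},\, \cJ|_{U'}\bigr).
\]

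The left-hand side is by definition the value $\Ker(\res)(U')$, since the kernel of a morphism of sheaves of groups is computed sectionwise. The right-hand side is $\cDer_{\cO_{\dX'}}(e^{-1}(\cO_{\dG'}), \cJ)(U')$ by definition of the derivation sheaf. To promote these pointwise isomorphisms to an isomorphism of sheaves on $\dX'$, I would verify compatibility with restriction along open inclusions $V' \subseteq U'$. This compatibility is immediate from the explicit description of the bijection in the proof of the proposition: a section $\varphi \in \Ker(\res)(U')$ corresponds to the derivation $\varphi^\sharp - e|_{U'}^\sharp$, and both terms manifestly commute with further restriction of sheaves of rings.

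I expect no substantive obstacle. All of the real content is contained in \Cref{infinitesimal_autos_general}, and the corollary is a formal consequence once one observes that every construction appearing in that proof --- the homeomorphism $\ti$, the square-zero ideal $\cJ$, the identity section $e$, the $\cO_{\dX'}$-algebra structure on $e^{-1}(\cO_{\dG'})$ via $p$, and the $e^{-1}(\cO_{\dG'})$-module structure on $\cJ$ via $e^\sharp$ --- is local and restricts well. A slightly cleaner packaging would be to note that the proof of the proposition in fact constructs mutually inverse morphisms of presheaves of groups on $\dX'$; since both sides are already sheaves, the sheaf-theoretic identification follows without any additional sheafification step.
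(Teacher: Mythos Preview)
Your proposal is correct and follows essentially the same approach as the paper: apply \Cref{infinitesimal_autos_general} to the restriction of all the data over each open $U' \subseteq \dX'$, then identify the resulting group of derivations with the $U'$-sections of $\cDer_{\cO_{\dX'}}(e^{-1}(\cO_{\dG'}),\cJ)$. The paper is slightly more explicit about the identification $e_U^{-1}(\cO_{\dG'\times_{\dX'} U'}) = e^{-1}(\cO_{\dG'})|_{U'}$ via the open immersion $\dG'\times_{\dX'} U' \to \dG'$, while you add a remark on compatibility with restriction; both are minor bookkeeping differences rather than substantive ones.
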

\begin{proof}
Denote the kernel of the morphism $\res \colon \dG' \to \ti_*(\ti^*(\dG'))$ by $\cK$. Let $U' \subseteq \dX'$ be an open subset. We have a fiber product diagram
\[
\begin{tikzcd}
\dG'\times_{\dX'} U' \ar[d,xshift=0.5ex,"p_U"] \ar[r] & \dG' \ar[d,xshift=0.5ex,"p"] \\
U' \ar[r] \ar[u,xshift=-0.5ex,"e_U"] & \dX' \ar[u,xshift=-0.5ex,"e"]
\end{tikzcd}
\]
where $\dG'\times_{\dX'} U'$ represents the restriction of the sheaf $\dG'$ to $U'$. Additionally, the closed embedding $\ti_U \colon U \to U'$ from the reduction $U= U' \times_{\dX} \dX'$ is defined by the square-zero ideal $\cJ|_{U'}$. Therefore, applying \Cref{infinitesimal_autos_general} we see that
\[
\cK(U') = \Der_{\cO_{U'}}(e_U^{-1}(\cO_{\dG'\times_{\dX'} U'}),\cJ|_{U'}).
\]
However, since $U' \to \dX'$ is an open immersion, so is $\dG'\times_{\dX'} U' \to \dG'$, and therefore $\cO_{\dG'\times_{\dX'} U'} = \cO_{\dG'}|_{\dG'\times_{\dX'} U'}$. Thus, we have that $e_U^{-1}(\cO_{\dG'\times_{\dX'} U'}) = e^{-1}(\cO_\dG)|_{U'}$, and therefore
\begin{align*}
\cK(U') &= \Der_{\cO_{U'}}(e^{-1}(\cO_\dG)|_{U'},\cJ|_{U'}) \\
&= \cDer_{\cO_{\dX'}}(e^{-1}(\cO_{\dG'}),\cJ)(U').
\end{align*}
This means that $\cK = \cDer_{\cO_{\dX'}}(e^{-1}(\cO_{\dG'}),\cJ)$, as desired.
\end{proof}

\begin{lem}\label{Der_reduction_to_X}
Let $i_0 \colon X \to \dX'$ be a closed embedding of schemes defined by a nilpotent ideal $\cM \subseteq \cO_{\dX'}$. Additionally, we consider
\begin{enumerate}
\item an ideal $\cJ \subseteq \cM$ such that $\cM \cdot \cJ = 0$, and 
\item a group scheme $\dG'$ over $\dX'$ such that the structure morphism $p'\colon \dG' \to \dX'$ is flat.
\end{enumerate}
Let $e'\colon \dX' \to \dG'$ be the identity section. Set $\bG = \dG' \times_{\dX'} X$ to be the restriction of $\dG'$ to $X$, with identity section $e\colon X \to \bG$. Then, there is a canonical isomorphism
\[
\cDer_{\cO_{\dX'}}(e'^{-1}(\cO_{\dG'}),\cJ) \cong \cDer_{\cO_X}(e^{-1}(\cO_\bG),\cJ)
\]
on the topological space $|\dX'|=|X|$.
\end{lem}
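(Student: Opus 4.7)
The plan is to reduce the statement to the elementary algebraic observation that, when $\cM \cdot \cJ = 0$, any $\cO_{\dX'}$--linear map into $\cJ$ kills $\cM$, hence factors through the reduction modulo $\cM$. To do this, I first need to identify $e^{-1}(\cO_{\bG})$ with an explicit quotient of $e'^{-1}(\cO_{\dG'})$; after that the rest is a routine check that derivations on one side translate bijectively into derivations on the other.

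The first step is the geometric identification. Since $i_0 \colon X \inj \dX'$ is a closed immersion with nilpotent ideal $\cM$, it is a homeomorphism on underlying topological spaces, so $|\bG| = |\dG' \times_{\dX'} X| = |\dG'|$ as well. Base change preserves closed immersions and their defining ideal sheaves, so the closed immersion $\bG \inj \dG'$ has ideal sheaf $p'^{-1}(\cM)\cdot \cO_{\dG'}$. Applying the exact functor $e^{-1}$ and using that $|e|$ factors through $|e'|$ (since $p' \circ e' = \Id_{\dX'}$), we obtain
\[
e^{-1}(\cO_{\bG}) \;=\; e'^{-1}(\cO_{\dG'})\big/\cM \cdot e'^{-1}(\cO_{\dG'}),
\]
where $\cM$ acts on $e'^{-1}(\cO_{\dG'})$ via $e'^{-1}(p'^\sharp)$. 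Let $\pi \colon e'^{-1}(\cO_{\dG'}) \surj e^{-1}(\cO_{\bG})$ denote the canonical surjection.

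The second step is the bijection of derivation sheaves. Working on an open $U' \subseteq \dX'$ and given $d \in \Der_{\cO_{U'}}(e'^{-1}(\cO_{\dG'})|_{U'}, \cJ|_{U'})$, the $\cO_{\dX'}$--linearity of $d$ together with $\cM \cdot \cJ = 0$ forces $d(m \cdot y) = m \cdot d(y) = 0$ for any local sections $m$ of $\cM$ and $y$ of $e'^{-1}(\cO_{\dG'})$. Hence $d$ descends to a well-defined map $\bar d \colon e^{-1}(\cO_{\bG})|_{U'} \to \cJ|_{U'}$. Because $\cM \cdot \cJ = 0$, the $e'^{-1}(\cO_{\dG'})$--module structure on $\cJ$ induced by $e'^\sharp$ coincides with the $e^{-1}(\cO_{\bG})$--module structure induced by $e^\sharp$, so the Leibniz identity for $d$ transports verbatim into the Leibniz identity for $\bar d$. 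Conversely, given $\bar d \in \Der_{\cO_{U}}(e^{-1}(\cO_{\bG})|_{U}, \cJ|_{U})$, composing with $\pi$ yields a map $\bar d \circ \pi$ which is automatically $\cO_{\dX'}$--linear because $\cO_{\dX'}$ acts on $\cJ$ through its quotient $\cO_X$; a parallel computation shows the Leibniz rule is preserved.

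These two assignments are evidently mutually inverse and compatible with restriction along opens $U' \subseteq \dX'$, so they sheafify to the desired isomorphism $\cDer_{\cO_{\dX'}}(e'^{-1}(\cO_{\dG'}), \cJ) \cong \cDer_{\cO_X}(e^{-1}(\cO_{\bG}), \cJ)$. The main delicate point is the identification of $e^{-1}(\cO_{\bG})$ with the correct quotient of $e'^{-1}(\cO_{\dG'})$: once this is set up properly, the derivation axioms take care of themselves, and the flatness hypothesis on $p'$ does not need to intervene in the argument.
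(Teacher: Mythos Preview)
Your argument is correct and follows essentially the same route as the paper's proof: both identify $e^{-1}(\cO_{\bG})$ as the quotient of $e'^{-1}(\cO_{\dG'})$ by the ideal generated by $\cM$, then use $\cM\cdot\cJ=0$ together with $\cO_{\dX'}$--linearity to factor any derivation through this quotient, with the inverse given by precomposing with the surjection.

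The one notable difference is your closing remark that flatness of $p'$ is not actually needed. You are right: the paper invokes flatness to make $\cM\otimes_{\cO_{\dX'}} e'^{-1}(\cO_{\dG'})\to e'^{-1}(\cO_{\dG'})$ injective, but the argument only ever uses its \emph{image} (namely $\cM\cdot e'^{-1}(\cO_{\dG'})$), which is the kernel of the surjection to $e^{-1}(\cO_{\bG})$ regardless of flatness. Your direct identification of $e^{-1}(\cO_{\bG})$ via the ideal sheaf of the base-changed closed immersion bypasses this step entirely, so your version is slightly sharper than what the paper records.
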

\begin{proof}
We begin by noting that we have an exact sequence of sheaves on $|X|$, 
\[
0 \to \cM \to \cO_{\dX'} \to \cO_X \to 0.
\]
which we may view as an exact sequence of $\cO_{\dX'}$--modules. Since $p' \colon \dG' \to \dX'$ is flat, for each point $g \in \dG'$ the morphism of local rings
\[
p'_g \colon \cO_{\dX',p'(g)} \to \cO_{\dG',g}
\]
is flat. In particular, for a point $x \in \dX'$, the map of stalks at $e'(x)$, namely
\[
p'_{e'(x)} \colon \cO_{\dX',p'(e'(x))} = \cO_{\dX',x} \to \cO_{\dG',e'(x)},
\]
is flat. However, these are the stalks of the morphism $e'^{-1}(p'^\sharp) \colon \cO_{\dX'} \to e'^{-1}(\cO_{\dG'})$. Hence, using $e'^{-1}(p'^\sharp)$, the functor $\und \otimes_{\cO_{\dX'}} e'^{-1}(\cO_{\dG'})$ is exact on the category of $\cO_{\dX'}$--modules. Thus, we also have an exact sequence
\[
0 \to \cM \otimes_{\cO_{\dX'}} e'^{-1}(\cO_{\dG'}) \to e'^{-1}(\cO_{\dG'}) \to \cO_X \otimes_{\cO_{\dX'}} e'^{-1}(\cO_{\dG'}) \to 0.
\]
Next, we claim that $\cO_X \otimes_{\cO_{\dX'}} e'^{-1}(\cO_{\dG'}) \cong e^{-1}(\cO_{\bG})$. Indeed, since we have a fiber product diagram
\[
\begin{tikzcd}
\bG \ar[r] \ar[d] & \dG' \ar[d,"p'"] \\
X \ar[r,"i_0"] & \dX'
\end{tikzcd}
\]
and $|X| = |\dX'|$, the pullback will share the same property, namely $|\bG| = |\dG'|$. Furthermore, $|e|=|e'|$ and $|p|=|p'|$, by which we mean the maps agree on the underlying topological spaces. To identify the sheaf $\cO_\bG$ on $|\dG'|$, note that we have a diagram of sheaves on $|\dG'|$
\[
\begin{tikzcd}
\cO_\bG & \cO_{\dG'} \ar[l] \\
p^{-1}(\cO_X) \ar[u] & p'^{-1}(\cO_{\dX'}) \ar[u,swap,"p'^\sharp"] \ar[l,swap,"p^{-1}(i_0^\sharp)"]
\end{tikzcd}
\]
which is now cocartesian. Passing this through $e^{-1}=e'^{-1}$ to get a diagram on $|X|$, we obtain
\[
\begin{tikzcd}
e^{-1}(\cO_\bG) & e'^{-1}(\cO_{\dG'}) \ar[l] \\
\cO_X \ar[u] & \cO_{\dX'} \ar[u,swap,"e'^{-1}(p'^\sharp)"] \ar[l,swap,"i_0^\sharp"]
\end{tikzcd}
\]
which shows that $e^{-1}(\cO_{\bG}) \cong \cO_X \otimes_{\cO_{\dX'}} e'^{-1}(\cO_{\dG'})$ as claimed.

Now, let $D \in \cDer_{\cO_{\dX'}}(e'^{-1}(\cO_{\dG'}),\cJ)$ be any derivation. For any $U\subset \dX'$ open and sections $m\in \cM(U)$ and $x\in e'^{-1}(\cO_{\dG'})(U)$, we have that
\[
D(mx) = m\cdot D(x) = 0
\]
since $\cM\cdot \cJ = 0$. Hence, the image of $\cM \otimes_{\cO_{\dX'}} e'^{-1}(\cO_{\dG'}) \inj e'^{-1}(\cO_{\dG'})$ is always in the kernel of $D$. As such, it defines a unique morphism
\[
D_0 \colon e^{-1}(\cO_\bG) \to \cJ
\]
which is $\cO_X$--linear and also a derivation with respect to the appropriate module structures. Conversely, given a derivation $C_0 \colon e^{-1}(\cO_\bG) \to \cJ$ it is clear that the composition
\[
C \colon e'^{-1}(\cO_{\dG'}) \surj e^{-1}(\cO_\bG) \xrightarrow{C_0} \cJ
\]
is an $\cO_{\dX'}$--linear derivation. It is then straightforward to check that these two maps are mutually inverse, i.e., we have a canonical isomorphism
\begin{align*}
\cDer_{\cO_{\dX'}}(e'^{-1}(\cO_{\dG'}),\cJ) &\iso \cDer_{\cO_X}(e^{-1}(\cO_\bG),\cJ) \\
D &\mapsto D_0.
\end{align*}
This completes the proof.
\end{proof}

\begin{lem}\label{lem: infautlie}
Consider schemes $X$, $\dX$, and $\dX'$ defined over $k$, $C$, and $C'$ as in \Cref{relative_deformations}. Let $\bG'$ be a group scheme over $\dX'$ such that the structure morphism $p'\colon \dG' \to \dX'$ is flat. Let $\bG$ be the restriction of $\dG'$ to $X$. Then, the kernel of the restriction morphism \eqref{eq: restriction_morphism} is isomorphic to
\[
\Lie_\bG\otimes_{\cO_X} \cJ
\]
as a sheaf on $|X|=|X'|$.
\end{lem}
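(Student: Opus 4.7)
The plan is to chain the preceding corollary and \Cref{Der_reduction_to_X} with the standard description of $\Lie_\bG$ in terms of derivations at the identity. First, the corollary immediately following \Cref{infinitesimal_autos_general} identifies the kernel of the restriction morphism \eqref{eq: restriction_morphism} with
\[
\cDer_{\cO_{\dX'}}(e'^{-1}(\cO_{\dG'}),\cJ),
\]
where $e'\colon \dX' \to \dG'$ is the identity section. Because the surjection $C' \surj C$ is small, i.e., $J\cdot \mathfrak{m}_{C'}=0$, the defining ideal $\cM \subseteq \cO_{\dX'}$ of the closed immersion $i_0\colon X \hookrightarrow \dX'$ satisfies $\cM\cdot \cJ = 0$. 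Together with the flatness of $p'$, this puts us in position to apply \Cref{Der_reduction_to_X}, producing a canonical isomorphism
\[
\cDer_{\cO_{\dX'}}(e'^{-1}(\cO_{\dG'}),\cJ) \cong \cDer_{\cO_X}(e^{-1}(\cO_\bG),\cJ),
\]
where $e\colon X \to \bG$ is the identity section of $\bG$.

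The remaining task is to identify $\cDer_{\cO_X}(e^{-1}(\cO_\bG),\cJ)$ with $\Lie_\bG \otimes_{\cO_X} \cJ$. I would proceed by specializing the preceding two steps to the dual numbers thickening $X \hookrightarrow X[\varepsilon]$, where the square-zero ideal is $\varepsilon\cdot \cO_{X[\varepsilon]}\cong \cO_X$. Combining the same corollary and \Cref{Der_reduction_to_X} with the defining exact sequence \eqref{Lie_exact_sequence} then yields the intrinsic description
\[
\Lie_\bG \cong \cDer_{\cO_X}(e^{-1}(\cO_\bG),\cO_X).
\]
The natural evaluation map
\[
\Lie_\bG \otimes_{\cO_X} \cJ \longrightarrow \cDer_{\cO_X}(e^{-1}(\cO_\bG),\cJ), \qquad D\otimes j \longmapsto \bigl(x\mapsto D(x)\cdot j\bigr),
\]
is the required candidate isomorphism. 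To verify that it is indeed an isomorphism, one can either invoke local freeness of $\Lie_\bG$ to commute $\cJ$ past the internal Hom, or more intrinsically use the universal property of $\Omega_{\bG/X}$, which identifies both sides with $\cHom_{\cO_X}(e^*(\Omega_{\bG/X}),\cJ)$ in view of \eqref{Lie_algebra_dual}.

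The main obstacle is precisely this last step: to convert derivations valued in an arbitrary square-zero ideal $\cJ$ into $\cO_X$-valued derivations tensored with $\cJ$, one needs $e^*(\Omega_{\bG/X})$, equivalently $\Lie_\bG$, to be finite locally free on $X$. This is automatic in all applications we care about, where $\bG$ is a smooth automorphism group such as $\PGL_\cA$ or $\PGO_{(\cA,\sigma,f)}$, so the bare flatness hypothesis in the statement should be understood as implicitly including whatever additional regularity ensures the local freeness of $\Lie_\bG$.
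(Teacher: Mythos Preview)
Your overall route is the same as the paper's: reduce to $\cDer_{\cO_X}(e^{-1}(\cO_\bG),\cJ)$ via the corollary and \Cref{Der_reduction_to_X}, then pass through the module of differentials to reach $\cHom_{\cO_X}(e^*(\Omega_{\bG/X}),\cJ)$ and compare with \eqref{Lie_algebra_dual}. The paper does exactly this, citing \cite[\href{https://stacks.math.columbia.edu/tag/08RR}{Tag 08RR}]{Stacks} for the identification $\Omega_{e^{-1}(\cO_\bG)/\cO_X} = e^{-1}(\Omega_{\bG/X})$ and then tensoring out $\cJ$.

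Where you diverge is in the diagnosis of the ``main obstacle.'' You assert that extracting $\cJ$ from $\cHom_{\cO_X}(e^*(\Omega_{\bG/X}),\cJ)$ requires $e^*(\Omega_{\bG/X})$ (equivalently $\Lie_\bG$) to be finite locally free, and you then hedge that this should be read as an implicit extra hypothesis. That is not needed. The isomorphism
\[
\cHom_{\cO_X}(M,\cJ) \cong \cHom_{\cO_X}(M,\cO_X)\otimes_{\cO_X}\cJ
\]
holds for arbitrary $M$ as soon as $\cJ$ is finite locally free, and in the deformation setup of \Cref{relative_deformations} it always is: since $\mathfrak{m}_{C'}\cdot J=0$, the kernel $J$ is a finite-dimensional $k$--vector space, and hence $\cJ \cong \cO_X\otimes_k J$ is free of rank $\dim_k J$ as an $\cO_X$--module. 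The paper invokes exactly this. So the flatness hypothesis on $p'$ really is sufficient as stated; no smoothness of $\bG$ or local freeness of $\Lie_\bG$ is required, and your closing caveat should be dropped.
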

\begin{proof}
Recall that the infinitesimal thickening $i_0 \colon X \to \dX'$ is defined by a nilpotent ideal $\cM \subset \cO_{\dX'}$ and the infinitesimal thickening $\ti \colon \dX \to \dX'$ is defined by $\cJ\subseteq \cM$ such that $\cM\cdot \cJ=0$, in particular $\cJ^2=0$. Additionally, $\cJ$ is finite locally free since it comes from the kernel $J$ of the map $C' \surj C$, and $J$ is a finite dimensional $k$--vector space.

Due to our assumptions, we can invoke \Cref{infinitesimal_autos_general} and \Cref{Der_reduction_to_X} to see that the kernel of the restriction morphism is isomorphic to
\[
\cDer_{\cO_X}(e^{-1}(\cO_\bG),\cJ)
\]
as a sheaf on $X$. By definition, the group of derivations $\Der_{\cO_X}(e^{-1}(\cO_\bG),\cJ)$ is isomorphic to the group of $e^{-1}(\cO_\bG)$--linear morphisms $\Hom_{e^{-1}(\cO_\bG)}(\Omega_{e^{-1}(\cO_\bG)/\cO_X},\cJ)$ between the module of relative differentials and $\cJ$. Since $e$ is a section of $p$, we have $p\circ e = \Id_X$ and so $\cO_X = e^{-1}(p^{-1}(\cO_X))$. Now, applying \cite[\href{https://stacks.math.columbia.edu/tag/08RR}{Tag 08RR}]{Stacks}, we have a canonical identification
\[
\Omega_{e^{-1}(\cO_\bG)/\cO_X} = e^{-1}(\Omega_{\cO_\bG/p^{-1}(\cO_X)}).
\]
Notationally, $\Omega_{\cO_\bG/p^{-1}(\cO_X)} = \Omega_{\bG/X}$ is the module of relative differentials, which is a sheaf on $\bG$. Since $e^*(\Omega_{\bG/X}) = e^{-1}(\Omega_{\bG/X})\otimes_{e^{-1}(\cO_\bG)}\cO_X$ by definition, we then have that
\[
\Hom_{e^{-1}(\cO_\bG)}(\Omega_{e^{-1}(\cO_\bG)/\cO_X},\cJ) \cong \Hom_{\cO_X}(e^*(\Omega_{\bG/X}),\cJ).
\]
Using \cite[\href{https://stacks.math.columbia.edu/tag/01V0}{Tag 01V0}]{Stacks} to restrict the module of relative differentials, we also have that for any $U \subseteq X$ open
\[
\Der_{\cO_U}(e^{-1}(\cO_\bG)|_U,\cJ|_U) \cong \Hom_{\cO_U}(e^*(\Omega_{\bG/X})|_U,\cJ|_U),
\]
and so globally we have an isomorphism of sheaves
\[
\cDer_{\cO_X}(e^{-1}(\cO_\bG),\cJ) \cong \cHom_{\cO_X}(e^*(\Omega_{\bG/X}),\cJ).
\]
Since $\cJ$ is finite locally free, we can write
\[
\cHom_{\cO_X}(e^*(\Omega_{\bG/X}),\cJ) \cong \cHom_{\cO_X}(e^*(\Omega_{\bG/X}),\cO_X)\otimes_{\cO_X} \cJ
\]
where the first factor is $\Lie_\bG$ by \eqref{Lie_algebra_dual}, and so we are done.
\end{proof}

\subsubsection{Obstructions and classifying deformations} Given a deformation situation as in \Cref{relative_deformations}, there exists a canonical obstruction class in a corresponding cohomology group which vanishes if and only if such a deformation exists. The set of all deformations is then naturally a torsor for an appropriate vector space. Here we make this precise in each of the cases we are interested in. We note that these results are instances of the very general statement \cite[2.6]{Illusie}, which classifies deformations of torsors. Throughout this subsection we maintain the assumption that we are working with a small extension, i.e., that $J\cdot \mathfrak{m}_{C'}=0$ and so $J^2=0$. 

\begin{lem}[{\cite[Theorem 7.1]{MR2583634}}]\label{lem: deflf}
Let $\dF$ be a locally free sheaf of finite rank on $\dX$. Set $\cF=i^*\dF$. Then:
\begin{enumerate}
\item There is a canonical obstruction class $ob(\dF;\dX'/\dX)\in \mathrm{H}^2(X, \cEnd(\cF)\otimes_{\cO_X} \cJ)$ whose vanishing is a necessary and sufficient condition for the existence of a deformation $\dF'$ of $\dF$ to $\dX'$ as a finite locally free sheaf.
\item If a deformation $\dF'$ of $\dF$ to $\dX'$ exists as a finite locally free sheaf, then the set of all such deformations is a torsor under $\mathrm{H}^1(X, \cEnd(\cF)\otimes_{\cO_X} \cJ)$.
\item If a deformation $\dF'$ of $\dF$ to $\dX'$ exists as a finite locally free sheaf, then the group of automorphisms of $\dF'$ inducing the identity on $\ti^*\dF'\cong \dF$ is isomorphic with $\mathrm{H}^0(X, \cEnd(\cF)\otimes_{\cO_X} \cJ)$.
\end{enumerate}
\end{lem}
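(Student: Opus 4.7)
The plan is to reduce the statement to the classical Čech-theoretic deformation theory of $\GL_n$-torsors, which can be viewed either as a specialization of \cite[Theorem 2.6]{Illusie} to the smooth group scheme $\bG = \GL_n$, or proved directly as follows. The enabling fact throughout is that $\cJ^2 = 0$ (which follows from $\cJ \cdot \mathfrak{m}_{C'} = 0$), so that $\cJ$ acquires a natural $\cO_X$-module structure and all relevant cocycle computations linearize. First I would choose a cover $\mathcal{U} = \{U_i\}$ of the common topological space $|X|=|\dX|=|\dX'|$ on which $\dF$ is free of rank $n$, and fix trivializations with transition cocycle $\tg_{ij} \in \GL_n(\cO_\dX(U_{ij}))$. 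Finite locally free deformations of $\dF$ to $\dX'$ then correspond, up to isomorphism inducing the identity on $\ti^*\dF' \iso \dF$, to lifts $\tg'_{ij} \in \GL_n(\cO_{\dX'}(U_{ij}))$ of $\tg_{ij}$ satisfying the cocycle condition on triple intersections, and such local lifts always exist since $\cO_{\dX'} \twoheadrightarrow \cO_{\dX}$ is surjective (equivalently, since $\GL_n$ is smooth).

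For (1), I would set $c_{ijk} = \tg'_{ij}\tg'_{jk}(\tg'_{ik})^{-1} - 1$ on $U_{ijk}$. The cocycle relation for the $\tg_{ij}$ forces $c_{ijk} \in \Mat_n(\cJ(U_{ijk}))$, and conjugation by the $\tg_{ij}$ assembles the local matrices into a Čech $2$-cocycle with values in $\cEnd(\cF)\otimes_{\cO_X}\cJ$. A direct check, using $\cJ^2=0$ repeatedly to linearize products, shows that the class $ob(\dF;\dX'/\dX) \in \mathrm{H}^2(X,\cEnd(\cF)\otimes_{\cO_X}\cJ)$ is independent of the choice of lifts and of the refinement of $\mathcal{U}$, and that its vanishing is equivalent to the existence of a global cocycle lift, hence to the existence of $\dF'$.

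For (2), given two deformations with cocycles $\tg'_{ij}$ and $\tg''_{ij}$, the ratio $\tg''_{ij}(\tg'_{ij})^{-1} = 1 + \eta_{ij}$ has $\eta_{ij} \in \Mat_n(\cJ(U_{ij}))$; after the same twist by $\tg_{ij}$ this defines a Čech $1$-cocycle in $\cEnd(\cF)\otimes_{\cO_X}\cJ$, and altering the chosen trivializations of $\dF'$ modifies it by a coboundary, yielding the claimed torsor structure under $\mathrm{H}^1$. For (3), an automorphism $\alpha$ of $\dF'$ inducing the identity on $\dF$ is locally of the form $1 + \varepsilon_i$ with $\varepsilon_i \in \Mat_n(\cJ(U_i))$; the overlap condition $(1+\varepsilon_i)\tg'_{ij} = \tg'_{ij}(1+\varepsilon_j)$, again using $\cJ^2=0$ after the twist, reduces to $\varepsilon_i = \varepsilon_j$ on $U_{ij}$, so such automorphisms correspond exactly to global sections of $\cEnd(\cF)\otimes_{\cO_X}\cJ$.

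The main technical obstacle is the bookkeeping for the twist by $\tg_{ij}$, which is what identifies the local matrix cocycles valued in $\fgl_n \otimes \cJ$ on the trivializing patches with Čech data for the globally defined sheaf $\cEnd(\cF)\otimes_{\cO_X}\cJ$. This is routine but easy to miswrite, and it is the only place where the precise $\cO_X$-module structure on $\cJ$ comes in; once it is set up, each of the three assertions follows by standard Čech manipulations.
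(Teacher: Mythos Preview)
Your proposal is correct and follows the standard Čech-theoretic argument. The paper itself does not give a proof of this lemma at all: it simply cites \cite[Theorem 7.1]{MR2583634} (Hartshorne's \emph{Deformation Theory}) and moves on, later remarking in the proof of \Cref{lem: Azdef} that the Azumaya case is ``nearly identical to the proof of \Cref{lem: deflf} with the sole change that one splits $\dA$ by \'etale covers rather than Zariski open covers.'' What you have written is essentially the proof Hartshorne gives, so your approach and the paper's (deferred) approach coincide.
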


Similarly standard is the following analogous statement for Azumaya algebras. In the following we make use of the fact that for any Azumaya algebra $\mathcal{A}$ on a scheme $X$, there is an identification $\mathfrak{pgl}_{\mathcal{A}}\cong \mathcal{A}/\mathcal{O}_X$ (cf.\ \Cref{lem: liepgl} and above).

\begin{lem}[cf.\ {\cite[Lemma 3.1]{MR2060023}}]\label{lem: Azdef}
Assume that $X$ is separated over $k$. Let $\dA$ be an Azumaya algebra on $\dX$ and set $\cA=i^*\dA$. Then the following hold:
\begin{enumerate}
\item\label{lem: aoba} There exists a canonical obstruction $ob(\dA;\dX'/\dX)\in \mathrm{H}^2(X, (\mathcal{A}/\mathcal{O}_X)\otimes_{\cO_X} \cJ)$ whose vanishing is a necessary and sufficient condition for the existence of a deformation $\dA'$ of $\dA$ to $\dX'$ as an Azumaya algebra.
\item\label{lem: atora} If a deformation $\dA'$ of $\dA$ to $\dX'$ exists as an Azumaya algebra, then the set of all such deformations is a torsor under $\mathrm{H}^1(X, (\mathcal{A}/\mathcal{O}_X)\otimes_{\cO_X} \cJ)$.
\item\label{lem: aauta} If a deformation $\dA'$ of $\dA$ to $\dX'$ exists as an Azumaya algebra, then the group of automorphisms of $\dA'$ inducing the identity on $\ti^*\dA'\cong \dA$ is isomorphic with $\mathrm{H}^0(X, (\mathcal{A}/\mathcal{O}_X)\otimes_{\cO_X} \cJ)$.
\end{enumerate}
\end{lem}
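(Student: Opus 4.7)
The plan is to reduce this statement to Illusie's general theorem on deformations of $\bG$--torsors for a smooth group scheme $\bG$, in the same spirit that \Cref{lem: deflf} is a special case of that theorem for $\bG = \GL_n$. Here the relevant group scheme is $\PGL_\cA$, or more concretely $\PGL_n$ where $n$ is the (locally constant) degree of $\cA$; after passing to connected components of $X$ we may assume this degree is constant, which is why the separatedness assumption on $X$ is convenient. The key translation is that an Azumaya $\cO_{\dX}$--algebra $\dA$ of degree $n$ is the same data as a $\PGL_n$--torsor $\mathscr{H}_\dA$ on the étale site of $\dX$ (using \cite[Theorem 3.9]{MR559531}), and a deformation of $\dA$ to $\dX'$ in the sense of \Cref{defn: defaz} corresponds to a lift of $\mathscr{H}_\dA$ to a $\PGL_n$--torsor on $\dX'$.

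First I would apply \cite[Theorem 2.6]{Illusie} to the smooth group scheme $\PGL_n$ and the square-zero thickening $\dX \hookrightarrow \dX'$ defined by $\cJ = \cO_{\dX'}\otimes_{C'} J$; this produces an obstruction class in $\rH^2(X, \Lie_{(\PGL_n)_\dA}\otimes_{\cO_X} \cJ)$ whose vanishing is necessary and sufficient for a lift of $\mathscr{H}_\dA$, a free transitive action of $\rH^1(X, \Lie_{(\PGL_n)_\dA}\otimes_{\cO_X} \cJ)$ on the set of lifts when nonempty, and an identification of the group of infinitesimal automorphisms of any lift with $\rH^0(X, \Lie_{(\PGL_n)_\dA}\otimes_{\cO_X} \cJ)$. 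Here $(\PGL_n)_\dA$ denotes the inner twist of $\PGL_n$ by $\mathscr{H}_\dA$, which is none other than $\PGL_\dA = \bAut(\dA)$ as a group scheme over $\dX$. The infinitesimal automorphism part can alternatively be read off directly from \Cref{lem: infautlie} applied to $\dG' = \PGL_{\dA'}$ (once a lift $\dA'$ exists), which identifies the kernel of the restriction morphism $\PGL_{\dA'} \to \ti_*\ti^*\PGL_{\dA'}$ with $\Lie_{\PGL_\cA}\otimes_{\cO_X}\cJ$.

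The final step is to identify this Lie algebra sheaf in the statement's form. By \Cref{lem: liepgl}, the Lie algebra sheaf of $\PGL_\dA$ is $\fpgl_\dA \cong \dA/\cO_\dX$, and restricting along $i\colon X \to \dX$ gives $\cA/\cO_X$ as an $\cO_X$--module (using that $\dA$ is flat over $C$ so that $i^*\dA = \cA$ and $i^*\cO_\dX = \cO_X$). Substituting into the cohomology groups from Illusie's theorem yields the three statements \ref{lem: aoba}, \ref{lem: atora}, \ref{lem: aauta} exactly as formulated.

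The main technical point I would be careful with is verifying that the abstract obstruction-and-torsor framework of \cite[2.6]{Illusie} (which is phrased cohomologically for torsors on a site with respect to an abelian sheaf) matches the concrete notion of a deformation of an Azumaya algebra in \Cref{defn: defaz}, including the datum of the isomorphism $g\colon \ti^*\dA' \iso \dA$. This is essentially bookkeeping: a lift of $\mathscr{H}_\dA$ to $\dX'$ produces by the tannakian dictionary an Azumaya algebra $\dA'$ together with a canonical identification $\ti^*\dA' \cong \dA$, and conversely. Once this dictionary is established, the three conclusions follow directly from Illusie's theorem together with the identification $\Lie_{\PGL_\dA} \cong \dA/\cO_\dX$. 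The separatedness hypothesis enters only to guarantee a well-behaved underlying étale site and to allow the decomposition into connected components on which the degree of $\dA$ is constant.
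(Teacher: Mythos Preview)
Your proposal is correct but takes a genuinely different route from the paper. The paper does not invoke \cite[Theorem 2.6]{Illusie} directly; instead it mimics the \v{C}ech argument underlying \Cref{lem: deflf}, replacing Zariski covers by \'etale covers that split $\dA$. This produces an obstruction class in the \'etale \v{C}ech group $\check{\mathrm{H}}_{\acute{e}t}^2(X,(\cA/\cO_X)\otimes_{\cO_X}\cJ)$, which is then identified with ordinary sheaf cohomology using separatedness (via \cite[III Remark~2.16]{MR559531}) and quasi-coherence. Part~\ref{lem: aauta} is proved first, directly from \Cref{lem: infautlie}, exactly as you note as an alternative. Your route through Illusie's theorem is cleaner and more conceptual; the paper's hands-on \v{C}ech argument has the advantage of being self-contained and parallel to the locally free case.

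One point to correct: your explanation of the separatedness hypothesis is off. Decomposing $X$ into connected components requires no separatedness, and $\PGL_\cA$ is a perfectly good smooth group scheme even if the degree of $\cA$ is only locally constant, so there is no need to reduce to constant degree. In the paper's approach, separatedness is used precisely to pass from \v{C}ech to derived-functor \'etale cohomology in degree~$2$. In your approach via Illusie, the obstruction already lives in derived-functor cohomology, so the hypothesis is arguably unnecessary; you should either drop the justification or note that it is an artifact of the \v{C}ech construction.
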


\begin{proof}
We first prove \ref{lem: aauta}. Note that an $\mathcal{O}_{\dX'}$-Azumaya algebra automorphism $\phi$ of $\dA'$ that induces the identity on $\ti^*\dA'$ is exactly an element of the kernel of the restriction morphism
\[
K:=\Ker\left(\mathbf{PGL}_{\dA'}(\dX')\rightarrow \mathbf{PGL}_{\dA}(\dX)\right).
\]
Since we are in the setting of \Cref{lem: infautlie}, we know that
\[
K = \mathrm{H}^0(X,\fpgl_{\cA}\otimes_{\cO_X}\cJ) = \mathrm{H}^0(X,(\cA/\cO_X)\otimes_{\cO_X}\cJ)
\]
as claimed.

The proofs of \ref{lem: aoba} and \ref{lem: atora} are nearly identical to the proof of \Cref{lem: deflf} with the sole change that one splits $\dA$ by \'etale covers rather than Zariski open covers. The canonical obstruction class then naturally sits in the \'etale \v{C}ech cohomology group $\check{\mathrm{H}}_{\acute{e}t}^2(X, (\cA/\mathcal{O}_X)\otimes_{\cO_X} \cJ)$ but, since we are assuming that $X$ is separated, this agrees with the second \'etale cohomology group by \cite[III Remark 2.16]{MR559531}. Lastly, since $(\cA/\mathcal{O}_X)\otimes_{\cO_X} \cJ$ is quasi-coherent, we may replace the \'etale cohomology $\mathrm{H}^i_{\acute{e}t}(X,(\cA/\mathcal{O}_X)\otimes_{\cO_X} \cJ)$ with their Zariski counterparts, \cite[III Remark 3.8]{MR559531}.
\end{proof}

An analogous result holds for Azumaya algebras with quadratic pair:

\begin{lem}\label{lem: defqp}
Assume that $X$ is separated over $k$. Let $\dA$ be an Azumaya algebra on $\dX$ with quadratic pair $(\sigma,f)$. Set $\cA=i^*\dA$, $\sigma_0=i^*\sigma$ and $f_0=i^*f$. Then the following hold:
\begin{enumerate}
\item\label{lem: aobq} There exists an obstruction $ob(\dA,\sigma,f;\dX'/\dX)\in \mathrm{H}^2(X, \mathfrak{pgo}_{(\cA,\sigma_0,f_0)}\otimes_{\cO_X} \cJ)$ whose vanishing is a necessary and sufficient condition for the existence of a deformation $(\dA',\sigma',f')$ of $(\dA,\sigma,f)$ to $\dX'$.
\item\label{lem: atorq} If a deformation $(\dA',\sigma',f')$ of $(\dA,\sigma,f)$ to $\dX'$ exists, then the set of all such deformations is a torsor under $\mathrm{H}^1(X,\mathfrak{pgo}_{(\cA,\sigma_0,f_0)}\otimes_{\cO_X} \cJ)$.
\item\label{lem: aautq} If a deformation $(\dA',\sigma',f')$ of $(\dA,\sigma,f)$ to $\dX'$ exists, then the group of automorphisms of $(\dA',\sigma',f')$ which induce the identity on $(\ti^*\dA',\ti^*\sigma',\ti^*f')\cong (\dA,\sigma,f)$ is isomorphic with $\mathrm{H}^0(X, \mathfrak{pgo}_{(\cA,\sigma_0,f_0)}\otimes_{\cO_X} \cJ)$.
\end{enumerate}
\end{lem}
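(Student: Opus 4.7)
The strategy is to mirror the proof of \Cref{lem: Azdef} essentially verbatim, replacing $\PGL_{\cA}$ everywhere with $\PGO_{(\cA,\sigma_0,f_0)}$ and invoking the general torsor deformation result of Illusie \cite[Theorem 2.6]{Illusie}, but with the infinitesimal automorphism computation carried out via \Cref{lem: infautlie}. The key point that makes this substitution legitimate is that, by \cite[2.7.0.32]{CF2015GroupesClassiques}, all quadratic triples of a given degree are \'etale-locally isomorphic to the split triple $(\Mat_{2n}(\cO_X),\sigma_{2n},f_{2n})$, and the representing group scheme $\PGO_{(\cA,\sigma,f)}$ is smooth and affine over its base. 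Hence a deformation of $(\dA,\sigma,f)$ to $\dX'$ is precisely a $\PGO_{(\cA,\sigma_0,f_0)}$-torsor deformation, and the infinitesimal picture is governed by $\Lie_{\PGO_{(\cA,\sigma_0,f_0)}}=\fpgo_{(\cA,\sigma_0,f_0)}$.

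I would begin with \ref{lem: aautq}, which is the cleanest. An automorphism of $(\dA',\sigma',f')$ inducing the identity on $\ti^*(\dA',\sigma',f')\cong (\dA,\sigma,f)$ is by definition an element of the kernel of the restriction map $\PGO_{(\dA',\sigma',f')}(\dX')\to \PGO_{(\dA,\sigma,f)}(\dX)$. Applying \Cref{lem: infautlie} with $\dG'=\PGO_{(\dA',\sigma',f')}$ (which is smooth, hence flat, over $\dX'$) and $\bG=\PGO_{(\cA,\sigma_0,f_0)}$, we identify this kernel globally with $\rH^0(X,\fpgo_{(\cA,\sigma_0,f_0)}\otimes_{\cO_X}\cJ)$.

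For \ref{lem: aobq} and \ref{lem: atorq}, fix an \'etale cover $\{U_i\to \dX\}$ splitting $(\dA,\sigma,f)$, i.e., on each $U_i$ we have an isomorphism $(\dA|_{U_i},\sigma|_{U_i},f|_{U_i})\cong (\Mat_{2n}(\cO_{U_i}),\sigma_{2n},f_{2n})$. Since $\PGO_{(\cA,\sigma_0,f_0)}$ is smooth, we may lift each $U_i$ to an \'etale $\dX'$-scheme $U_i'$ with $U_i'\times_{\dX'}\dX\cong U_i$; on each $U_i'$ the split quadratic triple $(\Mat_{2n}(\cO_{U_i'}),\sigma_{2n},f_{2n})$ trivially deforms $(\dA|_{U_i},\sigma|_{U_i},f|_{U_i})$. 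The gluing data for $(\dA,\sigma,f)$ defines a Čech $1$-cocycle with values in $\PGO_{(\cA,\sigma_0,f_0)}$ on $\{U_i\to \dX\}$. The obstruction to deforming this cocycle to a cocycle on $\{U_i'\to \dX'\}$ lies in the degree-$2$ Čech cohomology of the kernel sheaf computed in \Cref{lem: infautlie}, giving an obstruction class
\[
\obs(\dA,\sigma,f;\dX'/\dX)\in \check{\rH}^2_{\mathrm{\acute et}}(X,\fpgo_{(\cA,\sigma_0,f_0)}\otimes_{\cO_X}\cJ).
\]
When the obstruction vanishes, any two sets of lifting choices differ by a Čech $1$-cochain, and reduction modulo coboundaries gives a torsor structure for the set of deformations under $\check{\rH}^1_{\mathrm{\acute et}}(X,\fpgo_{(\cA,\sigma_0,f_0)}\otimes_{\cO_X}\cJ)$. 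The computation that this cocycle-theoretic obstruction genuinely classifies deformations is standard and goes exactly as in \Cref{lem: Azdef}\ref{lem: aoba}, \ref{lem: atora}; no new difficulty arises beyond what is already handled by Illusie's theorem.

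Finally, one passes from Čech to derived cohomology and from \'etale to Zariski. Separatedness of $X$ combined with \cite[III Remark 2.16]{MR559531} identifies $\check{\rH}^i_{\mathrm{\acute et}}$ with $\rH^i_{\mathrm{\acute et}}$ for $i=1,2$, and quasi-coherence of $\fpgo_{(\cA,\sigma_0,f_0)}\otimes_{\cO_X}\cJ$ together with \cite[III Remark 3.8]{MR559531} identifies the latter with the Zariski $\rH^i(X,-)$ appearing in the statement. The main technical step to verify carefully is that the Lie algebra sheaf of $\PGO_{(\dA',\sigma',f')}$, restricted to $X$, really is $\fpgo_{(\cA,\sigma_0,f_0)}$; but this is immediate from the functorial description of $\fpgo$ via $\fpgl$ given in \Cref{Lie_Algebras}, together with flatness of $\PGO_{(\dA',\sigma',f')}\to \dX'$ which makes \Cref{Der_reduction_to_X} applicable. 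Once this identification is in hand, the proof runs parallel to \Cref{lem: Azdef} without further subtlety.
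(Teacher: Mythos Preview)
Your proposal is correct and follows essentially the same approach as the paper: the paper's proof simply notes that quadratic triples are \'etale-locally split by \cite[2.7.0.32]{CF2015GroupesClassiques}, that $\PGO_{(\cA,\sigma_0,f_0)}$ is a smooth group scheme with coherent Lie algebra sheaf, and then declares the argument identical to that of \Cref{lem: Azdef}. You have spelled out precisely what that identical argument consists of---\Cref{lem: infautlie} for part \ref{lem: aautq}, the \v{C}ech cocycle deformation for \ref{lem: aobq} and \ref{lem: atorq}, and the passage from \'etale \v{C}ech to Zariski cohomology via \cite[III Remarks 2.16, 3.8]{MR559531}---so your write-up is a faithful elaboration of the paper's sketch.
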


\begin{proof}
Every quadratic triple is \'etale locally split by \cite[Corollaire 2.7.0.32]{CF2015GroupesClassiques}. Moreover, given a quadratic triple like $(\mathcal{A},\sigma_0,f_0)$, the functor of automorphisms of this triple is representable by a smooth group scheme $\mathbf{PGO}_{(\mathcal{A},\sigma_0,f_0)}$. Hence the Lie algebra sheaf $\mathfrak{pgo}_{(\mathcal{A},\sigma_0,f_0)}$ is a coherent $\mathcal{O}_X$-module. The proof is then identical to that of \Cref{lem: Azdef}.
\end{proof}

\subsubsection{Tangent-obstruction theories}\label{tangent_obstruction}
We are going to package some key concepts from deformation theory into a convenient functorial framework using so-called deformation functors. For us, a deformation functor $D:\mathsf{Art}_k\rightarrow \mathsf{Set}$ is a covariant functor from the category $\mathsf{Art}_k$ of Artinian local $k$-algebras such that $D(k)$ consists of a single point. Because $D(k)$ is a single point, we may also view $D$ as a functor into pointed sets, where for each $A\in \mathsf{Art}_k$ the base point of $D(A)$ is the image of the induced map $D(k)\to D(A)$. We tacitly make this identification below.

Here are the deformation functors we will be interested in:

\begin{defn} Let $X$ be a fixed $k$-scheme.
\begin{enumerate}[leftmargin = *]
\item Given a coherent sheaf $\mathcal{F}$ on $X$, the deformation functor associated to $\mathcal{F}$ is
\begin{align*}
\mathrm{Def}_{\mathcal{F}}&:\mathsf{Art}_k\rightarrow \mathsf{Set}, & A&\mapsto \left\{\begin{gathered} \text{Isom.\ classes of deformations}\\ \text{$\mathcal{F}'$ of $\mathcal{F}$ to $X\times_k A$} \end{gathered} \right\}
\end{align*}
with canonical morphisms.
\item If $\mathcal{F}$ is finite locally free, then we write $\mathrm{Def}_{\mathcal{F}, \mathrm{lf}}$ to mean the subfunctor of $\mathrm{Def}_{\mathcal{F}}$ consisting only of those deformations of $\mathcal{F}$ which are locally free.
\item Given an Azumaya algebra $\mathcal{A}$ on $X$, the deformation functor of deformations of $\mathcal{A}$ as an Azumaya algebra is
\begin{align*}
\mathrm{Def}_{\mathcal{A},\mathrm{Az}}&:\mathsf{Art}_k\rightarrow \mathsf{Set}, & A&\mapsto \left\{\begin{gathered} \text{Isom.\ classes of deformations}\\ \text{$(\mathcal{A}',g)$ of $\mathcal{\mathcal{A}}$ to $X\times_k A$} \end{gathered} \right\}
\end{align*}
also with canonical morphisms.
\item Given either an Azumaya algebra with involution $(\mathcal{A},\sigma)$ on $X$, or an Azumaya algebra with quadratic pair $(\mathcal{A},\sigma,f)$ on $X$, we write $\mathrm{Def}_{(\mathcal{A},\sigma)}$ or $\mathrm{Def}_{(\mathcal{A},\sigma,f)}$ for the functor of deformations of the Azumaya algebra with additional structure defined similarly.
\item Lastly, given a smooth morphism $f:Y\rightarrow X$, the functor of deformations of the smooth morphism $f$ is
\begin{align*}
\mathrm{Def}_{f,sm}&:\mathsf{Art}_k\rightarrow \mathsf{Set}, & A&\mapsto \left\{\begin{gathered} \text{Isom.\ classes of deformations}\\ \text{$((Y',j),f')$ of $f$ with target $X\times_k A$}\end{gathered} \right\}
\end{align*}
with canonical morphisms as well.
\end{enumerate}
\end{defn}

A \textit{tangent-obstruction theory} for a deformation functor $\mathrm{D}:\mathsf{Art}_k\rightarrow \mathsf{Set}$ is a pair of finite dimensional $k$-vector spaces $(T_1,T_2)$ such that for all small extensions $(J\subset C',C)$, i.e.\ for all surjections of local Artinian $k$-algebras $C'\rightarrow C$ with kernel $J\subset C'$ satisfying $J\cdot \mathfrak{m}_{C'}=0$, we have an exact sequence of pointed-sets
\[
T_1\otimes_k J\rightarrow D(C')\rightarrow D(C)\rightarrow T_2\otimes_k J.
\]
For every element $\xi\in D(C)$ admitting a lift $\xi'\in D(C')$, the space $T_1\otimes_k J$ is also required to act transitively on the preimage of $\xi$ in $D(C')$. Moreover, in the case $C=k$, this action is required to give the preimage the structure of a $T_1\otimes_k J$-torsor. This data should be compatible with morphisms of small-extensions in a natural way, cf.\ \cite[Remark 6.1.20]{MR2223408}.

Under certain hypotheses, each of the deformation functors above is equipped with a canonical tangent-obstruction theory. Here are the ones we will need:

\begin{thm}\label{thm: tanobs} Let $X$ be a fixed proper $k$-scheme.
\begin{enumerate}[leftmargin = *]
\item\label{thm: lftot} Let $\mathcal{F}$ be a finite locally free $\mathcal{O}_X$-module. Then a tangent-obstruction theory for $\mathrm{Def}_{\mathcal{F},\mathrm{lf}}$ is given by $(\mathrm{H}^1(X,\cEnd(\mathcal{F})),\mathrm{H}^2(X,\cEnd(\mathcal{F})))$.
\item\label{thm: aztot} Let $\mathcal{A}$ be an Azumaya $\mathcal{O}_X$-algebra. Then $(\mathrm{H}^1(X,\mathcal{A}/\mathcal{O}_X),\mathrm{H}^2(X,\mathcal{A}/\mathcal{O}_X))$ is a tangent-obstruction theory for $\mathrm{Def}_{\mathcal{A},\mathrm{Az}}$.
\item\label{thm: qttot} Let $(\mathcal{A},\sigma,f)$ be a quadratic triple on $X$. Then a tangent-obstruction theory for $\mathrm{Def}_{(\mathcal{A},\sigma,f)}$ is given by $(\mathrm{H}^1(X,\mathfrak{pgo}_{(\mathcal{A},\sigma,f)}),\mathrm{H}^2(X,\mathfrak{pgo}_{(\mathcal{A},\sigma,f)}))$.
\item\label{thm: smtot} Let $Y$ be a projective $k$-scheme and $f:Y\rightarrow X$ a smooth morphism. Then a tangent-obstruction theory for $\mathrm{Def}_{f,sm}$ is given by $(\mathrm{H}^1(Y,\mathcal{T}_{Y/X}),\mathrm{H}^2(Y,\mathcal{T}_{Y/X}))$. Here $\mathcal{T}_{Y/X}$ is the relative tangent bundle of $f$.
\end{enumerate}
\end{thm}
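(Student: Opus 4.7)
The plan is to assemble parts (i), (ii), and (iii) directly from the three prior deformation lemmas, \Cref{lem: deflf}, \Cref{lem: Azdef}, and \Cref{lem: defqp}, which already encode both the obstruction class and the torsor structure of lifts for each deformation problem. For part (iv), I would invoke the standard deformation theory of smooth morphisms (e.g.\ \cite[Theorem 2.6]{Illusie}), which follows a directly parallel pattern using the relative tangent sheaf $\mathcal{T}_{Y/X}$.

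The first step is to identify the coefficient sheaves appearing in the previous lemmas with those in the theorem statement. Given a small extension $C'\twoheadrightarrow C$ with kernel $J$ satisfying $\mathfrak{m}_{C'}\cdot J=0$, the ideal $J$ is automatically a $k$-vector space, so the ideal sheaf of $X_C\hookrightarrow X_{C'}$ satisfies $\cJ = \cO_{\dX'}\otimes_{C'} J \cong \cO_X\otimes_k J$ as an $\cO_X$-module on $|X|=|\dX|=|\dX'|$. Consequently, for any coherent $\cO_X$-module $\cE$ there is a canonical isomorphism
\[
\mathrm{H}^i(X,\cE\otimes_{\cO_X}\cJ)\cong \mathrm{H}^i(X,\cE)\otimes_k J,
\]
and properness of $X$ ensures that each $\mathrm{H}^i(X,\cE)$ is finite dimensional over $k$ whenever $\cE$ is coherent. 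This applies in turn to $\cEnd(\cF)$, to $\cA/\cO_X\cong \fpgl_\cA$, to $\fpgo_{(\cA,\sigma,f)}$, and (using projectivity of $Y$ and local freeness of $\mathcal{T}_{Y/X}$) to the relative tangent sheaf.

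Next I would verify the exact sequence of pointed sets $T_1\otimes_k J\to D(C')\to D(C)\to T_2\otimes_k J$ in each case. The obstruction map $D(C)\to T_2\otimes_k J$ sends a deformation $\xi$ over $C$ to its obstruction class from part (1) of the relevant lemma, transported across the identification above; its vanishing characterizes lifts, giving exactness at $D(C)$. The transitive action of $T_1\otimes_k J$ on the fiber over a liftable $\xi$ is exactly part (2) of the relevant lemma, whose torsor statement specializes (in the case $C=k$, where $D(C)$ is a point) to the torsor structure required on all of $D(C')$. Exactness at $D(C')$ is then automatic: the image of $T_1\otimes_k J$ is, by construction, the fiber over the distinguished lift. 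For part (iv), the same pattern applies with the standard deformation lemma for smooth morphisms in place of \Crefrange{lem: deflf}{lem: defqp}.

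The remaining piece is the compatibility of this data with morphisms of small extensions, which is where I expect the main bookkeeping to lie. It reduces to naturality of the obstruction class and of the torsor action under commutative squares of thickenings $(X_C\hookrightarrow X_{C'}) \to (X_D\hookrightarrow X_{D'})$. Since the obstruction class is constructed as a \v{C}ech coboundary of local lifts along a fine enough fppf (or \'etale) cover, this naturality follows formally from the naturality of the boundary map in the long exact cohomology sequence attached to the smooth group scheme controlling the deformation problem ($\GL_n$, $\PGL_\cA$, $\PGO_{(\cA,\sigma,f)}$, or the relative automorphism scheme in part (iv)). No individual step is difficult, but a clean presentation requires isolating this naturality statement once and then invoking it in each of the four cases.
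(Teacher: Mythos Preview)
Your proposal is correct and follows essentially the same approach as the paper: reduce parts (i)--(iii) to \Cref{lem: deflf}, \Cref{lem: Azdef}, and \Cref{lem: defqp}, cite a standard reference for (iv), and then identify $\mathrm{H}^i(X,\cE\otimes_{\cO_X}\cJ)\cong \mathrm{H}^i(X,\cE)\otimes_k J$. The only notable difference is that the paper carries out this last identification via the K\"unneth formula and flat base change along $\Spec(C')\to\Spec(k)$, whereas your direct argument (that $\cJ\cong\cO_X\otimes_k J$ with $J$ a finite-dimensional $k$-vector space) is more elementary and achieves the same conclusion.
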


\begin{proof}
We provide a proof for \ref{thm: lftot}, following from \Cref{lem: deflf}. The proofs of \ref{thm: aztot} and \ref{thm: qttot} are the similar but following from \Cref{lem: Azdef} and \Cref{lem: defqp}, respectively. The proof for \ref{thm: smtot} is given in \cite[Theorem 3.4.8]{MR2247603}.

Let $C' \surj C$ be a surjection of local Artinian $k$-algebras with kernel $J$ such that $\frm_{C'}\cdot J = 0$. From \Cref{lem: deflf}, we know that we have an exact sequence of pointed sets
\[
\mathrm{H}^1(X,\cEnd_{\cO_X}(\cF)\otimes_{\cO_X}\cJ) \to D(C') \to D(C) \to \mathrm{H}^2(X,\cEnd_{\cO_X}(\cF)\otimes_{\cO_X} \cJ).
\]
In order to have a tangent obstruction theory as claimed, we need to ``bring out" the ideal $\cJ$. This can be done using the K\"unneth formula.

We have a diagram where both squares are fiber product diagrams
\[
\begin{tikzcd}
X \ar[r,"g'"] \ar[d,"f'"] & X_{C'} \ar[d,"f"] \ar[r,"\pi'"] & X \ar[d,"f'"] \\
\Spec(k) \ar[r,"g"] & \Spec(C') \ar[r,"\pi"] & \Spec(k)
\end{tikzcd}
\]
and both $\pi'\circ g' = \Id_X$ and $\pi\circ g = \Id_k$. The ideal $J\subseteq C'$ is a $C'$--module, and so can be viewed as an $\cO_{\Spec(C')}$--module. By definition, we have $f^*(J) = \cJ\subseteq \cO_{X_{C'}}$. Furthermore, $g^*(J) = J$ viewed as a $k$--vector space with its natural action coming from the fact that $C'/\frm_{C'} \cong k$. Similarly, $g'^*(\cJ) = \cJ$ viewed naturally as an $\cO_X$--module. Because the diagram commutes, we also have $g'^*(\cJ) = f'^*(J)$. Next, we set $\cF' = \pi'^*(\cF)$. We then have
\[
\cEnd_{\cO_X}(\cF) \cong g'^*(\cEnd_{\cO_{X_{C'}}}(\cF'))
\]
by construction. Thus, we are considering cohomology sets of the form
\[
\mathrm{H}^n(X_{C'} \times_{\Spec(C')} \Spec(k), g'^*(\cEnd_{\cO_{X_{C'}}}(\cF')) \otimes_{\cO_X} f'^*(J))
\]
Now, applying the K\"unneth formula and noting that $H^n(\Spec(k),J)=0$ for $n\geq 1$ since $\Spec(k)$ is affine and $J$ is quasi-coherent (even coherent), we find that this cohomology group is isomorphic to
\begin{align*}
&\mathrm{H}^n(X_{C'}, \cEnd_{\cO_{X_{C'}}}(\cF'))\otimes_{C'} \mathrm{H}^0(\Spec(k),J) \\
= &\mathrm{H}^n(X_{C'}, \cEnd_{\cO_{X_{C'}}}(\cF'))\otimes_{C'} J.
\end{align*}
Lastly, since $\pi \colon \Spec(C') \to \Spec(k)$ is flat, we have by \cite[\href{https://stacks.math.columbia.edu/tag/02KH}{Tag 02KH (2)}]{Stacks} that
\[
\mathrm{H}^n(X_{C'}, \cEnd_{\cO_{X_{C'}}}(\cF')) \cong \mathrm{H}^n(X,\cEnd_{\cO_X}(\cF))\otimes_k C'.
\]
Therefore, over all we have that
\begin{align*}
\mathrm{H}^n(X,\cEnd_{\cO_X}(\cF)\otimes_{\cO_X}\cJ) &\cong \mathrm{H}^n(X_{C'}, \cEnd_{\cO_{X_{C'}}}(\cF'))\otimes_{C'} J \\
&\cong \mathrm{H}^n(X,\cEnd_{\cO_X}(\cF))\otimes_k C' \otimes_{C'} J \\
&\cong \mathrm{H}^n(X,\cEnd_{\cO_X}(\cF))\otimes_k J
\end{align*}
as required, finishing the proof.
\end{proof}

\begin{cor}\label{etale_map_deformations}
Let $f:Y\rightarrow X$ be a finite \'etale map with $X$ a projective $k$-scheme. Let $(A,\mathfrak{m}_A)$ be a local Artinian $k$-algebra with residue field $A/\mathfrak{m}_A\cong k$. 

Then, there is a unique (up to isomorphism) deformation $f':Y_A\rightarrow X_A$ of $f$ as a smooth morphism with target $X_A$.
\end{cor}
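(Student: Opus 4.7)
The plan is to invoke the tangent-obstruction theory for smooth morphisms from \Cref{thm: tanobs}\ref{thm: smtot}. First I verify its hypotheses. Since $f:Y\to X$ is finite and $X$ is projective over $k$, the composition $Y\to X\to \Spec(k)$ is projective, so $Y$ is a projective $k$-scheme. Because $f$ is étale, the sheaf of relative differentials $\Omega_{Y/X}$ vanishes, and therefore the relative tangent bundle satisfies $\mathcal{T}_{Y/X}=\Omega_{Y/X}^{\vee}=0$. Consequently,
\begin{equation*}
\mathrm{H}^{1}(Y,\mathcal{T}_{Y/X}) \;=\; \mathrm{H}^{2}(Y,\mathcal{T}_{Y/X}) \;=\; 0.
\end{equation*}

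The argument then proceeds by induction on the $k$-length of $A$. The base case $A=k$ is immediate, the unique deformation being $f$ itself. For the inductive step, choose a nonzero ideal $J\subset A$ with $\dim_{k}J=1$ and $J\cdot\mathfrak{m}_{A}=0$; such an ideal exists inside the socle of any nonzero Artinian local ring. Then $A\twoheadrightarrow A/J$ is a small extension and $A/J$ has strictly smaller length, so by induction $\mathrm{Def}_{f,sm}(A/J)$ has exactly one element. The tangent-obstruction sequence
\begin{equation*}
\mathrm{H}^{1}(Y,\mathcal{T}_{Y/X})\otimes_{k} J \longrightarrow \mathrm{Def}_{f,sm}(A) \longrightarrow \mathrm{Def}_{f,sm}(A/J) \longrightarrow \mathrm{H}^{2}(Y,\mathcal{T}_{Y/X})\otimes_{k} J
\end{equation*}
collapses because both outer terms are zero: vanishing of the obstruction shows every class in $\mathrm{Def}_{f,sm}(A/J)$ lifts, while transitivity of the trivial group on each fiber of the middle arrow forces uniqueness of the lift up to isomorphism. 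Hence $\mathrm{Def}_{f,sm}(A)$ is also a singleton, completing the induction.

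There is no real obstacle in the argument; the substance is entirely the vanishing $\mathcal{T}_{Y/X}=0$ for an étale morphism, together with the standard reduction of an arbitrary Artinian local $k$-algebra with residue field $k$ to a tower of small extensions via $1$-dimensional socle ideals.
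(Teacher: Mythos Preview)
Your proof is correct and follows the same approach as the paper, whose one-line argument is simply that the relative tangent bundle of an \'etale morphism is trivial. You have merely made explicit the verification that $Y$ is projective and the induction through a tower of small extensions that the tangent-obstruction formalism implicitly provides.
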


\begin{proof}
The relative tangent bundle of $f$ is trivial for an \'etale morphism $f$.
\end{proof}

Some natural transformations between deformation functors are compatible with the cohomological realization of their tangent-obstruction theories. The following is an example of such compatibility, for natural transformations induced by forgetting structure, whose proof is immediate from the constructions of \Crefrange{lem: deflf}{lem: defqp}.

\begin{cor}\label{cor: forg}
Let $\mathcal{A}$ be an Azumaya $\mathcal{O}_X$-algebra. Then there is a natural transformation induced by forgetting the algebra structure on $\mathcal{A}$ \begin{equation}
\mathrm{Def}_{\mathcal{A},\mathrm{Az}}\rightarrow \mathrm{Def}_{\mathcal{A},\mathrm{lf}}.\end{equation}
This natural transformation is compatible with the morphism of tangent-obstruction theories induced by the canonical embedding $\mathcal{A}/\mathcal{O}_X\cong \cDer_{\mathcal{O}_X}(\mathcal{A},\mathcal{A})\subset \cEnd(\mathcal{A})$. So, in particular, the obstruction to deforming an Azumaya algebra maps to the obstruction to deforming the underlying locally free sheaf under these maps.

Similarly, if $\mathcal{A}$ is equipped with a quadratic pair $(\sigma,f)$, then there is also a natural transformation induced by forgetting $(\sigma,f)$, \begin{equation}\mathrm{Def}_{(\mathcal{A},\sigma,f)}\rightarrow \mathrm{Def}_{\mathcal{A},\mathrm{Az}}.
\end{equation} This natural transformation is compatible with the morphism of tangent-obstruction theories induced by the embedding $\mathfrak{pgo}_{(\mathcal{A},\sigma,f)}\subset \mathcal{A}/\mathcal{O}_X$. Similarly, the obstruction to deforming a quadratic triple maps to the obstruction to deforming the underlying Azumaya algebra under these maps.$\hfill\square$
\end{cor}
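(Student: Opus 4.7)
The plan is to define the two natural transformations by simply forgetting structure at the level of objects. Given a local Artinian $k$-algebra $A$ and a deformation $(\cA',g)$ of $\cA$ as an Azumaya algebra, forgetting the multiplication gives a deformation of the underlying finite locally free $\cO_X$-module. Likewise, given a deformation $((\cA',g),\sigma',f')$ of the quadratic triple $(\cA,\sigma,f)$, the pair $(\cA',g)$ is by \Cref{defn: defpairs} already a deformation of $\cA$ as an Azumaya algebra. Functoriality in $A$ and well-definedness on isomorphism classes are immediate from the definitions.

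For the compatibility with the tangent-obstruction theories, I would return to the constructions in the proofs of \Cref{lem: deflf}, \Cref{lem: Azdef}, and \Cref{lem: defqp}. In each case the obstruction class is built by the same recipe: choose an \'etale (or fppf) cover $\{U_i\}$ of $X$ over which the relevant structure splits; use smoothness of the automorphism group scheme $\bG$ (either $\mathbf{GL}_\cF$, $\PGL_\cA$, or $\PGO_{(\cA,\sigma,f)}$) to lift the splittings to $\dX'$; form the \v{C}ech $2$-cocycle measuring the failure of these lifts to agree on triple overlaps; and identify this cocycle, via \Cref{lem: infautlie}, with an element of $\rH^2(X,\Lie_{\bG}\otimes_{\cO_X}\cJ)$. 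The crucial point is that a common splitting cover may be chosen for all three problems: an \'etale cover splitting $(\cA,\sigma,f)$ automatically splits the underlying $\cA$, which in turn splits the underlying locally free $\cO_X$-module. On such a common cover, the cocycle for the more-structured deformation problem, viewed componentwise, is literally the image of the cocycle for the less-structured problem under the inclusion of automorphism group sheaves
\[
\PGO_{(\cA,\sigma,f)}\hookrightarrow \PGL_\cA\hookrightarrow \bAut_{\cO_X}(\cA).
\]
Applying the functorial identifications from \Cref{lem: infautlie} and \eqref{Lie_algebra_dual} to the induced embeddings of Lie algebra sheaves
\[
\fpgo_{(\cA,\sigma,f)}\hookrightarrow \cA/\cO_X\cong \cDer_{\cO_X}(\cA,\cA)\hookrightarrow \cEnd_{\cO_X}(\cA),
\]
this shows that the forgetful natural transformations map obstruction classes to obstruction classes under the $\rH^2$ maps induced by these inclusions.

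The same bookkeeping yields compatibility of the $T_1$-torsor actions: once a deformation exists, the affine structure of the set of deformations is implemented by twisting transition functions by infinitesimal automorphisms, which by \Cref{lem: infautlie} are sections of the corresponding $\rH^1$, and the forgetful maps send the twists valued in the smaller Lie algebra sheaf to the same twists read inside the larger one. No genuinely new computation is required beyond that already performed in \Crefrange{lem: deflf}{lem: defqp}; the only step demanding any care is the simultaneous choice of splitting covers, which is handled by taking a common refinement. The main conceptual obstacle — if any — is simply the bookkeeping needed to check that the identifications of \Cref{lem: infautlie} are natural in the group scheme $\bG$, so that the inclusions of automorphism group schemes induce the claimed inclusions of Lie algebra sheaves on the nose; this naturality is built into the construction of \Cref{infinitesimal_autos_general} and \Cref{Der_reduction_to_X} from the functoriality of the cotangent sheaf at the identity section.
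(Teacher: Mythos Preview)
Your proposal is correct and matches the paper's approach exactly: the paper simply declares the result ``immediate from the constructions of \Crefrange{lem: deflf}{lem: defqp}'' and marks it with a $\square$, and you have spelled out precisely that argument by choosing a common splitting cover and tracking the \v{C}ech cocycles through the inclusions of automorphism group schemes and their Lie algebra sheaves. One phrasing slip to fix: in the sentence about cocycles, the direction should be that the cocycle for the \emph{more}-structured problem maps under the inclusion to the cocycle for the \emph{less}-structured one, not the reverse---but your surrounding sentences make the intended direction clear.
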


The following are other well-known examples of such compatibility.

\begin{lem}\label{lem: detob}
Let $\mathcal{F}$ be a finite locally free sheaf on $X$. Then there is a natural transformation \[\mathrm{Def}_{\mathcal{F},\mathrm{lf}}\rightarrow \mathrm{Def}_{\det(\mathcal{F}),\mathrm{lf}}\] induced by sending a locally free sheaf to its determinant sheaf. This natural transformation is compatible with the morphism on tangent-obstruction theories induced from the trace $tr:\cEnd(\mathcal{F})\rightarrow \mathcal{O}_X$.

There is also a natural transformation \[\mathrm{Def}_{\mathcal{F},\mathrm{lf}}\rightarrow \mathrm{Def}_{\cEnd(\mathcal{F}),\mathrm{Az}}\] induced by sending a locally free sheaf to its endomorphism algebra. This natural transformation is compatible with the morphism on tangent-obstruction theories induced from the quotient $\cEnd(\mathcal{F})\rightarrow \cEnd(\mathcal{F})/\mathcal{O}_X$ by the inclusion of $\mathcal{O}_X\subset \cEnd(\mathcal{F})$ at the identity.
$\hfill\square$
\end{lem}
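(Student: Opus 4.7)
The plan is to establish each natural transformation by formal functoriality and then verify compatibility with the tangent-obstruction theories by tracking the \v{C}ech-cocycle representation of the obstruction class.

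First, I would argue that the natural transformations exist. If $\dF'$ is a deformation of $\dF$ to $\dX'$ as a finite locally free sheaf, then $\det(\dF')$ is an invertible sheaf on $\dX'$ whose pullback along $\ti$ is canonically $\det(\dF)$, since determinants commute with flat pullback. Similarly $\cEnd_{\cO_{\dX'}}(\dF')$ is an Azumaya $\cO_{\dX'}$-algebra whose pullback is canonically $\cEnd_{\cO_\dX}(\dF)$. Both operations preserve isomorphisms and so descend to isomorphism classes, producing the two natural transformations.

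Second, to verify compatibility with the tangent-obstruction theories, I would use the \v{C}ech cocycle construction of the obstruction class already invoked in the proofs of \Cref{lem: deflf} and \Cref{lem: Azdef}. Fix a cover $\{U_i\}$ of $\dX$ that trivializes $\dF$, extend each $\dF|_{U_i}$ to a locally free sheaf $\dF'_i$ on $U'_i$, and lift the transition isomorphisms $\varphi_{ij}\colon \dF|_{U_{ij}}\iso\dF|_{U_{ji}}$ to isomorphisms $\tilde\varphi_{ij}\colon \dF'_i|_{U'_{ij}}\iso \dF'_j|_{U'_{ij}}$, which is possible since $\GL(\dF)$ is smooth. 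The failure of the cocycle condition,
\[
c_{ijk} \;=\; \tilde\varphi_{jk}\tilde\varphi_{ij}\tilde\varphi_{ik}^{-1} - \Id,
\]
is a $2$-cocycle with values in $\cEnd(\cF)\otimes \cJ$ whose class represents the obstruction.

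Third, I would apply $\det$ and $\cEnd$ to this cocycle data and match the results with the corresponding obstructions. For determinants, the key computation is the classical identity $\det(\Id + \varepsilon A) = 1 + \varepsilon\,\mathrm{tr}(A)$ on square-zero sections, which shows that the $2$-cocycle obtained from the lifted transition data $\det(\tilde\varphi_{ij})$ for $\det(\dF)$ is exactly the image of $c_{ijk}$ under the trace map $\cEnd(\cF)\to \cO_X$. For endomorphism algebras, each $\tilde\varphi_{ij}$ induces the inner automorphism $\Inn(\tilde\varphi_{ij})$ of $\cEnd_{\cO_{U'_i}}(\dF'_i)$; since the kernel of $\GL(\dF)\to \PGL_{\cEnd(\dF)}$ on Lie algebras is $\cO_X\cdot \Id$, the resulting $2$-cocycle for $\cEnd(\dF)$ is the image of $c_{ijk}$ under the quotient $\cEnd(\cF)\to \cEnd(\cF)/\cO_X$, as claimed. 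The compatibility on tangent spaces is proved identically using $1$-cocycles in place of $2$-cocycles.

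I do not expect any serious obstacle; the only non-formal input is the identity $\det(\Id+\varepsilon A)=1+\varepsilon\,\mathrm{tr}(A)$ together with the dual observation that the differential of $\Inn$ is the quotient by $\cO_X$. A conceptually cleaner alternative would be to appeal directly to Illusie's framework \cite[Theorem 2.6]{Illusie}: each natural transformation is induced by a homomorphism of automorphism group schemes, namely $\det\colon \GL(\dF)\to \Gm$ and $\Inn\colon \GL(\dF)\to \PGL_{\cEnd(\dF)}$, and Illusie's functorial construction of the obstruction class identifies its image under the induced map with the obstruction for the target deformation problem, the Lie algebra maps being exactly the trace and the quotient by $\cO_X$.
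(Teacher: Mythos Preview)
The paper gives no proof of this lemma at all: the statement ends with a $\hfill\square$ and is treated as well-known, in the same spirit as \Cref{cor: forg} immediately preceding it. Your proposal therefore goes well beyond what the paper does, supplying a correct and standard \v{C}ech-cocycle argument together with the conceptual alternative via Illusie and the group homomorphisms $\det\colon \GL(\dF)\to \Gm$ and $\Inn\colon \GL(\dF)\to \PGL_{\cEnd(\dF)}$. Both routes are exactly how one would justify this result; there is nothing to correct.
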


Less obvious compatibilities come from transformations which can be thought of as being induced by a reduction of structure group.

\begin{prop}\label{prop: quartvan}
Let $\mathcal{A}$ and $\mathcal{B}$ be two Azumaya algebras on $X$. Consider the deformation functor $\mathrm{D}_{\mathcal{A},\mathcal{B}}:=\mathrm{Def}_{\mathcal{A},Az}\times \mathrm{Def}_{\mathcal{B},Az}$. Let \[0\rightarrow J \rightarrow C'\rightarrow C\rightarrow 0\] be a small extension of local Artinian $k$-algebras $(C',\mathfrak{m}_{C'})$ and $(C,\mathfrak{m}_C)$ each having residue field $k$. Then the following diagram is commutative with exact rows
{\footnotesize
\[
\begin{tikzcd}[column sep = tiny, center picture]
\mathrm{H}^1(X,\mathcal{A}/\mathcal{O}_X \oplus \mathcal{B}/\mathcal{O}_X)\arrow{d}\otimes_k J\arrow{r} &[-1.5ex] \mathrm{D}_{\mathcal{A},\mathcal{B}}(C')\arrow{d}\arrow{r} & \mathrm{D}_{\mathcal{A},\mathcal{B}}(C) \arrow{d}\arrow{r} & \mathrm{H}^2(X,\mathcal{A}/\mathcal{O}_X \oplus \mathcal{B}/\mathcal{O}_X)\otimes_k J\arrow{d}\\
 \mathrm{H}^1(X,\mathcal({A}\otimes\mathcal{B})/\mathcal{O}_X)\otimes_k J \arrow{r} & \mathrm{Def}_{\mathcal{A}\otimes\mathcal{B},Az}(C')\arrow{r} & \mathrm{Def}_{\mathcal{A}\otimes\mathcal{B},Az}(C)\arrow{r} & \mathrm{H}^2(X,(\mathcal{A}\otimes \mathcal{B})/\mathcal{O}_X)\otimes_k J
\end{tikzcd}
\]
}
where the vertical arrows on cohomology are induced by the morphism \[\mathcal{A}/\mathcal{O}_X\oplus \mathcal{B}/\mathcal{O}_X\rightarrow (\mathcal{A}\otimes\mathcal{B})/\mathcal{O}_X\quad \quad (a,b)\mapsto a\otimes 1+ 1\otimes b\] and the vertical arrows of deformation functors are canonical.
\end{prop}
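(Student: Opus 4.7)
The plan is to recognize the proposition as naturality of Illusie's tangent-obstruction theory (\Cref{thm: tanobs}\ref{thm: aztot}) with respect to the homomorphism of automorphism group schemes
\[
m\colon \PGL_\cA\times \PGL_\cB \longrightarrow \PGL_{\cA\otimes\cB},\qquad (\varphi,\psi)\mapsto \varphi\otimes\psi,
\]
induced by tensor product of $\cO_X$--algebra automorphisms. The middle square is the easiest: the vertical natural transformation $\mathrm{D}_{\cA,\cB}\to \mathrm{Def}_{\cA\otimes \cB,\mathrm{Az}}$ sends $((\cA',g_\cA),(\cB',g_\cB))$ to $(\cA'\otimes_{\cO_{X_{C'}}}\cB',g_\cA\otimes g_\cB)$, which is well defined because tensor products of flat modules are flat, and compatible with pullback along $X_C\hookrightarrow X_{C'}$ because tensor product commutes with base change.

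Next, I identify the induced map on Lie algebra sheaves. Differentiating $m$ and using the description of $\fpgl_\cA$ via $\overline{a}\mapsto \Inn(1+\varepsilon a)$ from \Cref{lem: liepgl}, a short computation modulo $\varepsilon^2$ shows that
\[
dm\colon (\cA/\cO_X)\oplus (\cB/\cO_X)\longrightarrow (\cA\otimes\cB)/\cO_X
\]
is precisely the map $(a,b)\mapsto a\otimes 1 + 1\otimes b$, since $\Inn(1+\varepsilon a)\otimes \Inn(1+\varepsilon b)= \Inn\bigl(1+\varepsilon(a\otimes 1 + 1\otimes b)\bigr)$ modulo $\varepsilon^2$. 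Taking cohomology in degrees $1$ and $2$ produces the vertical arrows appearing at the left and right ends of the statement.

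Commutativity of the left and right squares then follows from the \v{C}ech description of Illusie's obstruction class and torsor action that underpin \Cref{lem: Azdef}. Choose an \'etale cover $\{U_i\to X\}$ splitting both $\cA$ and $\cB$, so that the corresponding $\PGL$--torsors have transition cocycles $(a_{ij})$ and $(b_{ij})$. By construction of $m$, the transition cocycle for $\cA\otimes \cB$ is $(a_{ij}\otimes b_{ij})$. Fixing local lifts $\tilde a_{ij}, \tilde b_{ij}$ over $C'$, the obstructions are the $2$--cocycles $c^\cA_{ijk}=\tilde a_{ij}\tilde a_{jk}\tilde a_{ik}^{-1}$ and $c^\cB_{ijk}=\tilde b_{ij}\tilde b_{jk}\tilde b_{ik}^{-1}$, which by \Cref{lem: infautlie} represent sections of $\fpgl_\cA\otimes_{\cO_X}\cJ$ and $\fpgl_\cB\otimes_{\cO_X}\cJ$. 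A direct manipulation using $\cJ^2=0$ shows that the analogous \v{C}ech $2$--cocycle for $\cA\otimes \cB$ built from the lifts $\tilde a_{ij}\otimes \tilde b_{ij}$ equals $dm(c^\cA_{ijk},c^\cB_{ijk})=c^\cA_{ijk}\otimes 1+1\otimes c^\cB_{ijk}$, giving commutativity of the rightmost square. The same bookkeeping on $1$--cochains, describing the action of $\mathrm{H}^1$ on isomorphism classes of deformations, gives commutativity of the leftmost square.

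The main obstacle is to keep the \v{C}ech bookkeeping straight while passing between the multiplicative group-theoretic cocycles and their additive Lie-algebra images. All substantive content however is captured by the linearization $dm$ computed above together with the square-zero relation $\cJ^2=0$, which is what converts products of lifted cocycles into sums on the Lie algebra side.
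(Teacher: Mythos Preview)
Your proposal is correct and follows essentially the same approach as the paper: both verify commutativity via the \v{C}ech description of the obstruction class and torsor action, using the relation $\cJ^2=0$ to linearize products of lifted cocycles into sums. Your framing through the group homomorphism $m\colon \PGL_\cA\times\PGL_\cB\to\PGL_{\cA\otimes\cB}$ and its differential $dm$ makes the origin of the map $(a,b)\mapsto a\otimes 1+1\otimes b$ more transparent, but the underlying computation is the same one the paper sketches.
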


\begin{proof}
On the right, one writes the obstruction to deforming $\mathcal{A}\otimes \mathcal{B}$ as an Azumaya algebra in terms of the obstructions to deforming both $\mathcal{A}$ and $\mathcal{B}$ and uses the fact that $J^2=0$. On the left, set $\pi_{C}:X_{C}\rightarrow X$ and $\pi_{C'}:X_{C'}\rightarrow X$ to be the projections and pick deformations $\mathcal{D}_1$ of $\pi_C^*\mathcal{A}$ and $\mathcal{D}_2$ of $\pi_C^*\mathcal{B}$ to $X_{C'}$. 

Simultaneously split $\mathcal{D}_1,\mathcal{D}_2,\pi_{C'}^*\mathcal{A},\pi_{C'}^*\mathcal{B}$ by an \'etale cover of $X_{C'}$. One gets two 1-cocycles on the left by choosing local isomorphisms, one between $\mathcal{D}_1$ and $\pi_{C'}^*\mathcal{A}$ and another between $\mathcal{D}_2$ and $\pi_{C'}^*\mathcal{B}$, on this splitting \'etale cover. Commutativity of the given map can now be checked directly as giving a local isomorphism between $\mathcal{D}_1\otimes \mathcal{D}_2$ and $\pi_{C'}^*(\mathcal{A}\otimes \mathcal{B})$ on this cover, using the fact that $J^2=0$.
\end{proof}

\begin{cor}\label{cor: quartvan}
Let $k$ be a field of characteristic $2$. Suppose that $X$ is a smooth and projective $k$-surface with $K_X\cong \mathcal{O}_X$. Let $\mathcal{A}$ and $\mathcal{B}$ be two Azumaya $\mathcal{O}_X$-algebras of even degree and suppose that $(\mathcal{A}\otimes \mathcal{B})(X)=k$. Then the canonical map \begin{equation}\label{eq: tensorvanish}
\mathrm{H}^2(X,\mathcal{A}/\mathcal{O}_X\oplus \mathcal{B}/\mathcal{O}_X)\rightarrow \mathrm{H}^2(X,(\mathcal{A}\otimes \mathcal{B})/\mathcal{O}_X)\end{equation} is identically zero.
\end{cor}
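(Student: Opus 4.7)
The plan is to invoke Serre duality on the surface $X$. Since $X$ is smooth and projective with $K_X \cong \cO_X$, for any locally free $\cO_X$--module $\cF$ we have a canonical isomorphism $\rmH^2(X,\cF) \cong \rmH^0(X,\cF^\vee)^\vee$. The sheaves $\cA/\cO_X$, $\cB/\cO_X$, and $(\cA\otimes\cB)/\cO_X$ are each locally free, since locally on a trivialization the inclusion $\cO_X \inj \cA$ picking out the scalars is split as a map of $\cO_X$--modules. It therefore suffices to show that the Serre-dual map
\[
\rmH^0\bigl(X,((\cA\otimes\cB)/\cO_X)^\vee\bigr) \to \rmH^0\bigl(X,(\cA/\cO_X)^\vee \oplus (\cB/\cO_X)^\vee\bigr)
\]
is identically zero.

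I would next identify these duals concretely. For any Azumaya algebra $\cA$, the reduced trace bilinear form $(a,b)\mapsto \Trd_\cA(ab)$ is non-degenerate (locally it is the usual trace pairing on matrices, which is perfect in every characteristic), and therefore gives an isomorphism $\cA \iso \cA^\vee$. Dualizing the short exact sequence $0 \to \cO_X \to \cA \to \cA/\cO_X \to 0$ of locally free sheaves and tracing through this identification yields $(\cA/\cO_X)^\vee \cong \Ker(\Trd_\cA \colon \cA \to \cO_X)$, and similarly for $\cB$ and $\cA\otimes \cB$. Using the multiplicativity $\Trd_{\cA\otimes \cB} = \Trd_\cA \otimes \Trd_\cB$ of the reduced trace on tensor products, a direct computation shows that the dual of the map $(\bar a,\bar b) \mapsto \overline{a\otimes 1 + 1\otimes b}$ is the restriction to $\Ker(\Trd_{\cA\otimes\cB})$ of
\[
c \longmapsto \bigl((\mathrm{id}_\cA \otimes \Trd_\cB)(c),\; (\Trd_\cA\otimes \mathrm{id}_\cB)(c)\bigr).
\]

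To finish, I would invoke the hypothesis $(\cA\otimes\cB)(X)=k$, which gives $\rmH^0(X,\Ker(\Trd_{\cA\otimes\cB})) \subseteq k$. Conversely, every scalar $\lambda \in k$ actually lies in this kernel, because $\mathrm{char}(k)=2$ and both $\deg(\cA)$ and $\deg(\cB)$ are even, so
\[
\Trd_{\cA\otimes \cB}(\lambda\cdot 1\otimes 1) = \deg(\cA)\deg(\cB)\,\lambda = 0.
\]
Hence $\rmH^0(X,\Ker(\Trd_{\cA\otimes \cB})) = k$. Evaluating the explicit dual map above on such a scalar gives $\bigl(\lambda\deg(\cB),\,\lambda\deg(\cA)\bigr) = (0,0)$, so the map on global sections is zero. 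By Serre duality, the original map in \eqref{eq: tensorvanish} is therefore zero.

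The main point of care will be bookkeeping: verifying that the identification $(\cA/\cO_X)^\vee \cong \Ker(\Trd_\cA)$ really carries the dual of $(\bar a,\bar b)\mapsto \overline{a\otimes 1 + 1\otimes b}$ to the partial-trace formula above, and that Serre duality is natural with respect to morphisms of locally free sheaves. Both are routine but must be stated explicitly, since the trick that makes everything vanish — the degree of $\cA\otimes \cB$ being zero in $k$ — only becomes visible after passing through this chain of identifications.
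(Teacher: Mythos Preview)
Your proposal is correct and follows essentially the same approach as the paper: both arguments use Serre duality with $K_X\cong\cO_X$ to reduce to the dual map on global sections, identify the relevant $\rmH^0$ via the reduced trace pairing, observe from $(\cA\otimes\cB)(X)=k$ that the source is one-dimensional (generated by the reduced trace, or equivalently by the scalar $1$ under your $\Ker(\Trd)$ identification), and then conclude by the vanishing $\Trd_\cA(1)=\Trd_\cB(1)=0$ forced by the even-degree and characteristic-$2$ hypotheses. The only cosmetic difference is that the paper works directly with the functional $T=\Trd_{\cA\otimes\cB}$ rather than explicitly identifying $(\cA/\cO_X)^\vee\cong\Ker(\Trd_\cA)$ and computing the dual map as a pair of partial traces.
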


\begin{proof}
Since we are assuming $K_X$ is trivial, the map \eqref{eq: tensorvanish} is dual to the map of global sections \begin{equation}\label{eq: redtrtens}((\mathcal{A}\otimes \mathcal{B})/\mathcal{O}_X)^\vee(X)\rightarrow (\mathcal{A}/\mathcal{O}_X)^\vee(X)\oplus (\mathcal{B}/\mathcal{O}_X)^\vee(X)\end{equation} by Serre duality. The surjection $\mathcal{A}\otimes \mathcal{B}\surj (\mathcal{A}\otimes\mathcal{B})/\mathcal{O}_X$ gives an injection on global sections \[((\mathcal{A}\otimes \mathcal{B})/\cO_X)^\vee(X)\subset (\mathcal{A}\otimes \mathcal{B})^\vee(X).\] The reduced trace on the tensor product induces an isomorphism $\mathcal{A}\otimes \mathcal{B}\cong (\mathcal{A}\otimes \mathcal{B})^\vee$. Thus, by our assumption that $(\mathcal{A}\otimes\mathcal{B})(X)=k$, we know $((\cA\otimes\cB)/\cO_X)^\vee(X) \subset k$. Furthermore, the degree of $\cA\otimes\cB$ is also even and so $\cO_X$ lies in the kernel of the reduced trace $\mathrm{trd}_{\cA\otimes\cB}$ since we are in characteristic $2$. Therefore, $((\cA\otimes\cB)/\cO_X)^\vee(X)$ contains a map $T$ induced by the reduced trace. This map is non-zero and therefore generates $((\cA\otimes\cB)/\cO_X)^\vee(X)\cong k$ as a $k$--vector space. 

Now the morphism of \eqref{eq: redtrtens} is defined so that the element $T$ goes to the map $t:\mathcal{A}/\mathcal{O}_X\oplus \mathcal{B}/\mathcal{O}_X\rightarrow \mathcal{O}_X$ defined by \begin{align*}t((a,b))& =\mathrm{trd}_{\mathcal{A}\otimes \mathcal{B}}(a\otimes 1+1\otimes b)\\ &= \mathrm{trd}_{\mathcal{A}}(a)\mathrm{trd}_{\mathcal{B}}(1)+\mathrm{trd}_{\mathcal{A}}(1)\mathrm{trd}_{\mathcal{B}}(b).\end{align*} However, since $2$ divides $\mathrm{gcd}(\mathrm{deg}(\mathcal{A}),\mathrm{deg}(\mathcal{B}))$, we have $\mathrm{trd}_{\mathcal{A}}(1)=\mathrm{trd}_{\mathcal{B}}(1)=0$. Therefore, $T \mapsto 0$ in \eqref{eq: redtrtens} and so the entire map is identically $0$ as claimed.
\end{proof}

We can combine some of the above results with the norm functor from \Cref{norm_functor} to give a description of the deformation functor for some degree $4$ quadratic triples in terms of the obstructions for quaternion algebras.

\begin{lem}\label{deformations_through_stack_morphism}
Let $X$ be a fixed $k$--scheme and let $\cQ_1$ and $\cQ_2$ be two quaternion $\cO_X$--algebras, i.e.\ $\mathrm{deg}(\cQ_1)=\mathrm{deg}(\cQ_2)=2$. Then there is an isomorphism of deformation functors
\begin{equation}\label{lem: normdef}
\Def_{\cQ_1,\Az}\times \Def_{\cQ_2,\Az} \iso \Def_{(\cQ_1\otimes_{\cO_X}\cQ_2,\sigma_N,f_N)} \\
\end{equation}
given by the norm functor of \Cref{norm_functor_equiv}. In particular, for an Artinian $k$-algebra $(A,\frm_A)$ of $\Art_k$, the isomorphism is given by
\begin{align*}
\Def_{\cQ_1,\Az}(A)\times \Def_{\cQ_2,\Az}(A) &\iso \Def_{(\cQ_1\otimes_{\cO_X}\cQ_2,\sigma_N,f_N)}(A) \\
([\cQ_1'],[\cQ_2']) &\mapsto [(\cQ_1'\otimes_{\cO_{X_A}}\cQ_2',\psi_1'\otimes\psi_2',f_N')]
\end{align*}
where $\psi_i'$ is the canonical symplectic involution on $\cQ_i'$ and $f_N'$ is the semi-trace produced by the norm functor.
\end{lem}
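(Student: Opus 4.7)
The plan is to reduce the claim to the stack equivalence $N \colon \fA_1^2 \to \fD_2$ from \Cref{norm_functor_equiv} together with the fact that finite \'etale covers lift uniquely along infinitesimal thickenings. First I would identify the object of $\fA_1^2(X)$ corresponding to the starting quadratic triple. By the explicit description of $N$ on split \'etale extensions given in \Cref{norm_functor}, the triple $(\cQ_1\otimes_{\cO_X}\cQ_2, \sigma_N, f_N)$ is precisely the image under $N$ of the object $(X\sqcup X\to X,\, \cQ)\in \fA_1^2(X)$, where $\cQ$ is the quaternion $\cO_{X\sqcup X}$-algebra whose restrictions to the two components of $X\sqcup X$ are $\cQ_1$ and $\cQ_2$, respectively.

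For $(A,\frm_A)\in \Art_k$ with residue field $k$, applying the equivalence $N$ to the fiber over $X_A$ and using its compatibility with pullback to $X$, one obtains a bijection between isomorphism classes in $\Def_{(\cQ_1\otimes \cQ_2, \sigma_N, f_N)}(A)$ and isomorphism classes of pairs $(T'\to X_A,\, \cQ')\in \fA_1^2(X_A)$ whose pullback to $X$ is isomorphic to $(X\sqcup X\to X,\, \cQ)$. The main decoupling step is then to identify this latter set with $\Def_{\cQ_1,\Az}(A)\times \Def_{\cQ_2,\Az}(A)$. Since $X\sqcup X\to X$ is finite \'etale, the topological invariance of the \'etale site across the square-zero thickening $X\hookrightarrow X_A$, applied inductively along the filtration of $A$ by powers of $\frm_A$ (or equivalently \Cref{etale_map_deformations} when $X$ is projective), implies that the cover $T'\to X_A$ is uniquely isomorphic to $X_A\sqcup X_A\to X_A$. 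Fixing this identification, the data of a quaternion $\cO_{X_A\sqcup X_A}$-algebra $\cQ'$ is exactly the data of a pair $(\cQ_1',\cQ_2')$ of quaternion $\cO_{X_A}$-algebras, and the condition that $\cQ'$ pulls back to $\cQ$ becomes precisely that each $\cQ_i'$ is an Azumaya deformation of $\cQ_i$.

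Finally I would read off the explicit formula by running the split-case description of $N$ from \Cref{norm_functor} on the pair $(X_A\sqcup X_A\to X_A,\, \cQ_1'\sqcup \cQ_2')$: this yields the triple $(\cQ_1'\otimes_{\cO_{X_A}}\cQ_2',\, \psi_1'\otimes\psi_2',\, f_N')$, recovering the displayed formula. Functoriality of the resulting isomorphism of deformation functors in morphisms of $\Art_k$ is inherited from functoriality of $N$ and of pullback. The main (if minor) obstacle is the passage from an abstract isomorphism class of quadratic triple deformations, whose representative need not a priori be of tensor-product form, to a concrete pair of quaternion deformations; this is exactly governed by the unique lift of the split \'etale cover combined with the essential surjectivity of $N$.
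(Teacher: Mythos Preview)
Your proposal is correct and follows essentially the same approach as the paper's proof: both arguments hinge on the stack equivalence $N$ of \Cref{norm_functor_equiv}, the unique lift of the split degree~$2$ \'etale cover along the infinitesimal thickening (via \Cref{etale_map_deformations} or, as you note, topological invariance of the \'etale site), and the explicit split-case description of $N$. The only cosmetic difference is that the paper separates the argument into an explicit injectivity step (fullness of $N$) and a surjectivity step (essential surjectivity of $N$ plus uniqueness of the cover), whereas you package both directions at once as a bijection of isomorphism classes induced by the equivalence on fibers compatible with pullback; the content is the same.
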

\begin{proof} Let $(A,\frm_A)$ be an object from $\Art_k$ and consider two deformations $\cQ_1'$ and $\cQ_2'$ on $X_A = X\times_k A$ of $\cQ_1$ and $\cQ_2$ respectively. Let $\cQ'$ be the associated quaternion algebra on $X_A \sqcup X_A$. Because the norm is a morphism of stacks and thus respects base change, it is clear that $N(X_A\sqcup X_A \to X_A,\cQ')$ on $X_A$ restricts down to
\[
N(X\sqcup X \to X,\cQ'|_X) = (\cQ_1\otimes_{\cO_X}\cQ_2,\psi_1\otimes \psi_2,f_N).
\]
Hence
\[
N(X_A\sqcup X_A \to X_A,\cQ') = (\cQ_1'\otimes_{\cO_{X_A}}\cQ_2',\psi_1'\otimes\psi_2',f_N')
\]
is an element of $\Def_{(\cQ_1\otimes_{\cO_X}\cQ_2,\sigma_N,f_N)}(A)$.

The natural transformation \eqref{lem: normdef} is injective. Indeed, because the norm is an equivalence of categories, in particular it is full, any isomorphism between the resulting quadratic pairs would correspond to an isomorphism between the starting quaternion algebras. Thus our proposed map is injective on isomorphism classes. 

To see that \eqref{lem: normdef} is surjective, we use the essential surjectivity of the norm. Given any deformation $(\cA,\sigma,f)$ on $X_A$ of $(\cQ_1\otimes_{\cO_X}\cQ_2,\psi_1\otimes \psi_2,f_N)$, the object $(X_A,(\cA,\sigma,f))$ in $\fD_2(X_A)$ must be isomorphic to the norm of some $(T \to X_A,\cB)$ from $\fA_1^2(X_A)$, i.e.\ $T \to X_A$ is some degree $2$ \'etale cover and $\cB$ is a quaternion algebra on $T$. Since our equivalence of stacks respects base change, the pullback of $(T \to X_A,\cB)$ to $X$ must be isomorphic to $(X\sqcup X \to X,\cQ)$. 

In particular, this means that the morphism $T \to X_A$ is a deformation of the split degree $2$ \'etale cover $X\sqcup X \to X$. However, by \Cref{etale_map_deformations} such deformations are unique up to isomorphism and clearly $X_A\sqcup X_A \to X_A$ is such a deformation. Therefore, $\cB$ is a quaternion algebra on $X_A\sqcup X_A$ and thus corresponds to two quaternion algebras $\cB_1$ and $\cB_2$ on $X_A$. Lastly, to respect base change these must each be deformations of the algebras $\cQ_1$ and $\cQ_2$ respectively. Hence, we have that
\[
(X_A,(\cA,\sigma,f)) \cong N(T\to X_A,\cB) \cong (X_A,(\cB_1\otimes_{\cO_{X_A}}\cB_2,\psi_{\cB_1}\otimes\psi_{\cB_2},f_\cB))
\]
meaning that $(\cA,\sigma,f)$ is (up to isomorphism) of the correct form, and thus our map on deformation functors is surjective. This completes the proof.
\end{proof}

\subsection{Igusa surfaces}\label{Igusa_surfaces}
Our main example in this paper exists on an Igusa surface. An Igusa surface is the quotient of a product of two elliptic curves, at least one of which is ordinary, over a field of characteristic $2$ by a certain free action of $\mathbb{Z}/2\mathbb{Z}$. Throughout this subsection we work over a fixed algebraically closed base field $k$ of characteristic $2$. The assumption that $k$ is algebraically closed is not strictly necessary, but it simplifies some of the discussion.

Let $E_1$ be an elliptic curve over $k$. Let $E_2$ be an ordinary elliptic curve over $k$, i.e.,\ the $2$-torsion subscheme $E_2[2]$ of $E_2$ admits an isomorphism $E_2[2]\cong \mu_2\times \mathbb{Z}/2\mathbb{Z}$. Let $Y=E_1\times_k E_2$ and consider the action of $\mathbb{Z}/2\mathbb{Z}$ on $Y$ defined by sending a point $(a,b)$ to $(-a,b+t)$ where $t\in E_2[2](k)$ is the unique nontrivial $2$ torsion $k$-point. This is a free action of $\mathbb{Z}/2\mathbb{Z}$ on $Y$, so there is a smooth and projective quotient $\pi:Y\rightarrow X$. The surface $X$ is called an \textit{Igusa surface}, named after \cite{MR74085}.

\subsubsection{Cohomological invariants}
The cotangent bundle of the Igusa surface $X$ admits an isomorphism $\Omega_X^1\cong \mathcal{O}_X\oplus \mathcal{O}_X$. Actually, any global section of the cotangent bundle $\Omega_{Y}^1\cong \Omega_{E_1}^1\boxtimes \Omega_{E_2}^1$ is invariant for the action of $\mathbb{Z}/2\mathbb{Z}$ and so descends to a global section of $\Omega_X^1$. It follows that $K_X\cong \mathcal{O}_X$.

The Hodge numbers of $X$ are easy to determine, cf.\ \cite[Example 6, p.\ 22]{MR217090}. They are:
\[ h^{0,0}=h^{2,2}=1,\quad h^{0,1}=h^{1,0}=h^{1,2}=h^{2,1}=2, \quad h^{0,2}=h^{2,0}=1,\quad h^{1,1}=4.\]
Of these, we point out $h^{0,1}=\dim_k\mathrm{H}^1(X,\mathcal{O}_X)=2$ and $h^{0,2}=\dim_k\mathrm{H}^2(X,\mathcal{O}_X)=1$.

\subsubsection{Nonreduced Picard scheme}
Since $X$ is smooth, irreducible, projective, and defined over an algebraically closed field $k$, the Picard scheme $\mathbf{Pic}_{X/k}$ exists and represents the relative Picard functor \cite[Corollary 4.18.3]{MR3287693}. The Picard scheme $\mathbf{Pic}_{X/k}$ is locally of finite type and the connected component at the identity $\mathbf{Pic}_{X/k}^0$ is an open and closed projective subgroup scheme \cite[Proposition 5.3]{MR3287693}. 

In \cite{MR74085}, Igusa shows that $\mathbf{Pic}^0_{X/k}$ is one dimensional; since $\dim \mathrm{H}^1(X,\mathcal{O}_X)=2$ it follows that $\mathbf{Pic}^0_{X/k}$ is not reduced. For an explicit determination of the Picard scheme of $X$ we refer to \cite[Example 2.2]{MR512270}. The result depends on the curve $E_1$. When $E_1$ is ordinary, one has that $\mathbf{Pic}^0_{X/k}\cong \mu_2\times E_2/\langle t \rangle$.

As a consequence of the nonreduced structure of the Picard scheme of $X$, we get a nonvanishing statement on obstructions to deforming certain line bundles on $X$.

\begin{prop}\label{prop: smallext}
Let $X$ be an Igusa surface over an algebraically closed field $k$ of characteristic $2$. Then for any line bundle $\mathcal{L}$ on $X$, there exists a small extension \[0\rightarrow J\rightarrow C'\rightarrow C\rightarrow 0,\] of local Artinian $k$-algebras $(C',\mathfrak{m}_{C'})$ and $(C,\mathfrak{m}_C)$ both with residue field $k$, and a deformation $\mathcal{L}'$ of $\mathcal{L}$ to $X_C$ so that $ob(\mathcal{L}'; X_{C'}/X_C)\neq 0$.
\end{prop}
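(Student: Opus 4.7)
The plan is to deduce the statement from the nonreducedness of $\mathbf{Pic}^0_{X/k}$ at the identity, combined with the fact that the local ring of the Picard scheme pro-represents the deformation functor of a line bundle. Since $\mathbf{Pic}_{X/k}$ is a group scheme, translation by $[\mathcal{L}^{-1}]$ is an automorphism of $\mathbf{Pic}_{X/k}$ sending $[\mathcal{L}]$ to the identity $e$; this gives an isomorphism of deformation functors $\Def_{\mathcal{L},\mathrm{lf}} \cong \Def_{\mathcal{O}_X,\mathrm{lf}}$. It therefore suffices to produce the required small extension and obstructed deformation for $\mathcal{L} = \mathcal{O}_X$ and then tensor the result with the pullback of $\mathcal{L}$ to $X_C$ to recover the general case.

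The next step is to identify $\Def_{\mathcal{O}_X,\mathrm{lf}}$ restricted to $\Art_k$ with the functor pro-represented by the complete local ring $\hat{R} := \hat{\mathcal{O}}_{\mathbf{Pic}^0_{X/k},\,e}$. The embedding dimension of $\hat{R}$ equals the tangent-space dimension of $\Def_{\mathcal{O}_X,\mathrm{lf}}$, which by \Cref{thm: tanobs}\ref{thm: lftot} is $\dim_k\mathrm{H}^1(X,\mathcal{O}_X) = h^{0,1} = 2$. On the other hand, Igusa's computation (recalled just above the proposition) gives $\dim \mathbf{Pic}^0_{X/k} = 1$, so $\dim \hat{R} = 1 < 2$. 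Hence $\hat{R}$ is not a regular local $k$-algebra, and in particular it is not formally smooth over $k$.

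Finally, by the standard characterization (Grothendieck's formal smoothness criterion, equivalently Schlessinger's condition H$_4$) a pro-representable functor $F = \mathrm{Hom}(\hat{R},-)$ has the property that $F(C') \twoheadrightarrow F(C)$ is surjective for every small extension $C' \twoheadrightarrow C$ if and only if $\hat{R}$ is formally smooth. Since $\hat{R}$ fails this condition, there exist a small extension $0 \to J \to C' \to C \to 0$ and an element $\mathcal{L}'_0 \in \Def_{\mathcal{O}_X,\mathrm{lf}}(C)$ which does not lift to $C'$. By \Cref{lem: deflf}, this exactly means $ob(\mathcal{L}'_0;X_{C'}/X_C) \in \mathrm{H}^2(X,\mathcal{O}_X)\otimes_k J$ is nonzero. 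Tensoring $\mathcal{L}'_0$ with the pullback of $\mathcal{L}$ to $X_C$ yields the required deformation $\mathcal{L}'$ of $\mathcal{L}$ with $ob(\mathcal{L}';X_{C'}/X_C) \neq 0$.

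The only potentially delicate point is the pro-representability step together with the correct invocation of the smoothness criterion; once in place the proof is formal, with all geometric input packaged in the two numbers $h^{0,1}(X) = 2$ and $\dim \mathbf{Pic}^0_{X/k} = 1$ recalled earlier. I do not expect any serious obstacle beyond citing the appropriate references.
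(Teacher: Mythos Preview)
Your proposal is correct and follows essentially the same strategy as the paper: both arguments pro-represent $\Def_{\mathcal{L},\mathrm{lf}}$ by the completed local ring $\hat{R}$ of $\mathbf{Pic}_{X/k}$ at $[\mathcal{L}]$ and then exploit the gap between the tangent dimension $h^{0,1}=2$ and the Krull dimension $\dim\hat{R}=1$. The only cosmetic differences are that the paper works directly at $[\mathcal{L}]$ rather than translating to the identity, and that it packages the conclusion via the inequality $\dim\hat{R}\geq \dim_k T_1-\dim_k T_2$ from \cite[Corollary~6.2.5]{MR2223408} (forcing any obstruction space $T_2$ to be at least one-dimensional) rather than invoking the formal smoothness criterion explicitly; your reduction-to-$\mathcal{O}_X$ step and the final tensoring are correct but not needed if one works at $[\mathcal{L}]$ from the start.
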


\begin{proof}
Since $\mathbf{Pic}_{X/k}$ exists and represents the relative Picard functor $\mathrm{Pic}_{X/k}$, we have an inclusion of subfunctors $\mathrm{Def}_{\mathcal{L},\mathrm{lf}}\subset \mathrm{Pic}_{X/k}$ with the latter restricted to $\mathsf{Art}_k$. All of this is to say that $\mathrm{Def}_{\mathcal{L},\mathrm{lf}}$ is pro-representable by the completion $\hat{R}$ of the local ring $R$ of $\mathbf{Pic}_{X/k}$ at the point $[\mathcal{L}]\in\mathbf{Pic}_{X/k}(k)$.

By \cite[Corollary 6.2.5]{MR2223408}, for any tangent-obstruction theory $(T_1,T_2)$ for $\mathrm{Def}_{\mathcal{L},\mathrm{lf}}$ there is an inequality on the Krull dimension of $\hat{R}$ as follows \[1=\mathrm{dim}(\hat{R})\geq \dim_k T_1-\dim_k T_2.\] Since the space $T_1$ is uniquely determined by $\mathrm{Def}_{\mathcal{L},\mathrm{lf}}$ by \cite[Proposition 6.1.23]{MR2223408}, and since $\mathrm{dim}_k(T_1)=2$, there is no tangent-obstruction theory with $T_2$ a proper subset of $\mathrm{H}^2(X,\mathcal{O}_X)$. The claim of the proposition follows.
\end{proof}

\begin{rem}\label{rem: explicitob}
Here is an explicit example of the setting that is stated to exist in \Cref{prop: smallext} for any particular line bundle $\mathcal{L}$ on $X$. Suppose that $Y=E_1\times E_2$ with both $E_1$ and $E_2$ ordinary so that $\mathbf{Pic}_{X/k}^0\cong \mu_2\times E_2/\langle t\rangle$ for the unique nontrivial point $t\in E_2[2](k)$. Note that the point $s\in \mathbf{Pic}_{X/k}(k)$ which represents $\mathcal{L}$ defines an isomorphism between the connected component $\mathcal{P}_s\subset \mathbf{Pic}_{X/k}$ containing $s$ and the connected component of the identity $\mathbf{Pic}^0_{X/k}$.

We may then view $s\in \mathbf{Pic}_{X/k}(k)$ as a map $s\colon \Spec(k) \to \mu_2\times E_2/\langle t \rangle$ which, since $\Spec(k)$ is reduced and $\mu_2\cong \mathrm{Spec}(k[x]/(x-1)^2)$ is non-reduced in characteristic $2$, factors as
\[
s\colon \Spec(k) \xrightarrow{\tilde{s}} E_2/\langle t \rangle \to \mu_2 \times E_2/\langle t \rangle
\]
for a map $\tilde{s}:\mathrm{Spec}(k)\rightarrow E_2/\langle t\rangle$. Consider the $\mu_2$-point
\[
s_2:\mu_2\cong \mu_2\times_k \Spec(k) \xrightarrow{\Id \times \tilde{s}} \mu_2\times E_2/\langle t\rangle.
\]
If we identify $s_2\in \mathbf{Pic}_{X/k}(\mu_2)=\mathrm{Pic}(X\times_k \mu_2)$ with the corresponding line bundle $\mathcal{L}_2$ that it defines, then we claim that $\mathcal{L}_2$ has obstructed deformations to the thickening $\tau_3=\mathrm{Spec}(k[x]/(x-1)^3)$ under the inclusion $i:\mu_2\subset \tau_3$. 

Indeed, any $\tau_3$-point $s_3\in\mathbf{Pic}_{X/k}(\tau_3)=\mathrm{Pic}(X\times_k \tau_3)$ lifting the point $s_2$ induces an endomorphism (on projection to $\mu_2$) \[\mu_2\xrightarrow{i} \tau_3\rightarrow \mu_2\] which is required to be the identity. But, of course, this is impossible and so the obstruction $ob(\mathcal{L}_2;X_{\tau_3}/X_{\mu_2})\neq 0$ is nonvanishing.

More generally, since $\mu_2\cong k[\epsilon]$ in characteristic $2$, any point $s'\in \mathbf{Pic}_{X/k}(\mu_2)$ lifting $s$ naturally defines a vector in the tangent space \[\mathrm{H}^1(X,\mathcal{O}_X)\cong \mathrm{T}_s\mathbf{Pic}_{X/k}\cong \mathrm{T}_{x}\mu_2\oplus \mathrm{T}_{y} E_2/\langle t\rangle\] where $s=(x,y)$. The same reasoning shows that any line bundle on $X_{\mu_2}$ defined by a $\mu_2$-point corresponding to a vector with nonzero $T_x\mu_2$ component must therefore also have obstructed deformations to $\tau_3$.
\end{rem}

\section{An intermediate obstruction and Lie algebra sheaves}\label{intermediate_obstruction}
 Fix a scheme $X$. Let $\mathcal{A}$ be an Azumaya $\mathcal{O}_X$-algebra. Assume that there exists a locally quadratic orthogonal involution $\sigma$ on $\mathcal{A}$. In \cite{GNR2023AzumayaObstructions}, the authors introduce \textit{strong} and \textit{weak} obstructions for the existence of a semi-trace $f$ making $(\sigma,f)$ a quadratic pair on $\mathcal{A}$. In this section, we reinterpret these obstructions and introduce a new obstruction, the \textit{intermediate obstruction}, which exists only when $X$ is purely of characteristic $2$.

\subsection{Extensions and their associated obstructions} The starting point for our observations is the extension \begin{equation}\label{eq: skewsymd} 0\rightarrow \cSkew_{\mathcal{A},\sigma}\rightarrow \mathcal{A}\xrightarrow{1+\sigma} \cSymd_{\mathcal{A},\sigma}\rightarrow 0.\end{equation} The extension \eqref{eq: skewsymd} defines a canonical extension class in $\mathrm{Ext}^1(\cSymd_{\mathcal{A},\sigma},\cSkew_{\mathcal{A},\sigma})$. We will show that this class is responsible for both the strong and weak obstructions introduced in \cite{GNR2023AzumayaObstructions}.

\begin{prop}\label{prop: strong}
For any scheme $X$ and for any Azumaya algebra $(\mathcal{A},\sigma)$ with locally quadratic orthogonal involution on $X$, there exists a canonical locally free subsheaf $\mathcal{E}\subset \mathcal{A}$ with the following properties:
\begin{enumerate}
\item\label{it: quo} $\mathcal{A}/\mathcal{E}\cong \cSymd_{\mathcal{A},\sigma}/\mathcal{O}_X$;
\item\label{it: ext} $\mathcal{E}$ fits into an extension \[
0\rightarrow \cSkew_{\mathcal{A},\sigma}\rightarrow \mathcal{E}\rightarrow \mathcal{O}_X\rightarrow 0;\]
\item\label{it: strong} the class $\alpha(\mathcal{E})\in \mathrm{Ext}^1(\mathcal{O}_X, \cSkew_{\mathcal{A},\sigma})$ determined by the extension above is mapped to the strong obstruction $\Omega(\mathcal{A},\sigma)$ under the canonical isomorphism $ \mathrm{Ext}^1(\mathcal{O}_X, \cSkew_{\mathcal{A},\sigma})\cong \mathrm{H}^1(X,\cSkew_{\mathcal{A},\sigma})$.
\end{enumerate}
\end{prop}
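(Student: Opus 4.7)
The plan is to construct $\mathcal{E}$ as the preimage of $\mathcal{O}_X \subset \cSymd_{\mathcal{A},\sigma}$ under the map $1+\sigma\colon \mathcal{A} \to \cSymd_{\mathcal{A},\sigma}$. Here $\mathcal{O}_X \subset \cSymd_{\mathcal{A},\sigma}$ is a well-defined inclusion because $\sigma$ is locally quadratic, i.e., $1\in \cSymd_{\mathcal{A},\sigma}(X)$, and the image of $\mathcal{O}_X$ in $\mathcal{A}$ at the identity sits in $\cSymd_{\mathcal{A},\sigma}$. Concretely, $\mathcal{E}$ is the kernel sheaf of the composite $\mathcal{A}\xrightarrow{1+\sigma}\cSymd_{\mathcal{A},\sigma}\twoheadrightarrow \cSymd_{\mathcal{A},\sigma}/\mathcal{O}_X$, so I get a sequence $\mathcal{E}\hookrightarrow \mathcal{A}\twoheadrightarrow \cSymd_{\mathcal{A},\sigma}/\mathcal{O}_X$ which immediately gives property \ref{it: quo}.

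The next step is to verify property \ref{it: ext}. Restricting the surjection $1+\sigma\colon \mathcal{A}\to \cSymd_{\mathcal{A},\sigma}$ to the subsheaf $\mathcal{E}$, the image is contained in $\mathcal{O}_X$ by definition of $\mathcal{E}$, the kernel is $\cSkew_{\mathcal{A},\sigma}$, and surjectivity onto $\mathcal{O}_X$ follows because the locally quadratic hypothesis together with \cite[Lemma 3.15]{GNR2023AzumayaObstructions} (cited in \Cref{quadratic_triples}) guarantees that, locally on $X$, there exist sections $\ell\in\mathcal{A}$ with $\ell+\sigma(\ell)=1$, which are then sections of $\mathcal{E}$ mapping to $1\in\mathcal{O}_X$. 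Local freeness of $\mathcal{E}$ then follows from the short exact sequence $0\to \cSkew_{\mathcal{A},\sigma}\to \mathcal{E}\to \mathcal{O}_X\to 0$: the sheaf $\cSkew_{\mathcal{A},\sigma}$ is locally free (as an fppf-twisted form of the kernel of the symmetrization map on a matrix algebra), and the sequence splits on sufficiently small affine opens because $\mathcal{O}_X$ is locally free of rank one, so $\mathcal{E}$ is locally a direct sum of finite locally free sheaves.

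The main work is in part \ref{it: strong}: comparing the extension class $\alpha(\mathcal{E})$ to the strong obstruction $\Omega(\mathcal{A},\sigma)$. Both classes are obtained as images of $1$ under connecting maps. By construction, $\mathcal{E}$ sits in a morphism of short exact sequences
\[
\begin{tikzcd}
0 \ar[r] & \cSkew_{\mathcal{A},\sigma} \ar[r] \ar[d,equals] & \mathcal{E} \ar[r,"1+\sigma"] \ar[d,hook] & \mathcal{O}_X \ar[r] \ar[d,hook] & 0 \\
0 \ar[r] & \cSkew_{\mathcal{A},\sigma} \ar[r] & \mathcal{A} \ar[r,"1+\sigma"] & \cSymd_{\mathcal{A},\sigma} \ar[r] & 0
\end{tikzcd}
\]
in which the right vertical map sends $1\in \mathcal{O}_X(X)$ to $1\in \cSymd_{\mathcal{A},\sigma}(X)$. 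Under the standard identification of the Yoneda $\mathrm{Ext}^1$ class of the top row with its connecting map on cohomology applied to the global section $1$, the class $\alpha(\mathcal{E})$ is the image of $1\in\mathcal{O}_X(X)$ in $\mathrm{H}^1(X,\cSkew_{\mathcal{A},\sigma})$. By naturality of the long exact sequence of cohomology, this image coincides with the image of $1\in\cSymd_{\mathcal{A},\sigma}(X)$ via the bottom row's connecting map, which is $\Omega(\mathcal{A},\sigma)$ by its definition. Hence $\alpha(\mathcal{E})=\Omega(\mathcal{A},\sigma)$.

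The main obstacle I expect is technical care around the image sheaf $\cSymd_{\mathcal{A},\sigma}$, because it is defined via sheafification and does not have a pointwise description in terms of sections. In particular, surjectivity of $\mathcal{E}\to \mathcal{O}_X$ and the identification $\mathcal{A}/\mathcal{E} \cong \cSymd_{\mathcal{A},\sigma}/\mathcal{O}_X$ must be checked at the level of sheaves (or stalks), rather than on global sections, which is where the locally quadratic hypothesis is used essentially.
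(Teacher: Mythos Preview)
Your proposal is correct and follows essentially the same approach as the paper: the paper defines $\mathcal{E}$ as the fiber product of $i\colon\mathcal{O}_X\hookrightarrow\cSymd_{\mathcal{A},\sigma}$ and $1+\sigma\colon\mathcal{A}\to\cSymd_{\mathcal{A},\sigma}$, which is exactly your preimage description, and then deduces (iii) by the same naturality-of-connecting-maps argument (phrased there as ``apply $\mathrm{Hom}(\mathcal{O}_X,-)$''). Your write-up is slightly more explicit about surjectivity of $\mathcal{E}\to\mathcal{O}_X$ and local freeness of $\mathcal{E}$, which the paper leaves implicit.
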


\begin{proof}
Since $\sigma$ is locally quadratic, there exists a morphism $i:\mathcal{O}_X\rightarrow \cSymd_{\mathcal{A},\sigma}$ defined by sending $1\mapsto 1_{\cA}$. Let $\mathcal{E}$ be the fiber product sheaf associated to the pair of morphisms $(i,1+\sigma)$ where $1+\sigma:\mathcal{A}\rightarrow \cSymd_{\mathcal{A},\sigma}$ is the induced quotient. Then, there is a commutative diagram with exact rows as below
\[
\begin{tikzcd}
0\arrow{r} & \cSkew_{\mathcal{A},\sigma}\arrow[equals]{d}\arrow{r} & \mathcal{E} \arrow{d}\arrow{r} & \mathcal{O}_X\arrow["i"]{d}\arrow{r} & 0\\
0\arrow{r} & \cSkew_{\mathcal{A},\sigma}\arrow{r} & \mathcal{A}\arrow["1+\sigma"]{r} & \cSymd_{\mathcal{A},\sigma}\arrow{r} & 0.
\end{tikzcd}
\]
Property \ref{it: quo} now follows from the snake lemma and property \ref{it: ext} is clear. To check property \ref{it: strong}, one applies $\mathrm{Hom}(\mathcal{O}_X,-)$ and uses the fundamental constructions of homological algebra.
\end{proof}

Similarly, passing to the quotient $\cSkew_{\mathcal{A},\sigma}/\cAlt_{\mathcal{A},\sigma}$ in the second component of $\mathrm{Ext}^1(\mathcal{O}_X,\cSkew_{\mathcal{A},\sigma})$ determines the weak obstruction $\omega(\mathcal{A},\sigma)$.

\subsection{An intermediate obstruction in characteristic 2} Assume now that $k$ is a field of characteristic $2$ and that $X$ is a scheme over $k$. In this case, we have $\cO_X \subset \cSkew_{\cA,\sigma} = \cSym_{\cA,\sigma}$ and that $1\in \cSymd_{\mathcal{A},\sigma}(X)=\cAlt_{\mathcal{A},\sigma}(X)$ since $\sigma$ is locally quadratic. This allows us to consider an intermediary quotient \[\cSkew_{\mathcal{A},\sigma}\twoheadrightarrow \cSkew_{\mathcal{A},\sigma}/\mathcal{O}_X\twoheadrightarrow \cSkew_{\mathcal{A},\sigma}/\cAlt_{\mathcal{A},\sigma}.\] Correspondingly, we introduce:

\begin{defn}
Consider the short exact sequence of sheaves \[0\rightarrow \cSkew_{\mathcal{A},\sigma}/\mathcal{O}_X\rightarrow \mathcal{A}/\mathcal{O}_X\xrightarrow{\Id +\sigma} \cSymd_{\mathcal{A},\sigma}\rightarrow 0.\] This gives the following portion of the long exact cohomology sequence \[\mathcal{A}/\mathcal{O}_X(X)\rightarrow \cSymd_{\mathcal{A},\sigma}(X)\xrightarrow{\delta} \mathrm{H}^1(X,\cSkew_{\mathcal{A},\sigma}/\mathcal{O}_X).\] We call $\Omega'(\mathcal{A},\sigma)=\delta(1)$ the intermediate obstruction associated to the pair $(\mathcal{A},\sigma)$.
\end{defn}

In a similar spirit to \Cref{prop: strong}, we have the following characterization of the intermediate obstruction:
\begin{lem}\label{lem: inter}
Fix a base field $k$ of characteristic $2$. Let $X$ be a scheme over $k$. Suppose that $(\mathcal{A},\sigma)$ is an Azumaya algebra with locally quadratic orthogonal involution over $X$. Then there exists a canonical locally free subsheaf $\mathcal{E}'\subset \mathcal{A}/\mathcal{O}_X$ with the following properties: 

\begin{enumerate}
\item $\mathcal{E}'\cong \mathcal{E}/\mathcal{O}_X$;
\item $(\mathcal{A}/\mathcal{O}_X)/\mathcal{E}'\cong \cAlt_{\mathcal{A},\sigma}/\mathcal{O}_X$;
\item $\mathcal{E}'$ fits into a canonical extension \[
0\rightarrow \cSkew_{\mathcal{A},\sigma}/\mathcal{O}_X\rightarrow \mathcal{E}'\rightarrow \mathcal{O}_X\rightarrow 0;\]
\item the class $\alpha(\mathcal{E}')\in \mathrm{Ext}^1(\mathcal{O}_X, \cSkew_{\mathcal{A},\sigma}/\mathcal{O}_X)$ determined by the extension above is mapped to the intermediate obstruction $\Omega'(\mathcal{A},\sigma)$ under the isomorphism $ \mathrm{Ext}^1(\mathcal{O}_X,\cSkew_{\mathcal{A},\sigma}/\mathcal{O}_X)\cong \mathrm{H}^1(X,\cSkew_{\mathcal{A},\sigma}/\mathcal{O}_X)$.
\end{enumerate}
\end{lem}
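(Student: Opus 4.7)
The plan is to mirror the construction of $\mathcal{E}$ from \Cref{prop: strong}, working one level down inside the quotient $\mathcal{A}/\mathcal{O}_X$. The essential observation enabling this is the characteristic $2$ identity: for any local section $\lambda$ of $\mathcal{O}_X \subset \mathcal{A}$, we have $(\Id+\sigma)(\lambda) = 2\lambda = 0$, so $\mathcal{O}_X$ lies in the kernel of $1+\sigma \colon \mathcal{A} \to \cSymd_{\mathcal{A},\sigma}$. Consequently $1+\sigma$ factors through a surjection $\overline{1+\sigma} \colon \mathcal{A}/\mathcal{O}_X \to \cSymd_{\mathcal{A},\sigma}$ whose kernel is exactly $\cSkew_{\mathcal{A},\sigma}/\mathcal{O}_X$. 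I would then define $\mathcal{E}'$ to be the fiber product sheaf associated to the pair of morphisms $i \colon \mathcal{O}_X \to \cSymd_{\mathcal{A},\sigma}$ (the map sending $1\mapsto 1_\mathcal{A}$ from \Cref{prop: strong}) and $\overline{1+\sigma}$.

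By the universal property of the fiber product, there is a commutative diagram with exact rows
\[
\begin{tikzcd}
0 \ar[r] & \cSkew_{\mathcal{A},\sigma}/\mathcal{O}_X \ar[d,equals] \ar[r] & \mathcal{E}' \ar[d] \ar[r] & \mathcal{O}_X \ar[d,"i"] \ar[r] & 0 \\
0 \ar[r] & \cSkew_{\mathcal{A},\sigma}/\mathcal{O}_X \ar[r] & \mathcal{A}/\mathcal{O}_X \ar[r,"\overline{1+\sigma}"] & \cSymd_{\mathcal{A},\sigma} \ar[r] & 0
\end{tikzcd}
\]
which gives property (iii) immediately. Property (ii) then follows from the snake lemma applied to this diagram, using the characteristic $2$ identifications $\cSymd_{\mathcal{A},\sigma} = \cAlt_{\mathcal{A},\sigma}$ and $i(\mathcal{O}_X) = \mathcal{O}_X \subset \cAlt_{\mathcal{A},\sigma}$.

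For property (i), note that $\mathcal{O}_X$ is contained in $\mathcal{E}$ as a subsheaf of $\mathcal{A}$: any local scalar $\lambda$ defines a local section of the fiber product $\mathcal{E}$ via $(0,\lambda)$, since $i(0) = 0 = (1+\sigma)(\lambda)$. The injection $\mathcal{E} \hookrightarrow \mathcal{A}$ then descends to a map $\mathcal{E}/\mathcal{O}_X \to \mathcal{A}/\mathcal{O}_X$, which factors through $\mathcal{E}'$ by the universal property of the latter. A second application of the snake lemma to the defining extension of $\mathcal{E}$ from \Cref{prop: strong}, quotienting out by $\mathcal{O}_X$ (included via $\mathcal{O}_X \subset \cSkew_{\mathcal{A},\sigma} \subset \mathcal{E}$), produces an exact sequence identical to the top row of the diagram above, and the five-lemma then shows that the canonical morphism $\mathcal{E}/\mathcal{O}_X \to \mathcal{E}'$ is an isomorphism.

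Property (iv) is a standard consequence of the description of the connecting map in long exact Ext sequences: under the canonical isomorphism $\mathrm{H}^1(X,\cSkew_{\mathcal{A},\sigma}/\mathcal{O}_X) \cong \mathrm{Ext}^1(\mathcal{O}_X,\cSkew_{\mathcal{A},\sigma}/\mathcal{O}_X)$, the boundary $\delta$ attached to the bottom row of my diagram sends a global section $s \in \cSymd_{\mathcal{A},\sigma}(X)$ to the Yoneda class obtained by pulling back that row along the morphism $\mathcal{O}_X \to \cSymd_{\mathcal{A},\sigma}$ representing $s$. Specializing to $s = 1$, the representing map is $i$ and the pullback is, by construction, the top row of the diagram, i.e., $\mathcal{E}'$. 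Hence $\Omega'(\mathcal{A},\sigma) = \delta(1)$ corresponds to $\alpha(\mathcal{E}')$, which is property (iv). The entire argument is essentially routine and directly parallels the proof of \Cref{prop: strong}; I do not anticipate any conceptual obstacle, the only genuinely new ingredient being the use of the characteristic $2$ hypothesis to quotient the $\cSkew$--component by $\mathcal{O}_X$ coherently.
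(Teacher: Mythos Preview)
Your proposal is correct and follows essentially the same approach as the paper's own proof, which simply says the argument is identical to that of \Cref{prop: strong} starting from the exact sequence $0\to \cSkew_{\mathcal{A},\sigma}/\mathcal{O}_X\to \mathcal{A}/\mathcal{O}_X\xrightarrow{1+\sigma}\cAlt_{\mathcal{A},\sigma}\to 0$ and defining $\mathcal{E}'$ as the fiber product along $i\colon\mathcal{O}_X\to\cAlt_{\mathcal{A},\sigma}$. You have merely made explicit the verifications (especially of property~(i)) that the paper leaves to the reader.
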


\begin{proof}
The proof is identical to that of \Cref{prop: strong}, but one starts with the exact sequence \[0\rightarrow \cSkew_{\mathcal{A},\sigma}/\mathcal{O}_X\rightarrow \mathcal{A}/\mathcal{O}_X\xrightarrow{1+\sigma} \cAlt_{\mathcal{A},\sigma}\rightarrow 0\] and one sets $\mathcal{E}'$ to be the fiber product sheaf associated to the pair $(i,1+\sigma)$ where $i: \mathcal{O}_X\rightarrow \cSymd_{\mathcal{A},\sigma}=\cAlt_{\mathcal{A},\sigma}$ is the canonical inclusion.
\end{proof}

\begin{cor}\label{cor: quaternion}
Fix a base field $k$ of characteristic $2$. Let $X$ be a scheme over $k$. Let $(\mathcal{A},\sigma,f)$ be a quadratic triple over $X$ and assume that $\mathrm{deg}(\mathcal{A})=2$. Then there are isomorphisms of vector bundles \[\mathcal{A}\cong \mathcal{E}\oplus \cSkew_{\mathcal{A},\sigma}/\mathcal{O}_X\quad \mbox{and} \quad \cAlt_{\mathcal{A},\sigma}\cong \mathcal{O}_X\] for some vector bundle $\mathcal{E}$ determined by an extension class $\alpha(\mathcal{E})\in\mathrm{Ext}^1(\mathcal{O}_X,\mathcal{O}_X)$. Moreover, $\alpha(\mathcal{E})=0$ if and only if $\Omega(\mathcal{A},\sigma)=0$.
\end{cor}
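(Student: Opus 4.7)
I would prove the corollary in four main steps, exploiting the degenerations that occur when $\deg(\cA)=2$ in characteristic $2$.

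The first step is to establish $\cAlt_{\cA,\sigma}\cong \cO_X$. The locally quadratic hypothesis gives a global inclusion $\cO_X\hookrightarrow \cAlt_{\cA,\sigma}$ via $c\mapsto c\cdot 1_\cA$; on any \'etale cover splitting $(\cA,\sigma,f)$ to the standard quadratic triple $(\Mat_2(\cO_X),\sigma_2,f_2)$, a direct matrix computation verifies $(\Id+\sigma_2)(\Mat_2)=\cO_X\cdot I$, so the inclusion is a local, hence global, isomorphism. In particular $\cAlt_{\cA,\sigma}/\cO_X=0$, so the defining sequence of the intermediate obstruction $\Omega'(\cA,\sigma)$ coincides with the defining sequence of the weak obstruction $\omega(\cA,\sigma)$.

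The second step uses this coincidence to obtain $\Omega'(\cA,\sigma)=0$: since a semi-trace exists by hypothesis, $\omega(\cA,\sigma)=0$, and therefore $\Omega'(\cA,\sigma)=0$. In parallel, I would use $f$ to split $\cSkew_{\cA,\sigma}$: any local section $a$ with $a+\sigma(a)=1$ satisfies $f(1)=\Trd_\cA(a)=1$ by a further matrix computation, so $f|_{\cSkew_{\cA,\sigma}}$ is a retraction of $\cO_X\hookrightarrow \cSkew_{\cA,\sigma}$, yielding a canonical splitting $\cSkew_{\cA,\sigma}\cong \cO_X\oplus (\cSkew_{\cA,\sigma}/\cO_X)$.

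The third step assembles the decomposition. Since $\cSymd_{\cA,\sigma}/\cO_X=0$, the fibre product construction of \Cref{prop: strong} returns $\cA$ itself, so $\Omega(\cA,\sigma)$ is the class of $0\to \cSkew_{\cA,\sigma}\to \cA\to \cO_X\to 0$ in $\mathrm{H}^1(X,\cSkew_{\cA,\sigma})$. The splitting from the previous step decomposes this group as $\mathrm{H}^1(X,\cO_X)\oplus \mathrm{H}^1(X,\cSkew_{\cA,\sigma}/\cO_X)$; the projection of $\Omega(\cA,\sigma)$ onto the second factor agrees with the class characterized in \Cref{lem: inter}, namely $\Omega'(\cA,\sigma)$, which vanishes. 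Consequently $\Omega(\cA,\sigma)$ is encoded entirely by its first component $\Omega_1\in\mathrm{Ext}^1(\cO_X,\cO_X)$. A direct fibre product calculation---that the extension of $\cO_X$ by $\cO_X\oplus(\cSkew_{\cA,\sigma}/\cO_X)$ with class $(\Omega_1,0)$ is abstractly isomorphic to $\cE\oplus(\cSkew_{\cA,\sigma}/\cO_X)$, where $\cE$ is the rank-$2$ extension of $\cO_X$ by $\cO_X$ with class $\Omega_1$---yields $\cA\cong \cE\oplus (\cSkew_{\cA,\sigma}/\cO_X)$. Setting $\alpha(\cE):=\Omega_1=\Omega(\cA,\sigma)$ makes the final equivalence $\alpha(\cE)=0\iff \Omega(\cA,\sigma)=0$ immediate.

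The principal obstacle is the recognition in the second step that $\Omega'(\cA,\sigma)$ must vanish via its identification with $\omega(\cA,\sigma)$; without this vanishing the fibre product in the last step would not collapse to a direct sum.
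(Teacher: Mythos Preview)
Your proposal is correct and follows essentially the same route as the paper: both identify $\cAlt_{\cA,\sigma}\cong\cO_X$, split $\cSkew_{\cA,\sigma}\cong\cO_X\oplus\cSkew_{\cA,\sigma}/\cO_X$ via $f$ (the paper cites \cite[Corollary 3.8]{GNR2023AzumayaObstructions} where you compute $f(1)=1$ directly), and then decompose the strong obstruction accordingly, using the vanishing of its $\cSkew_{\cA,\sigma}/\cO_X$--component. The only cosmetic difference is that the paper obtains this vanishing by exhibiting the section $\ell\in(\cA/\cO_X)(X)=(\cA/\cAlt_{\cA,\sigma})(X)$ as an explicit splitting, whereas you reach it via the identification $\Omega'(\cA,\sigma)=\omega(\cA,\sigma)$ forced by $\cAlt_{\cA,\sigma}/\cO_X=0$.
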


\begin{proof}
Since $\sigma$ is a locally quadratic involution, there is a morphism $\mathcal{O}_X\rightarrow \cAlt_{\mathcal{A},\sigma}$ which one can check is an isomorphism after splitting $(\mathcal{A},\sigma,f)$. Since we assume that $(\mathcal{A},\sigma)$ is equipped with a semi-trace $f$, there is a section $\ell\in \mathcal{A}/\mathcal{O}_X(X)$ with $\sigma(\ell)+\ell=1$ and $f=\mathrm{trd}_{\mathcal{A}}(\ell\cdot -)$ locally. The exact sequence \[0\rightarrow \cSkew_{\mathcal{A},\sigma}/\mathcal{O}_X\rightarrow \mathcal{A}/\mathcal{O}_X\xrightarrow{\mathrm{Id}+\sigma} \mathcal{O}_X\rightarrow 0\] is then right-split by the map $\mathcal{O}_X\rightarrow \mathcal{A}/\mathcal{O}_X$ gotten from the section $\ell\in \mathcal{A}/\mathcal{O}_X(X)$. By \Cref{lem: inter}, this is exactly the claim that the intermediate obstruction is trivial (under the assumption $\mathrm{deg}(\mathcal{A})=2$, the intermediate and weak obstructions agree).

Now consider the canonical sequence \begin{equation}\label{eq: can} 0\rightarrow \cSkew_{\mathcal{A},\sigma}\rightarrow \mathcal{A}\rightarrow \cAlt_{\mathcal{A},\sigma}\rightarrow 0.\end{equation} This extension determines the strong obstruction $\Omega(\mathcal{A},\sigma)$ by \Cref{prop: strong} (here we have $\mathcal{A}=\mathcal{E}$). Since we are assuming $\mathrm{deg}(\mathcal{A})=2$, the exact sequence \[0\rightarrow \mathcal{O}_X\rightarrow \cSkew_{\mathcal{A},\sigma}\rightarrow \cSkew_{\mathcal{A},\sigma}/\mathcal{O}_X\rightarrow 0\] is left-split by the semi-trace $f$, cf.\ \cite[Corollary 3.8]{GNR2023AzumayaObstructions}. Thus, the extension class associated to the exact sequence \eqref{eq: can} decomposes $\alpha(\mathcal{A})=(a,b)$ as an element of the group \[\mathrm{Ext}^1(\mathcal{O}_X,\mathcal{O}_X\oplus \cSkew_{\mathcal{A},\sigma}/\mathcal{O}_X)=\mathrm{Ext}^1(\mathcal{O}_X,\mathcal{O}_X)\oplus\mathrm{Ext}^1(\mathcal{O}_X,\cSkew_{\mathcal{A},\sigma}/\mathcal{O}_X).\] However, by the above, we have $b=0$. This gives the direct sum decomposition $\mathcal{A}\cong \mathcal{E}\oplus \cSkew_{\mathcal{A},\sigma}/\mathcal{O}_X$ as described in the corollary statement.
\end{proof}

\begin{example}\label{exmp: e2q} Let $k$ be a field of characteristic $2$. Let $E$ be an elliptic curve over $k$ such that $E[2]\cong \mu_2\times \mathbb{Z}/2\mathbb{Z}$ where $E[2]\subset E$ is the $2$-torsion subgroup scheme. Gille, Neher, and the second author of this paper have constructed a quaternion Azumaya algebra $\mathcal{Q}$ over $E$ with a locally quadratic orthogonal involution $\sigma$ such that $\Omega(\mathcal{Q},\sigma)\neq 0$ but $\omega(\mathcal{Q},\sigma)=0$, see \cite[Example 6.1]{GNR2023AzumayaObstructions}. The nonvanishing of the strong obstruction $\Omega(\mathcal{Q},\sigma)\neq 0$ is verified directly, making use of the fact that $\mathcal{Q}(E)=k$. In this example, we show how one can deduce this nonvanishing of the strong obstruction by determining explicitly the underlying module structure on $\mathcal{Q}$ and appealing to \Cref{cor: quaternion}.

Write $E'$ for another copy of $E$ and let $E'\rightarrow E$ be the multiplication-by-2 endomorphism. The Azumaya algebra $\mathcal{Q}$ is gotten by gluing two copies of $\Mat_2(\mathcal{O}_{E'})$ over $E'\times_E E'$ using the isomorphism
\[
\phi=\Inn\left(\begin{bmatrix} 0 & 1 \\ x & 0 \end{bmatrix}, \begin{bmatrix} 1 & 0 \\ 0 & x \end{bmatrix} \right)\in \PGL_2(E'\times_E E')
\]
where $x^2=1$ in $\mathcal{O}_{E'\times_E E'}(E'\times_E E')\cong k[x]/(x^2-1)\otimes (k\times k) \cong \left(k[x]/(x^2-1)\right)^2$.

The Azumaya algebra $\mathcal{Q}\otimes \mathcal{Q}\cong \cEnd(\mathcal{Q})$ can then be described by a similar gluing procedure along the same fppf-cover using the isomorphism \[\phi\otimes \phi=\Inn\left(\begin{bmatrix} 0 & 0 & 0 & 1 \\ 0 & 0 & x & 0\\ 0 & x & 0 & 0 \\ 1 & 0 & 0 & 0  \end{bmatrix}, \begin{bmatrix} 1 & 0 & 0 & 0 \\ 0 & x & 0 & 0\\ 0 & 0 & x & 0 \\ 0 & 0 & 0 & 1  \end{bmatrix}\right)\in \PGL_4(E'\times_E E').\] A simple calculation using the equalizer sequence defining the sheaf $\mathcal{Q}\otimes \mathcal{Q}$ shows that the global sections of $\mathcal{Q}\otimes \mathcal{Q}$ can be identified with \[(\mathcal{Q}\otimes \mathcal{Q})(E)=\left\{\begin{bmatrix} a & 0 & 0 & d \\ 0 & b & c & 0 \\ 0 & c & b & 0 \\ d & 0 & 0 & a \end{bmatrix}: a,b,c,d\in k\right\}\subset \Mat_2(\mathcal{O}_{E'}).\] One can then check directly that $\cEnd(\mathcal{Q})$ contains $4$ idempotent sections, namely $0$, the identity $1$, and the idempotents
\[
T=\begin{bmatrix}0 & 0 & 0 & 0\\ 0 & 1 & 0 & 0\\ 0 & 0 & 1 & 0\\ 0 & 0 & 0 & 0 \end{bmatrix} \quad \mbox{and}\quad 1-T=\begin{bmatrix}1 & 0 & 0 & 0\\ 0 & 0 & 0 & 0\\ 0 & 0 & 0 & 0\\ 0 & 0 & 0 & 1 \end{bmatrix}.
\]
It follows that $\mathcal{Q}\cong \mathcal{E}\oplus \mathcal{F}$ can be written as a sum of two indecomposable vector bundles $\mathcal{E}$ and $\mathcal{F}$ each of rank $2$. Because of the Krull-Schmidt theorem, it follows immediately from \Cref{cor: quaternion} that $\Omega(\mathcal{Q},\sigma)\neq 0$.

We can say more about the module structure of $\mathcal{Q}$. By the considerations above, we may assume $\mathcal{E}$ is the unique nontrivial extension of $\mathcal{O}_E$ by $\mathcal{O}_E$. Now, $\mathcal{Q}\cong \mathcal{Q}^\vee$ by the reduced trace, so $\mathrm{deg}(Q)=0$. Since $\mathrm{deg}(\mathcal{E})=0$ too, the degree of $\mathcal{F}$ must also be zero. From the classification of vector bundles on $E$, we must have $\mathcal{F}\cong \mathcal{E}\otimes \mathcal{L}$ for a line bundle $\mathcal{L}$ on $E$ with $\mathrm{deg}(\mathcal{L})=0$ also. But under the isomorphism $\mathcal{Q}\cong \mathcal{Q}^\vee$ we have $\mathcal{F}\cong \mathcal{F}^\vee$ from which it follows that $\mathcal{L}\cong \mathcal{L}^\vee$. As $\mathcal{Q}(E)=k$, we must have $\mathcal{L}\neq \mathcal{O}_E$ and, by our assumption that $E$ is ordinary, there is a unique such $\mathcal{L}$. Altogether this implies that there is an isomorphism of $\mathcal{O}_E$-modules $\mathcal{Q}\cong \mathcal{E}\oplus (\mathcal{E}\otimes \mathcal{L})$ where $\mathcal{E}$ is the unique nontrivial extension of $\mathcal{O}_E$ by itself and where $\mathcal{L}\neq \mathcal{O}_E$ is the unique line bundle such that $\mathcal{L}^{\otimes 2}\cong \mathcal{O}_E$.

One might wonder if $\mathcal{Q}$ is a neutral Azumaya algebra, i.e.\ $\mathcal{Q}\cong \cEnd(\mathcal{V})$ for some vector bundle $\mathcal{V}$. Under the assumption that $k$ is algebraically closed, this is the case since $\mathrm{Br}(E)=0$ by Tsen's theorem. We now give a description of such a $\cV$ for which $\mathcal{Q}\cong \cEnd(\mathcal{V})$ which will suffice for our purposes later.

Clearly such a bundle $\mathcal{V}$ is necessarily indecomposable, since $\mathcal{Q}$ is a direct sum of two indecomposable vector bundles. Up to replacing $\mathcal{V}$ with $\mathcal{V}\otimes \mathcal{L}$ for some line bundle $\mathcal{L}$, we may assume that $\deg(\mathcal{V})=0$ or $\deg(\mathcal{V})=1$. Up to replacing $\mathcal{V}$ again, we can assume that $\mathcal{V}$ is the unique extension \[0\rightarrow \mathcal{O}_E\rightarrow \mathcal{V}\rightarrow \mathcal{L}\rightarrow 0\] for either $\mathcal{L}=\mathcal{O}_E$ or $\mathcal{L}=\mathcal{O}_E(p)$ where $p\in E(k)$ is the origin, cf.\ \cite[Ch.\ V, Theorem 2.15]{MR2583634}. However, the former case necessarily does not occur since there is an isomorphism $\mathcal{E}\otimes \mathcal{E}\cong \mathcal{E}\oplus \mathcal{E}$ for $\mathcal{E}$ the unique nonsplit extension of $\mathcal{O}_E$ by itself, see \cite[Proposition 2.8]{MR318151}. Hence, $\mathcal{V}$ is isomorphic to the unique nonsplit extension of $\mathcal{O}_E$ and $\mathcal{O}_E(p)$ (cf.\ \cite[Corollary 2.7]{MR318151}). 

The first author thanks Igor Dolgachev for pointing out that the Severi--Brauer scheme associated to $\mathcal{Q}$ should be the symmetric square $S^2E$ over $E$. This direction led to the above description of $\mathcal{V}$.\end{example}

\begin{rem}\label{rem: torsquat}
Let $k$ be a field of characteristic $2$ with $\#k\geq q=2^r$ for some $r\geq 1$. Let $\Gamma_q=\mathbf{PGL}_2(\mathbb{F}_q)$ be the constant group scheme over $k$. Let $X_q'\rightarrow X_q$ be a $\Gamma_q$-torsor with both $X_q'$ and $X_q$ connected projective $k$-varieties. In \cite[Example 6.4]{GNR2023AzumayaObstructions}, Gille, Neher, and the second named author construct (for $q=4$, but the process generalizes in a straightforward way to all $q$) another Azumaya algebra $\mathcal{Q}_q$ on $X_q$ by gluing two copies of $\Mat_2(\mathcal{O}_{X_q'})$ along the cocycle
\[
\gamma=\left(g\right)_{g\in \Gamma_q}\in \PGL_2(X_q'\times_{X_q} X_q').
\]
Here, we have that $X'_q \times_{X_q} X'_q \cong X'_q \times_k \Gamma_q$ and so $\cO_{X_q'\times_{X_q} X_q'}(X_q'\times_{X_q} X_q') \cong k^{|\Gamma_q|}$. Therefore, 
\[
\PGL_2(X_q'\times_{X_q} X_q') = \PGL_2(k)^{|\Gamma_q|}
\]
where we index the factors with the elements of $\Gamma_q$, thus writing $\gamma = \left(g\right)_{g\in \Gamma_q}$ makes sense. In \cite{GNR2023AzumayaObstructions}, they show that $\mathcal{Q}_q$ admits a locally quadratic orthogonal involution $\sigma_q$ and that, when $4\mid q$, we have $\omega(\mathcal{Q}_q,\sigma_q)\neq 0$. Their proof of the nonvanishing of the weak invariant relies crucially on the existence of an element $\lambda \neq 0,1 \in \mathbb{F}_4$ and their same argument does not apply to show $\omega(\mathcal{Q}_2,\sigma_2)\neq 0$.

Applying the results of this section, we find a new proof that $\omega(\mathcal{Q}_q,\sigma_q)\neq 0$ for all powers $q=2^r$ with $r\geq 2$. To start, note that the Azumaya algebra $\mathcal{Q}_q\otimes \mathcal{Q}_q\cong \cEnd(\mathcal{Q}_q)$ on $X_q$ is described by the cocycle \[\gamma\otimes \gamma=(g\otimes g)_{g\in \Gamma_q}\in \PGL_4(X_q'\times_{X_q} X_q').\]
A calculation with the sheaf exact sequence shows that when $4\mid q$ \[(\mathcal{Q}_q\otimes \mathcal{Q}_q)(X_q)=\left\{ \begin{bmatrix} a+b & 0 & 0 & 0 \\ 0 & b & a & 0 \\ 0 & a & b & 0 \\ 0 & 0 & 0 & a+b\end{bmatrix}: a,b\in k\right\}\subset \Mat_2(\mathcal{O}_{X_q'})(X_q').\] Since there are only the two trivial idempotents $0$ and $1$ in this set, it follows that $\mathcal{Q}_q$ is indecomposable as an $\mathcal{O}_{X_q}$-module. If there was a map $f_q$ making $(\sigma_q,f_q)$ a quadratic pair on $\mathcal{Q}_q$ then, by \Cref{cor: quaternion}, we would find $\mathcal{Q}_q$ to be decomposable as a nontrivial direct sum of proper subsheaves. Hence, there is no such $f_q$.

On the other hand, when $q=2$, a similar computation shows that \[(\mathcal{Q}_2\otimes \mathcal{Q}_2)(X_2)=\left\{ \begin{bmatrix} a & a+b+c & a+b+c & 0 \\a+b+c & b & c & a+b+c \\ a+b+c & c & b & a+b+c \\ 0 & a+b+c & a+b+c & a\end{bmatrix}: a,b,c\in k\right\}\] as a subset of $\Mat_2(\mathcal{O}_{X_2'})(X_2')$. Here there are four idempotents, $0$ and $1$, as well as
\[
T=\begin{bmatrix}1 & 1 & 1 & 0\\ 1 & 0 & 0 & 1\\ 1 & 0 & 0 &1 \\ 0 & 1 & 1 & 1 \end{bmatrix} \quad \mbox{and} \quad 1-T=\begin{bmatrix}0 & 1 & 1 & 0\\ 1 & 1 & 0 & 1\\ 1 & 0 & 1 & 1 \\ 0 & 1 & 1 &0 \end{bmatrix}.\]
The nontrivial idempotents $T$ and $1-T$ induce a direct sum decomposition $\mathcal{Q}_2\cong \mathcal{F}\oplus \mathcal{G}$ with $\mathcal{F}$ and $\mathcal{G}$ each of rank $2$. Moreover, the inclusion $\mathcal{O}_X\subset \mathcal{Q}_2$ factors through one of the direct summands, say $\mathcal{F}$, and the composition \[\mathcal{F}\cong \mathcal{F}\oplus 0\subset \mathcal{Q}_2\xrightarrow{\mathrm{trd}_{\mathcal{Q}_2}} \mathcal{O}_X\] realizes $\mathcal{F}/\mathcal{O}_X\cong \mathcal{O}_X$.

One can explicitly construct a semi-trace in this case. Indeed, the composition \[f_2:\cSkew_{\mathcal{Q}_2,\sigma_2}\subset \mathcal{Q}_2\cong \mathcal{F}\oplus \mathcal{G}\twoheadrightarrow \mathcal{F}\oplus 0\subset \mathcal{Q}_2\] satisfies $\mathrm{trd}_{\mathcal{Q}_2}\circ f_2=0$ and so $f_2$ has target contained in $\mathcal{O}_X\subset \mathcal{F}$. After splitting, it is clear that $f_2(a+\sigma_2(a))=\mathrm{trd}_{Q_2}(a)$, which implies the same relation on $X_2$. Hence $\omega(X_2,\sigma_2)=0$ and $\Omega(X_2,\sigma_2)\neq 0$.
\end{rem}

\subsection{Lie algebra sheaves from quadratic triples}\label{Lie_extensions}
We maintain the assumption that the characteristic of $k$ is 2. Let $X$ and $(\mathcal{A},\sigma)$ be, as before, a scheme over $k$ and an Azumaya algebra with locally quadratic orthogonal involution on $X$. Assume that there is a semi-trace $f$ making $(\cA,\sigma,f)$ a quadratic triple. Then, the inclusion of algebraic group $X$-schemes $\mathbf{PGO}_{(\mathcal{A},\sigma,f)}\subset \mathbf{PGL}_{\mathcal{A}}$ induces an inclusion of Lie algebra sheaves $\mathfrak{pgo}_{(\mathcal{A},\sigma,f)}\subset \mathcal{A}/\mathcal{O}_X$. It turns out that this fact is completely determined by the vanishing of the weak obstruction:

\begin{thm}\label{thm: extensions_and_weak_obstructions}
Fix a base field $k$ of characteristic $2$. Let $X$ be any scheme over $k$. Suppose that $(\mathcal{A},\sigma)$ is an Azumaya algebra on $X$ with locally quadratic orthogonal involution. Then $\omega(\mathcal{A},\sigma)=0$ if and only if there exists an extension \begin{equation}\label{eq: extf} 0\rightarrow \cAlt_{\mathcal{A},\sigma}/\mathcal{O}_X\rightarrow \mathcal{F}\rightarrow \mathcal{O}_X\rightarrow 0\end{equation} whose pushout along the inclusion $\cAlt_{\mathcal{A},\sigma}/\mathcal{O}_X\subset \cSkew_{\mathcal{A},\sigma}/\mathcal{O}_X$ yields the extension $\alpha(\mathcal{E}')\in \mathrm{Ext}^1(\mathcal{O}_X,\cSkew_{\mathcal{A},\sigma}/\mathcal{O}_X)$ of \Cref{lem: inter}. 

Moreover, if $\omega(\mathcal{A},\sigma)=0$ so that an extension of the form \eqref{eq: extf} exists, then $\mathcal{F}$ is a locally free subsheaf of $\mathcal{A}/\mathcal{O}_X$ and $\mathcal{F}=\mathfrak{pgo}_{(\mathcal{A},\sigma,f)}$ for some quadratic pair $(\sigma,f)$ containing the involution $\sigma$.
\end{thm}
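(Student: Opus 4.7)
The strategy will be to realize the weak obstruction $\omega(\cA,\sigma)$ as the pushforward of $\alpha(\cE')$, so that the equivalence reduces to exactness of an $\Ext$ long exact sequence, and then to recover the semi-trace from the combinatorics of the pushout.

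First I would observe that $\omega(\cA,\sigma)$ is itself realized by an extension. By an argument exactly parallel to that of \Cref{prop: strong}, $\omega(\cA,\sigma)$ is the class of the short exact sequence
$$0\to \cSkew_{\cA,\sigma}/\cAlt_{\cA,\sigma}\to \cE''\to \cO_X\to 0,$$
where $\cE''\subset \cA/\cAlt_{\cA,\sigma}$ is the subsheaf of $\overline{a}$ with $a+\sigma(a)\in \cO_X\cdot 1$. A direct computation gives $\cE''\cong\cE'/(\cAlt_{\cA,\sigma}/\cO_X)$, so that $\alpha(\cE'')$ is the pushforward of $\alpha(\cE')$ along the quotient $\pi\colon\cSkew_{\cA,\sigma}/\cO_X\twoheadrightarrow \cSkew_{\cA,\sigma}/\cAlt_{\cA,\sigma}$. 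Applying $\Ext^*(\cO_X,-)$ to
$$0\to \cAlt_{\cA,\sigma}/\cO_X\to \cSkew_{\cA,\sigma}/\cO_X\xrightarrow{\pi} \cSkew_{\cA,\sigma}/\cAlt_{\cA,\sigma}\to 0,$$
exactness at $\Ext^1(\cO_X,\cSkew_{\cA,\sigma}/\cO_X)$ shows that $\alpha(\cE')$ lifts to a class in $\Ext^1(\cO_X,\cAlt_{\cA,\sigma}/\cO_X)$ if and only if $\pi_*\alpha(\cE')=\omega(\cA,\sigma)=0$. This proves the equivalence, and the extensions $\cF$ in the statement are precisely such preimages.

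For the moreover, assume such $\cF$ exists. The universal property of pushout yields a canonical map $\cF\to \cE'$, and since pushouts along an inclusion remain injective in an abelian category, this map is injective; composing with $\cE'\subset \cA/\cO_X$ exhibits $\cF$ as a locally free subsheaf of $\cA/\cO_X$. To construct $f$ from this data, note that the composition $\cF\hookrightarrow \cA/\cO_X\twoheadrightarrow \cA/\cAlt_{\cA,\sigma}$ has kernel $\cAlt_{\cA,\sigma}/\cO_X$ and so descends to an injection $\cO_X\cong \cF/(\cAlt_{\cA,\sigma}/\cO_X)\hookrightarrow \cA/\cAlt_{\cA,\sigma}$; let $\lambda\in (\cA/\cAlt_{\cA,\sigma})(X)$ be the image of $1$. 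Any local lift $\overline{\ell}\in\cF$ of $1\in\cO_X$ satisfies $\ell+\sigma(\ell)=1$ by the construction of $\cE'$, so $\xi(\lambda)=1\in\cSymd_{\cA,\sigma}(X)$. By \cite[Theorem 3.18]{GNR2023AzumayaObstructions}, this $\lambda$ produces a semi-trace $f$ making $(\cA,\sigma,f)$ a quadratic triple.

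It remains to check $\cF=\fpgo_{(\cA,\sigma,f)}$ as subsheaves of $\cA/\cO_X$, which I would verify locally on a cover splitting the triple. Fixing a local $\overline{\ell}$ as above, the sequence defining $\cF$ gives $\cF=\cO_X\cdot\overline{\ell}+\cAlt_{\cA,\sigma}/\cO_X$ on such an open. For $\fpgo_{(\cA,\sigma,f)}$ one has the standard intersection identity $\fpgo_{(\cA,\sigma,f)}\cap (\cSkew_{\cA,\sigma}/\cO_X)=\cAlt_{\cA,\sigma}/\cO_X$ coming from the kernel in the sequence $0\to\cAlt_{\cA,\sigma}/\cO_X\to\fpgo_{(\cA,\sigma,f)}\to\cO_X\to 0$, and the local formula $f(s)=\Trd_\cA(\ell s)$ combined with the cyclicity of the reduced trace gives
$$f\bigl(\ad(\ell)(s)\bigr)=\Trd_\cA(\ell^2 s)-\Trd_\cA(\ell s\ell)=0$$
for all $s\in\cSym_{\cA,\sigma}$, so $\overline{\ell}\in\fpgo_{(\cA,\sigma,f)}$ as well. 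Hence $\fpgo_{(\cA,\sigma,f)}=\cO_X\cdot\overline{\ell}+\cAlt_{\cA,\sigma}/\cO_X$ locally, matching $\cF$. The main obstacle I anticipate is purely one of bookkeeping: threading the pushout identifications cleanly so that $\cF$ is pinned down as a literal subsheaf of $\cE'\subset\cA/\cO_X$, and not merely up to isomorphism of abstract extensions.
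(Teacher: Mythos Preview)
Your proposal is correct and follows essentially the same approach as the paper: both identify the weak obstruction as the image of $\alpha(\cE')$ under the map induced by the short exact sequence $0\to\cAlt_{\cA,\sigma}/\cO_X\to\cSkew_{\cA,\sigma}/\cO_X\to\cSkew_{\cA,\sigma}/\cAlt_{\cA,\sigma}\to 0$, and then recover $f$ from the resulting section $\ell\in(\cA/\cAlt_{\cA,\sigma})(X)$ before verifying $\cF=\fpgo_{(\cA,\sigma,f)}$ locally. The paper packages the ``moreover'' part into a single commutative cube diagram while you argue more directly via the universal property of the pushout, and your local verification (the computation $f(\ad(\ell)(s))=\Trd_\cA(\ell^2 s)-\Trd_\cA(\ell s\ell)=0$) spells out what the paper leaves as ``can be checked locally''; the only minor caution is that the sequence $0\to\cAlt_{\cA,\sigma}/\cO_X\to\fpgo_{(\cA,\sigma,f)}\to\cO_X\to 0$ you invoke is formally established later in the paper, so in your write-up you should derive the intersection identity $\fpgo\cap(\cSkew_{\cA,\sigma}/\cO_X)=\cAlt_{\cA,\sigma}/\cO_X$ by direct local computation on the split triple rather than citing that sequence.
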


\begin{proof}
From the exact sequence \[0\rightarrow \cAlt_{\mathcal{A},\sigma}/\mathcal{O}_X\rightarrow \cSkew_{\mathcal{A},\sigma}/\mathcal{O}_X\rightarrow \cSkew_{\mathcal{A},\sigma}/\cAlt_{\mathcal{A},\sigma}\rightarrow 0\] there is an exact sequence
\begin{equation*}
\resizebox{\textwidth}{!}{$
\mathrm{Ext}^1(\mathcal{O}_X,\cAlt_{\mathcal{A},\sigma}/\mathcal{O}_X)\rightarrow \mathrm{Ext}^1(\mathcal{O}_X,\cSkew_{\mathcal{A},\sigma}/\mathcal{O}_X)\xrightarrow{\varphi} \mathrm{Ext}^1(\mathcal{O}_X, \cSkew_{\mathcal{A},\sigma}/\cAlt_{\mathcal{A},\sigma}).
$
}
\end{equation*} By construction, we have that $\varphi(\alpha(\mathcal{E}'))$ identifies with the weak obstruction $\omega(\mathcal{A},\sigma)$ under the isomorphism \[\mathrm{Ext}^1(\mathcal{O}_X, \cSkew_{\mathcal{A},\sigma}/\cAlt_{\mathcal{A},\sigma})\cong\mathrm{H}^1(X, \cSkew_{\mathcal{A},\sigma}/\cAlt_{\mathcal{A},\sigma}),\] hence the first claim.

To see the second claim, we note that any extension of the form \eqref{eq: extf} fits into a commutative diagram with exact rows as below.\\

\begin{adjustbox}{max width = \textwidth, center}
\begin{tikzcd}[column sep = small]
& 0\arrow{rr} &&[-4ex] \cAlt_{\mathcal{A},\sigma}/\mathcal{O}_X \arrow[equals]{dd}\arrow{dl}\arrow{rr} && \mathcal{F}\arrow{dd} \arrow{dl}\arrow{rr} &&[-3ex] \mathcal{O}_X \arrow[equals]{dl}\arrow[near start, dotted, "i'"]{dd}\arrow{rr} && 0\\
0\arrow{rr} && \cSkew_{\mathcal{A},\sigma}/\mathcal{O}_X\arrow[crossing over]{rr} && \mathcal{E}'\arrow[crossing over]{rr} && \mathcal{O}_X\arrow[crossing over]{rr}&& 0 &\\
& 0\arrow{rr} &&\cAlt_{\mathcal{A},\sigma}/\mathcal{O}_X \arrow{rr}\arrow{dl} && \mathcal{A}/\mathcal{O}_X\arrow[near start, "\pi"]{rr}\arrow[equals]{dl} && \mathcal{A}/\cAlt_{\mathcal{A},\sigma}\arrow{rr}\arrow["1+\sigma"]{dl} && 0\\
0\arrow{rr} && \cSkew_{\mathcal{A},\sigma}/\mathcal{O}_X\arrow{rr}\arrow[from=uu, equals, crossing over] && \mathcal{A}/\mathcal{O}_X\arrow["1+\sigma"]{rr}\arrow[from=uu, crossing over] && \cAlt_{\mathcal{A},\sigma}\arrow{rr}\arrow[from=uu, near start, "i", crossing over] && 0 &\\
\end{tikzcd}
\end{adjustbox} Here, $\pi$ is the canonical surjection. The existence of $i'$ follows. This gives an element $\ell\in \mathcal{A}/\cAlt_{\mathcal{A},\sigma}(X)$ such that $\sigma(\ell)+\ell=1$. The inclusion $\mathcal{F}\subset \mathcal{A}/\mathcal{O}_X$ is clear and the equality $\mathcal{F}=\mathfrak{pgo}_{(\mathcal{A},\sigma,f)}$ with $f=\mathrm{trd}_{\mathcal{A}}(\ell\cdot -)$ can be checked locally.\end{proof}

The vanishing of the intermediate obstruction then controls whether $(\cA,\sigma)$ can be equipped with a semi-trace $f$ such that the sequence \eqref{eq: extf} determining the Lie algebra sheaf $\mathfrak{pgo}_{(\mathcal{A},\sigma,f)}$ is split.

\begin{thm}\label{thm: liesections}
Fix a base field $k$ of characteristic $2$. Let $X$ be any scheme over $k$. Suppose that $(\mathcal{A},\sigma,f)$ is a quadratic triple over $X$. Let $\ell\in (\mathcal{A}/\cAlt_{\mathcal{A},\sigma})(X)$ be the unique section such that $\ell+\sigma(\ell)=1$ and $f=\mathrm{trd}_{\mathcal{A}}(\ell\cdot -)$ locally. 

Then the Lie algebra sheaf $\mathfrak{pgo}_{(\mathcal{A},\sigma,f)}$ of the group scheme $\mathbf{PGO}_{(\mathcal{A},\sigma,f)}$ over $X$ fits into a canonical extension \begin{equation}\tag{Lie}\label{eq: lie}
0\rightarrow \cAlt_{\mathcal{A},\sigma}/\mathcal{O}_X\rightarrow \mathfrak{pgo}_{(\mathcal{A},\sigma,f)}\rightarrow \mathcal{O}_X\rightarrow 0.\end{equation} The extension class $\alpha(\mathfrak{pgo}_{(\mathcal{A},\sigma,f)})\in \mathrm{Ext}^1(\mathcal{O}_X,\cAlt_{\mathcal{A},\sigma}/\mathcal{O}_X)$ that is associated to the sequence \eqref{eq: lie} maps to the intermediate obstruction $\Omega'(\mathcal{A},\sigma)$ under the morphism \[\mathrm{Ext}^1(\mathcal{O}_X,\cAlt_{\mathcal{A},\sigma}/\mathcal{O}_X)=\mathrm{H}^1(X,\cAlt_{\mathcal{A},\sigma}/\mathcal{O}_X)\rightarrow \mathrm{H}^1(X,\cSkew_{\mathcal{A},\sigma}/\mathcal{O}_X)\] induced by the inclusion $\cAlt_{\mathcal{A},\sigma}/\mathcal{O}_X\subset \cSkew_{\mathcal{A},\sigma}/\mathcal{O}_X$.

Moreover, the set of right-splittings of the sequence \eqref{eq: lie} is in bijection with the set of sections $\bar{\ell}\in (\mathcal{A}/\mathcal{O}_X)(X)$ lifting $\ell\in (\mathcal{A}/\cAlt_{\mathcal{A},\sigma})(X)$. 
\end{thm}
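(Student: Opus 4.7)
The plan is to construct \eqref{eq: lie} by hand from the inclusion $\fpgo_{(\cA,\sigma,f)}\subseteq \cA/\cO_X$ of \Cref{Lie_Algebras}, to identify its extension class with the intermediate obstruction via a pushout along $\cAlt_{\cA,\sigma}/\cO_X\hookrightarrow \cSkew_{\cA,\sigma}/\cO_X$, and finally to exhibit the bijection between right-splittings and lifts of $\ell$ by a cyclic-trace computation combined with a split-case argument.

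First I would define a morphism $\phi\colon \fpgo_{(\cA,\sigma,f)}\to \cO_X$ by $\bar{a}\mapsto a+\sigma(a)$. The condition $\bar{a}+\bar{\sigma}(\bar{a})=0$ in $\cA/\cO_X$ forces any local lift $a\in \cA(U)$ to satisfy $a+\sigma(a)\in \cO_X(U)$, and replacing $a$ with $a+c$ leaves the value unchanged since $2c=0$. For surjectivity, pick locally $\ell_i\in \cA(X_i)$ with $\ell_i+\sigma(\ell_i)=1$ (available by local quadraticity); the cyclic property of $\Trd_\cA$ gives
\[
f([\ell_i,s]) = \Trd_\cA\bigl(\ell_i[\ell_i,s]\bigr) = \Trd_\cA(\ell_i^2 s) - \Trd_\cA(\ell_i s \ell_i) = 0
\]
for all $s\in \cSym_{\cA,\sigma}(X_i)$, so $\bar{\ell}_i\in \fpgo_{(\cA,\sigma,f)}(X_i)$ with $\phi(\bar{\ell}_i)=1$.

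Identifying the kernel is the main obstacle. The standard identity $\cAlt_{\cA,\sigma}=\cSym_{\cA,\sigma}^\perp$ under the reduced trace pairing (a dimension count after \'etale splitting) combined with a cyclic-trace manipulation $f([a,s])=\Trd_\cA([\ell,a]s)$ shows $\bar{a}\in\ker\phi$ iff $a\in \cSkew_{\cA,\sigma}$ locally and $[\ell,a]\in \cAlt_{\cA,\sigma}$. For the inclusion $\cAlt_{\cA,\sigma}/\cO_X\subseteq \ker\phi$, I would use $\sigma(\ell)=1+\ell$ in characteristic $2$ to deduce $\sigma([\ell,c])=[\sigma(c),\ell]$, whence $[\ell,c+\sigma(c)]=[\ell,c]+\sigma([\ell,c])\in \cAlt_{\cA,\sigma}$, placing $\cAlt_{\cA,\sigma}/\cO_X$ inside $\fpgo_{(\cA,\sigma,f)}$ and inside $\ker\phi$. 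For the reverse inclusion---the implication that $a\in \cSkew_{\cA,\sigma}$ with $[\ell,a]\in \cAlt_{\cA,\sigma}$ forces $a\in \cAlt_{\cA,\sigma}$---I would reduce to the split model $(\Mat_{2n}(\cO_X),\sigma_{2n},f_{2n})$ with $\ell=E_{11}+\cdots+E_{nn}$ via \cite[Corollaire 2.7.0.32]{CF2015GroupesClassiques}, where a direct matrix calculation shows the anti-diagonal entries of $[\ell,s]$ equal those of $s$, and $\cAlt$ is cut out inside $\cSym$ by the vanishing of the anti-diagonal.

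Next, the canonical inclusions $\cAlt_{\cA,\sigma}/\cO_X\hookrightarrow \cSkew_{\cA,\sigma}/\cO_X$, $\fpgo_{(\cA,\sigma,f)}\hookrightarrow \cA/\cO_X$, and $\cO_X\hookrightarrow \cSymd_{\cA,\sigma}$ assemble into a morphism from \eqref{eq: lie} to the defining sequence of $\Omega'(\cA,\sigma)$. For $\bar{a}\in \mathcal{E}'$ with $a+\sigma(a)=c\in \cO_X$, the local decomposition $\bar{a}=c\bar{\ell}_i+\overline{a-c\ell_i}$ yields $\cSkew_{\cA,\sigma}/\cO_X+\fpgo_{(\cA,\sigma,f)}=\mathcal{E}'$; combined with $\cSkew_{\cA,\sigma}/\cO_X\cap \fpgo_{(\cA,\sigma,f)}=\cAlt_{\cA,\sigma}/\cO_X$, this identifies the pushout of \eqref{eq: lie} along the leftmost arrow with the sequence $0\to \cSkew_{\cA,\sigma}/\cO_X\to \mathcal{E}'\to \cO_X\to 0$ from \Cref{lem: inter}, so $\alpha(\fpgo_{(\cA,\sigma,f)})$ maps to $\alpha(\mathcal{E}')=\Omega'(\cA,\sigma)$. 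For the bijection, a splitting is determined by $\bar{\ell}':=s(1)\in \fpgo_{(\cA,\sigma,f)}(X)$ with $\phi(\bar{\ell}')=1$; writing $s':=\ell_i'-\ell_i\in \cSkew_{\cA,\sigma}$ for local lifts of $\bar{\ell}'$ and $\ell$, the semi-trace condition gives $[\ell_i,s']\in \cAlt_{\cA,\sigma}$, hence $s'\in \cAlt_{\cA,\sigma}$ by the split-case result, so $\bar{\ell}'$ projects to $\ell$ in $\cA/\cAlt_{\cA,\sigma}$. Conversely, any global lift of $\ell$ satisfies $\phi(\bar{\ell}')=1$ by construction and lies in $\fpgo_{(\cA,\sigma,f)}$ since $[\ell_i,\cAlt_{\cA,\sigma}]\subseteq \cAlt_{\cA,\sigma}$, completing the bijection.
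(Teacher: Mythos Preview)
Your proof is correct and takes a genuinely different route from the paper's. The paper realizes $\fpgo_{(\cA,\sigma,f)}$ \emph{categorically} as the fiber product of the canonical surjection $\pi\colon\cA/\cO_X\to\cA/\cAlt_{\cA,\sigma}$ and the map $i'\colon\cO_X\to\cA/\cAlt_{\cA,\sigma}$, $1\mapsto\ell$. The identification of this fiber product with $\fpgo_{(\cA,\sigma,f)}$ is then deferred to a local (split) computation which the paper leaves to the reader. Once that identification is granted, both the relation to $\Omega'(\cA,\sigma)$ and the splitting bijection fall out formally from a $\Hom(\cO_X,-)$ diagram chase and the universal property of the fiber product.

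You instead construct the surjection $\phi$ by hand and identify its kernel via explicit trace identities: the cyclic-trace formula $f([a,s])=\Trd_\cA([\ell,a]s)$ together with $\cAlt_{\cA,\sigma}=\cSym_{\cA,\sigma}^\perp$ converts the kernel condition into $[\ell,a]\in\cAlt_{\cA,\sigma}$, and only the implication ``$a\in\cSkew_{\cA,\sigma}$ with $[\ell,a]\in\cAlt_{\cA,\sigma}$ forces $a\in\cAlt_{\cA,\sigma}$'' is reduced to the split model. Your pushout argument for the intermediate obstruction and your bijection argument are likewise more computational than the paper's diagrammatic versions, but equivalent in content. The advantage of your approach is that it makes transparent exactly which algebraic identities drive the result (cyclicity of $\Trd_\cA$, $\sigma(\ell)=1+\ell$ in characteristic $2$); the paper's approach buys a cleaner endgame, since the splitting bijection is immediate from the fiber-product description without any further local analysis.
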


\begin{proof}
Let $\mathcal{E}'\subset \mathcal{A}/\mathcal{O}_X$ be the sheaf constructed in the proof of \Cref{lem: inter} and let $i':\mathcal{O}_X\rightarrow \mathcal{A}/\cAlt_{\mathcal{A},\sigma}$ be the morphism sending $1$ to $\ell$. There is then a commutative diagram with exact rows

\begin{adjustbox}{max width = \textwidth, center}
\begin{tikzcd}[column sep = small]
& 0\arrow{rr} &&[-4ex] \cAlt_{\mathcal{A},\sigma}/\mathcal{O}_X \arrow[equals]{dd}\arrow{dl}\arrow{rr} && \mathcal{F}\arrow{dd} \arrow{dl}\arrow{rr} &&[-3ex] \mathcal{O}_X \arrow[equals]{dl}\arrow[near start, "i'"]{dd}\arrow{rr} && 0\\
0\arrow{rr} && \cSkew_{\mathcal{A},\sigma}/\mathcal{O}_X\arrow[crossing over]{rr} && \mathcal{E}'\arrow[crossing over]{rr} && \mathcal{O}_X\arrow[crossing over]{rr}&& 0 &\\
& 0\arrow{rr} &&\cAlt_{\mathcal{A},\sigma}/\mathcal{O}_X \arrow{rr}\arrow{dl} && \mathcal{A}/\mathcal{O}_X\arrow[near start, "\pi"]{rr}\arrow[equals]{dl} && \mathcal{A}/\cAlt_{\mathcal{A},\sigma}\arrow{rr}\arrow["1+\sigma"]{dl} && 0\\
0\arrow{rr} && \cSkew_{\mathcal{A},\sigma}/\mathcal{O}_X\arrow{rr}\arrow[from=uu, equals, crossing over] && \mathcal{A}/\mathcal{O}_X\arrow["1+\sigma"]{rr}\arrow[from=uu, crossing over] && \cAlt_{\mathcal{A},\sigma}\arrow{rr}\arrow[from=uu, near start, "i", crossing over] && 0 &\\
\end{tikzcd}
\end{adjustbox} where $\pi$ is the canonical surjection and $\mathcal{F}$ is the pullback sheaf of the pair $(\pi,i')$. We claim that $\mathcal{F}$ identifies with the Lie algebra sheaf $\mathfrak{pgo}_{(\mathcal{A},\sigma,f)}$ inside of $\mathcal{A}/\mathcal{O}_X$. To see this, one may work locally where the top row of the above diagram splits. The claim in the case that this extension splits is left to the reader.

Next, applying the functor $\mathrm{Hom}(\mathcal{O}_X,-)$ to the commutative diagram above gives the commutative diagram below.

\begin{adjustbox}{max width = 0.75\textwidth, center}
\begin{tikzcd}[column sep = 0 em]
&[6ex] \mathcal{O}_X\arrow["i'", near end]{dd} \arrow[equals]{dl}\arrow["\delta"]{rr} &&[-10ex] \mathrm{Ext}^1(\mathcal{O}_X,\cAlt_{\mathcal{A},\sigma}/\mathcal{O}_X) \arrow{dl}\arrow[equals]{dd}\\
\mathcal{O}_X\arrow[crossing over]{rr} && \mathrm{Ext}^1(\mathcal{O}_X,\cSkew_{\mathcal{A},\sigma}/\mathcal{O}_X)\\
&\mathcal{A}/\cAlt_{\mathcal{A},\sigma}\arrow{rr}\arrow["1+\sigma"]{dl} &&\mathrm{H}^1(X,\cAlt_{\mathcal{A},\sigma}/\mathcal{O}_X)\arrow{dl}\\
 \cAlt_{\mathcal{A},\sigma}\arrow{rr}\arrow[from=uu, "i", crossing over] && \mathrm{H}^1(X,\cSkew_{\mathcal{A},\sigma}/\mathcal{O}_X)\arrow[from=uu, equals, crossing over]\\
\end{tikzcd}
\end{adjustbox}
A diagram chase then shows that the class $\alpha(\mathcal{F})=\delta(1)$ of the extension $\mathcal{F}$ above is sent to $\Omega'(\mathcal{A},\sigma)$ as claimed.

Now, a section of the surjection $\mathcal{F}\rightarrow \mathcal{O}_X$ determines an element $\bar{\ell}\in (\mathcal{A}/\mathcal{O}_X)(X)$. By commutativity of the diagrams above, it follows $\bar{\ell}$ lifts $\ell$. Conversely, since $\mathcal{F}$ is defined as the fiber product sheaf of the pair $(\pi,i')$, we find that any section $\bar{\ell}\in(\mathcal{A}/\mathcal{O}_X)(X)$ lifting $\ell$ defines a section of $\mathcal{F}$ which splits the surjection $\mathcal{F}\rightarrow \mathcal{O}_X$. These two correspondences are mutually inverse, as one can immediately check.
\end{proof}

\begin{cor}\label{intermediate_meaning}
Fix a base field $k$ of characteristic $2$. Let $X$ be any scheme over $k$ and let $(\cA,\sigma)$ be an Azumaya algebra with locally quadratic orthogonal involution. Then, the intermediate obstruction $\Omega'(\cA,\sigma)$ vanishes if and only if there exists a semi-trace $f$ making $(\cA,\sigma,f)$ a quadratic triple such that the canonical extension
\[
0 \to \cAlt_{\cA,\sigma}/\cO_X \to \fpgo_{(\cA,\sigma,f)} \to \cO_X \to 0
\]
is split.
\end{cor}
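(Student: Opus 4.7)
The plan is to deduce the corollary as a direct unwinding of \Cref{thm: liesections}, combined with the definition of $\Omega'(\cA,\sigma)$ as a connecting-map image. The key observation is that the splitting criterion coming out of \Cref{thm: liesections} — the existence of a lift $\bar\ell \in (\cA/\cO_X)(X)$ of the section $\ell \in (\cA/\cAlt_{\cA,\sigma})(X)$ attached to the semi-trace — is the \emph{same} obstruction problem, literally by definition, as requiring $1 \in \cSymd_{\cA,\sigma}(X)$ to lie in the image of $(1+\sigma)\colon (\cA/\cO_X)(X) \to \cSymd_{\cA,\sigma}(X)$, since in characteristic $2$ we have $\cSymd_{\cA,\sigma} = \cAlt_{\cA,\sigma}$ and the connecting map of the sequence $0 \to \cSkew_{\cA,\sigma}/\cO_X \to \cA/\cO_X \to \cSymd_{\cA,\sigma} \to 0$ sends $1$ to $\Omega'(\cA,\sigma)$.

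For the forward implication, I would proceed as follows. Assuming $\Omega'(\cA,\sigma) = 0$, the defining long exact sequence
\[
(\cA/\cO_X)(X) \xrightarrow{1+\sigma} \cSymd_{\cA,\sigma}(X) \xrightarrow{\delta} \rH^1(X, \cSkew_{\cA,\sigma}/\cO_X)
\]
yields a section $\bar\ell \in (\cA/\cO_X)(X)$ with $\bar\ell + \sigma(\bar\ell) = 1$. Let $\ell \in (\cA/\cAlt_{\cA,\sigma})(X)$ be its image; then $\xi(\ell) = 1$, so by \cite[Theorem 3.18]{GNR2023AzumayaObstructions} this $\ell$ determines a semi-trace $f$ locally given by $\Trd_\cA(\ell_i \cdot -)$, making $(\cA,\sigma,f)$ a quadratic triple. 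By the second half of \Cref{thm: liesections}, the right-splittings of the canonical sequence
\[
0 \to \cAlt_{\cA,\sigma}/\cO_X \to \fpgo_{(\cA,\sigma,f)} \to \cO_X \to 0
\]
are in bijection with the lifts of $\ell$ to $(\cA/\cO_X)(X)$, and our $\bar\ell$ is exactly such a lift, so the sequence splits.

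For the reverse implication, I would assume that some semi-trace $f$ exists making the sequence split. Let $\ell \in (\cA/\cAlt_{\cA,\sigma})(X)$ be the section associated to $f$. By \Cref{thm: liesections} again, the splitting corresponds to a lift $\bar\ell \in (\cA/\cO_X)(X)$ of $\ell$, and this lift automatically satisfies $(1+\sigma)(\bar\ell) = \xi(\ell) = 1$ in $\cSymd_{\cA,\sigma}(X)$. Thus $1$ lies in the image of $1+\sigma$, so by definition $\Omega'(\cA,\sigma) = \delta(1) = 0$.

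There is no real obstacle here; the corollary is essentially a rephrasing of the splitting-of-extensions part of \Cref{thm: liesections} using the definition of $\Omega'$. The only subtlety worth flagging is ensuring that a semi-trace exists in the forward direction: this is not invoked as a separate input, but is \emph{produced} from the lift $\bar\ell$ via the $\{\lambda\} \leftrightarrow \{f\}$ bijection recalled in \Cref{quadratic_triples}. Alternatively one could argue that the vanishing of $\Omega'$ forces the vanishing of the weak obstruction $\omega(\cA,\sigma)$ via the map $\rH^1(X,\cSkew_{\cA,\sigma}/\cO_X) \to \rH^1(X,\cSkew_{\cA,\sigma}/\cAlt_{\cA,\sigma})$, and hence the existence of some quadratic triple structure, but the direct construction above is cleaner and keeps track of which $f$ actually yields the splitting.
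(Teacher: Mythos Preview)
Your proposal is correct and essentially mirrors the paper's own proof: both directions invoke \Cref{thm: liesections}, constructing the semi-trace $f$ from a lift $\bar\ell$ in the forward direction and reading off the vanishing of $\Omega'$ from the existence of such a lift (equivalently, the vanishing of the extension class) in the reverse direction. The only cosmetic difference is that for the reverse implication the paper cites the ``extension class maps to $\Omega'$'' clause of \Cref{thm: liesections} directly, whereas you pass through the lift-bijection clause; both are immediate.
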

\begin{proof}
If such an $f$ exists for which the sequence is split, then the extension class $\alpha(\fpgo_{(\cA,\sigma,f)}) \in \Ext^1(\cO_X,\cAlt_{\cA,\sigma}/\cO_X)$ is zero. By \Cref{thm: liesections}, this class maps to the intermediate obstruction $\Omega'(\cA,\sigma)$, which is therefore also zero.

Conversely, suppose $\Omega'(\cA,\sigma)=0$. This means there is a section $\bar{\ell} \in (\cA/\cO_X)(X)$ which maps to $1\in \cSymd_{\cA,\sigma}(X)$. Hence, we may use the image of $\bar{\ell}$ under the map $(\cA/\cO_X)(X) \to (\cA/\cAlt_{\cA,\sigma})(X)$ to define a semi-trace $f$. Now, invoking \Cref{thm: liesections} again, the section $\bar{\ell}$ corresponds to a right-splitting of the sequence \eqref{eq: lie} for this $f$. This concludes the proof.
\end{proof}

\section{Relative deformations of quadratic triples}\label{deformations_of_quadratic_triples}
Let $X$ be an arbitrary scheme and let $(\mathcal{A},\sigma,f)$ be a quadratic triple on $X$. In this section we analyze the deformations of $(\mathcal{A},\sigma,f)$ in comparison with the deformations of $\mathcal{A}$. We say that quadratic triples are \emph{relatively unobstructed} when they deform if and only if the underlying algebra deforms. We show that, in two extraordinary cases, certain quadratic triples are always relatively unobstructed. 

Specifically, in \Cref{relatively_unobstructed}, we show that any quaternion Azumaya algebra with quadratic pair is relatively unobstructed as are all quadratic triples over a scheme $X$ on which $2$ is a global unit. Both results follow from observing that in these cases, the Lie algebra sheaf $\fpgo_{(\cA,\sigma,f)}$ is a direct summand of $\fpgl_\cA$.

\begin{lem}\label{degree_2_lie_split}
Let $X$ be an arbitrary scheme. Let $(\cA,\sigma,f)$ be a quadratic triple on $X$ and assume that $\deg(\cA)=2$. Then $\fpgo_{(\cA,\sigma,f)}$ is a direct summand of $\fpgl_\cA$.
\end{lem}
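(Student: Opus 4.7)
The plan is to construct an explicit $\mathcal{O}_X$-linear retraction of the inclusion $\fpgo_{(\mathcal{A},\sigma,f)}\hookrightarrow \fpgl_{\mathcal{A}}$. Consider the canonical $\mathcal{O}_X$-linear morphism
\[
\pi\colon \mathcal{A}\to \cAlt_{\mathcal{A},\sigma},\quad a\mapsto a-\sigma(a).
\]
Its kernel is $\cSym_{\mathcal{A},\sigma}$, which contains the unit submodule $\mathcal{O}_X\cdot 1$ (since $\sigma(1)=1$), so $\pi$ descends to a morphism
\[
\bar\pi\colon \fpgl_{\mathcal{A}}=\mathcal{A}/\mathcal{O}_X\to \cAlt_{\mathcal{A},\sigma}.
\]

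The key step is to show that the composition $\fpgo_{(\mathcal{A},\sigma,f)}\hookrightarrow \fpgl_{\mathcal{A}}\xrightarrow{\bar\pi} \cAlt_{\mathcal{A},\sigma}$ is an isomorphism of line bundles. In degree $2$, both $\fpgo_{(\mathcal{A},\sigma,f)}$ and $\cAlt_{\mathcal{A},\sigma}$ have rank $1$ in any characteristic, so it suffices to verify nonvanishing at the residue field of each point $x\in X$. Once this isomorphism is in hand, its inverse $\tau\colon \cAlt_{\mathcal{A},\sigma}\iso \fpgo_{(\mathcal{A},\sigma,f)}$ composed with $\bar\pi$ yields a retraction $\tau\circ \bar\pi\colon \fpgl_{\mathcal{A}}\to \fpgo_{(\mathcal{A},\sigma,f)}$, and therefore $\fpgo_{(\mathcal{A},\sigma,f)}$ is a direct summand of $\fpgl_{\mathcal{A}}$.

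For the pointwise verification, I would pass fppf-locally to a splitting of $(\mathcal{A},\sigma,f)$ as the standard triple $(\Mat_2,\sigma_2,f_2)$ and separate into two cases according to $\mathrm{char}\, k(x)$. When $\mathrm{char}\, k(x)\neq 2$, the semi-trace condition defining $\fpgo_{(\mathcal{A},\sigma,f)}$ becomes automatic and one identifies $\fpgo_{(\mathcal{A},\sigma,f)}$ with $\cSkew_{\mathcal{A},\sigma}$ near $x$; then $\bar\pi$ restricts to multiplication by $2$, which is invertible. When $\mathrm{char}\, k(x)=2$, the locally quadratic hypothesis gives $\cAlt_{\mathcal{A},\sigma}\cong \mathcal{O}_X$, and a local generator of $\fpgo_{(\mathcal{A},\sigma,f)}$ is the image $\bar\ell\in \mathcal{A}/\mathcal{O}_X$ of a local lift of the section $\ell\in (\mathcal{A}/\cAlt_{\mathcal{A},\sigma})(X)$ determined by the semi-trace (satisfying $\ell+\sigma(\ell)=1$), so $\bar\pi(\bar\ell)=1$ generates $\cAlt_{\mathcal{A},\sigma}$. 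The main obstacle is this uniform treatment across schemes of mixed characteristic; the stalk-by-stalk reduction to the two residue-characteristic cases resolves it cleanly.
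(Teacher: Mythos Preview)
Your argument is correct and takes a genuinely different route from the paper's proof. The paper works in the split model $(\Mat_2,\sigma_2,f_2)$, writes down an explicit complement $\mathcal{N}=\{\begin{smallmatrix}0&a\\b&0\end{smallmatrix}\}\subset\fpgl_2$ to $\fpgo_2=\{\begin{smallmatrix}0&0\\0&c\end{smallmatrix}\}$, and then checks that every automorphism in $\PGO_2$ stabilises $\mathcal{N}$ (because $\mathcal{N}$ consists precisely of the classes of symmetric matrices on which $f_2$ vanishes), so that the local projections onto $\fpgo_2$ glue over a splitting cover. Your approach instead produces the retraction \emph{globally} from the outset: the map $\bar\pi=\mathrm{Id}-\sigma\colon\fpgl_\cA\to\cAlt_{\cA,\sigma}$ is intrinsically defined, and you only need a local computation to see that its restriction to $\fpgo_{(\cA,\sigma,f)}$ is an isomorphism of line bundles. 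This sidesteps the descent/gluing step entirely. The two complements in fact coincide: your $\ker(\bar\pi)=\cSym_{\cA,\sigma}/\cO_X$ is exactly the paper's $\mathcal{N}$ in the split case.

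One small simplification: the residue-characteristic case split is not actually necessary. In the split model, $\fpgo_2$ is generated by $\begin{smallmatrix}0&0\\0&1\end{smallmatrix}$ in \emph{every} characteristic, and $\bar\pi$ sends this to $\begin{smallmatrix}-1&0\\0&1\end{smallmatrix}$, a generator of $\cAlt_{\cA,\sigma}$ regardless of whether $2$ is invertible. So a single uniform computation over an fppf splitting suffices, and faithfully flat descent then gives that $\bar\pi|_{\fpgo}$ is an isomorphism on $X$.
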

\begin{proof}
From the descriptions of the Lie algebra sheaves given in \Cref{Lie_Algebras}, we know that
\[
\fpgl_2 = \Mat_{2n}(\cO_X)/\cO_X
\]
where for a general matrix we have
\[
\begin{bmatrix} a & b \\ c & d \end{bmatrix} \equiv \begin{bmatrix} 0 & b \\ c & d-a \end{bmatrix} \text{ as sections in } \Mat_{2n}(\cO_X)/\cO_X.
\]
Therefore, we can write for $U\subseteq X$ open
\[
\fpgl_2(U) = \left\{ \begin{bmatrix} 0 & a \\ b & c \end{bmatrix} \middle\vert\, a,b,c \in \cO_X(U) \right\} \\
\]
and then compute that
\[
\fpgo_2(U) = \left\{ \begin{bmatrix} 0 & 0 \\ 0 & c \end{bmatrix} \middle\vert\, c \in \cO_X(U) \right\}
\]
Thus there is a splitting $\fpgl_2 = \fpgo_2 \oplus \mathcal{N}$ where
\[
\mathcal{N}(U) = \left\{ \begin{bmatrix} 0 & a \\ b & 0 \end{bmatrix} \middle\vert\, a,b \in \cO_X(U) \right\}. \\
\]

Now, any automorphism $\varphi \in \PGO_2(U)$ induces an automorphism $\overline{\varphi}$ on $\fpgl_2(U)$ which stabilizes $\fpgo_2(U)$. We argue that, in addition, any $\overline{\varphi}$ also stabilizes $\mathcal{N}(U)$. Since any element of $\mathcal{N}(U)$ is (represented by) a symmetric matrix $s$ with $f_2(s)=0$, the image $\varphi(s)$ must have the same properties. Thus we have that
\[
\varphi(s) = \begin{bmatrix} a & b \\ c & a \end{bmatrix}
\]
with $a=0$ since $f(\varphi(s)) = a$. Thus $\varphi(s) \in \mathcal{N}(U)$. The fact that all automorphisms in $\PGO_2$ stabilize $\mathcal{N}$ implies our claim as follows.

Let $\cU = \{U_i \to X\}_{i\in I}$ be an \'etale cover splitting $\cA$ and let $(\varphi_{ij}) \in Z^1(\cU,\PGO_2)$ be a cocycle defining $\cA$. Then $(\overline{\varphi_{ij}}) \in Z^1(\cU,\bAut(\fpgo_2))$ will define $\fpgo_{(\cA,\sigma,f)}$ and the same cocycle will define $\fpgl_\cA$ when viewed as an element of $Z^1(\cU,\bAut(\fpgl_2))$.  Letting $\pi_i \colon \fpgl_\cA|_{U_i} \surj \mathcal{N}|_{U_i}$ be the canonical projections, the following diagrams will commute since $\mathcal{N}$ is stabilized:
\[
\begin{tikzcd}
(\fpgo_2|_{U_i} \oplus \mathcal{N}|_{U_i})|_{U_{ij}} \ar[d,"\pi_i|_{U_{ij}}"] \ar[r,"\overline{\varphi_{ij}}"] & (\fpgo_2|_{U_j} \oplus \mathcal{N}|_{U_j})|_{U_{ij}} \ar[d,"\pi_j|_{U_{ij}}"]\\
(\fpgo_2|_{U_i})|_{U_{ij}} \ar[r,"\overline{\varphi_{ij}}"] & (\fpgo_2|_{U_j})|_{U_{ij}}.
\end{tikzcd}
\]
Thus, the $\pi_i$ glue into a global projection $\pi \colon \fpgl_\cA \surj \fpgo_{(\cA,\sigma,f)}$ which splits the inclusion $\fpgo_{(\cA,\sigma,f)} \inj \fpgl_\cA$, and we are done.
\end{proof}

\begin{lem}\label{thm: n2split}
Let $X$ be a scheme such that $2$ is invertible on $X$, i.e.\ $2\in \mathcal{O}_X^\times(X)$. Let $(\mathcal{A},\sigma,f)$ be a quadratic triple on $X$. Then $\mathfrak{pgo}_{(\mathcal{A},\sigma,f)}\cong \cAlt_{\mathcal{A},\sigma}$. In particular, the inclusion $\mathfrak{pgo}_{(\mathcal{A},\sigma,f)}\subset \mathcal{A}/\mathcal{O}_X$ realizes $\mathfrak{pgo}_{(\mathcal{A},\sigma,f)}$ as a direct summand of $\mathcal{A}/\mathcal{O}_X$.
\end{lem}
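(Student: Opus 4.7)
The plan is to exploit the fact that when $2\in \mathcal{O}_X^\times(X)$, the involution $\sigma$ provides a canonical idempotent decomposition of $\cA$, and the semi-trace is forced by the definition to be $f = \tfrac{1}{2}\Trd_\cA$.

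First, I will observe that the projectors $\tfrac{1}{2}(\Id_\cA \pm \sigma)$ give a direct sum decomposition $\cA \cong \cSym_{\cA,\sigma} \oplus \cSkew_{\cA,\sigma}$, with the additional identifications $\cSym_{\cA,\sigma} = \cSymd_{\cA,\sigma}$ and $\cSkew_{\cA,\sigma} = \cAlt_{\cA,\sigma}$ (for a skew section $s$ one can write $s = \tfrac{1}{2}(\Id_\cA - \sigma)(s)$, and so on). Since $\cO_X \subset \cSym_{\cA,\sigma}$ and $\cO_X \cap \cSkew_{\cA,\sigma} = 0$ (a scalar satisfying $s = -s$ must vanish when $2$ is invertible), this descends to a canonical decomposition
\[
\cA/\cO_X \;\cong\; (\cSym_{\cA,\sigma}/\cO_X) \oplus \cSkew_{\cA,\sigma}.
\]

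Next, I will identify $\fpgo_{(\cA,\sigma,f)}$ as the second summand by checking the local description from \Cref{Lie_Algebras}. Of the two defining conditions, the relation $x + \overline{\sigma}(x) = 0$ in $\cA/\cO_X$ cuts out exactly $\cSkew_{\cA,\sigma}$, because every such class admits a unique skew representative (subtract half of the scalar $\tilde{x} + \sigma(\tilde{x}) \in \cO_X$ from any lift $\tilde{x}$). The second condition $f\circ \ad(x)|_{\cSym_{\cA,\sigma}} = 0$ becomes $\tfrac{1}{2}\Trd_\cA(xs - sx) = 0$ for $x$ skew and $s$ symmetric; one quickly checks that $\ad(x)(s) \in \cSym_{\cA,\sigma}$ using that $\sigma(xs-sx) = -sx + xs$, and then the identity is automatic from $\Trd_\cA(xs) = \Trd_\cA(sx)$.

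Combining these identifications yields $\fpgo_{(\cA,\sigma,f)} \cong \cSkew_{\cA,\sigma} = \cAlt_{\cA,\sigma}$, and the canonical decomposition of $\cA/\cO_X$ above exhibits this precisely as a direct summand of $\fpgl_\cA$. There is no real obstacle here; the only point worth flagging is that, in contrast to the cocycle-stabilization argument used in \Cref{degree_2_lie_split}, no such descent argument is needed in this setting because the decomposition is canonical at the level of sheaves, which is the essential simplification afforded by the invertibility of $2$.
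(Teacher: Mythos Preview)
Your proposal is correct and follows essentially the same strategy as the paper: both identify $\fpgo_{(\cA,\sigma,f)}$ with $\cAlt_{\cA,\sigma}=\cSkew_{\cA,\sigma}$ inside $\cA/\cO_X$ and exhibit it as a direct summand via the projector $\tfrac{1}{2}(\Id-\sigma)$. The only noteworthy difference is that you verify the two defining conditions of $\fpgo_{(\cA,\sigma,f)}$ by a direct global computation, whereas the paper first checks the inclusion $\cAlt_{\cA,\sigma}\subset\fpgo_{(\cA,\sigma,f)}$ and then upgrades it to an equality by comparing ranks at residue fields; your route is slightly more self-contained in that it avoids the fiberwise step.
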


\begin{proof}
Consider the composition \[\cAlt_{\mathcal{A},\sigma}\rightarrow \mathcal{A}\rightarrow \mathcal{A}/\mathcal{O}_X\] of the canonical inclusion and the quotient. This map is injective and one can check that $\cAlt_{\mathcal{A},\sigma}\subset\mathfrak{pgo}_{(\mathcal{A},\sigma,f)}$. For any point $x\in X$, the induced map \[(\cAlt_{\mathcal{A},\sigma})_{\kappa(x)}\rightarrow(\mathfrak{pgo}_{(\mathcal{A},\sigma)})_{\kappa(x)}\] is an isomorphism since $\kappa(x)$ is of characteristic not $2$. Thus $\cAlt_{\mathcal{A},\sigma}\cong \mathfrak{pgo}_{(\mathcal{A},\sigma)}$. 

Finally, the exact sequence \[0\rightarrow \cAlt_{\mathcal{A},\sigma}\rightarrow \mathcal{A}/\mathcal{O}_X\rightarrow \mathcal{N}\rightarrow 0\] is left-split by the map $(1/2)(\mathrm{Id}-\sigma)$.
\end{proof}

The following theorem shows how to use \Cref{degree_2_lie_split,thm: n2split} to obtain statements on the relative unobstructedness of quadratic triples.

\begin{thm}\label{relatively_unobstructed}
Let $k$ be any field and let $X$ be any scheme that's separated over $k$. Let $(\mathcal{A},\sigma,f)$ be a quadratic triple on $X$. If either:
\begin{enumerate}
\item\label{relatively_unobstructed_i} $k$ is not characteristic $2$, or
\item\label{relatively_unobstructed_ii} $\cA$ is a quaternion algebra,
\end{enumerate}
then for any small extension \[0\rightarrow J \rightarrow C'\rightarrow C\rightarrow 0\] of Artinian local $k$-algebras $(C',\mathfrak{m}_{C'})$ and $(C,\mathfrak{m}_C)$ with residue fields $k$, the outer vertical arrows of the commutative diagram below
\[
\begin{tikzcd}[column sep = tiny, center picture]
\mathrm{H}^1(X,\mathfrak{pgo}_{(\mathcal{A},\sigma,f)})\otimes_k J \arrow{r}\arrow{d} & \mathrm{Def}_{(\mathcal{A},\sigma,f)}(C')\arrow{r}\arrow{d} & \mathrm{Def}_{(\mathcal{A},\sigma,f)}(C)\arrow{r}\arrow{d} & \mathrm{H}^2(X,\mathfrak{pgo}_{(\mathcal{A},\sigma,f)})\otimes_k J\arrow{d} \\
\mathrm{H}^1(X,\mathcal{A}/\mathcal{O}_X)\otimes_k J \arrow{r} & \mathrm{Def}_{\mathcal{A},Az}(C')\arrow{r} & \mathrm{Def}_{\mathcal{A},Az}(C)\arrow{r} & \mathrm{H}^2(X,\mathcal{A}/\mathcal{O}_X)\otimes_k J
\end{tikzcd}
\] are injective. 

Consequently, if $\xi=(\mathcal{A}_C,\sigma_C,f_C)$ is any deformation of $(\mathcal{A},\sigma,f)$ to $X_C$, then the obstruction $ob(\xi; X_{C'}/X_C)$ to deforming $\xi$ to $X_{C'}$ vanishes if and only if the obstruction $ob(\mathcal{A}_C;X_{C'}/X_{C})$ to deforming $\mathcal{A}_C$ as an Azumaya algebra vanishes.
\end{thm}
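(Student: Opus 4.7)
The plan is to derive both the injectivity of the outer vertical arrows and the consequence from a single structural input: that the inclusion of $\cO_X$-modules $\fpgo_{(\cA,\sigma,f)} \hookrightarrow \cA/\cO_X = \fpgl_\cA$ admits a global $\cO_X$-linear retraction. From such a retraction one obtains a direct sum decomposition $\cA/\cO_X \cong \fpgo_{(\cA,\sigma,f)} \oplus \cN$ of $\cO_X$-modules. Since sheaf cohomology commutes with finite direct sums, and since tensoring with the finite-dimensional $k$-vector space $J$ preserves direct sums, this decomposition yields a split injection
\[
\rH^i(X,\fpgo_{(\cA,\sigma,f)}) \otimes_k J \hookrightarrow \rH^i(X,\cA/\cO_X) \otimes_k J
\]
for every $i$. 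Taking $i=1$ and $i=2$ then gives precisely the injectivity of the two outer vertical arrows appearing in the diagram.

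To produce the required splitting I would simply invoke the two lemmas already proven in the preceding subsection. Under hypothesis \ref{relatively_unobstructed_i}, \Cref{thm: n2split} identifies $\fpgo_{(\cA,\sigma,f)}$ with $\cAlt_{\cA,\sigma}$ and exhibits it as a direct summand of $\cA/\cO_X$ via the globally defined retraction $\tfrac{1}{2}(\Id-\sigma)$, which makes sense precisely because $2$ is a unit on $X$. Under hypothesis \ref{relatively_unobstructed_ii}, the splitting is exactly the statement of \Cref{degree_2_lie_split}, where the natural local splitting in the matrix model $\fpgl_2$ is verified to be $\PGO_2$-equivariant, so it glues along any \'etale splitting cocycle of $\cA$.

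For the final assertion, let $\xi=(\cA_C,\sigma_C,f_C)$ be a deformation of $(\cA,\sigma,f)$ to $X_C$. I would then apply \Cref{cor: forg}, according to which the canonical embedding $\fpgo_{(\cA,\sigma,f)}\subset \cA/\cO_X$ sends the obstruction class $ob(\xi;X_{C'}/X_C)$ of \Cref{lem: defqp} to the Azumaya-algebra obstruction $ob(\cA_C;X_{C'}/X_C)$ of \Cref{lem: Azdef} under the induced map on $\rH^2(X,-)\otimes_k J$. Since this map is injective by the first two paragraphs, the two obstructions vanish simultaneously, yielding the stated equivalence. I anticipate no serious obstacle: the whole argument reduces to applying the splitting lemmas cited above, and the only conceptual point worth underlining is that those retractions are genuinely $\cO_X$-linear maps of sheaves on all of $X$, not merely stalkwise or Lie-algebra-over-a-field splittings, which is what makes the cohomological comparison work.
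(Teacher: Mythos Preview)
Your proposal is correct and follows essentially the same approach as the paper: invoke \Cref{degree_2_lie_split} and \Cref{thm: n2split} to obtain a global $\cO_X$-linear retraction of $\fpgo_{(\cA,\sigma,f)}\hookrightarrow \cA/\cO_X$, deduce split injectivity on $\rH^i(X,-)\otimes_k J$, and then appeal to \Cref{cor: forg} for the compatibility of obstruction classes. Your write-up is in fact slightly more explicit than the paper's, which simply cites the splitting lemmas and \Cref{cor: forg} without spelling out the cohomological consequence.
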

\begin{proof}
We obtain the commutative diagram in the statement from \Cref{cor: forg}. The outer vertical arrows are injective since they have a left inverse arising from the splitting\[
\fpgo_{(\cA,\sigma,f)}\to \fpgl_\cA = \cA/\cO_X \to \fpgo_{(\cA,\sigma,f)}
\]
which exists either by \Cref{thm: n2split} in case \ref{relatively_unobstructed_i} or \Cref{degree_2_lie_split} in case \ref{relatively_unobstructed_ii}.
\end{proof}
As a brief warning, when we have a quadratic triple $(\cA_C,\sigma_C,f_C)$ over $X_C$ as in \Cref{relatively_unobstructed} and we assume that there exists a deformation $\cA_{C'}$ on $X_{C'}$ of $\cA_C$ as an Azumaya algebra, we are guaranteed that there exists a deformation $(\cB_{C'},\sigma_{C'},f_{C'})$ on $X_{C'}$ of the quadratic triple, but the argument does not imply that $\cA_{C'} \cong \cB_{C'}$.

For the other extreme, when $X$ is purely of characteristic $2$, the following lemma and subsequent theorem show that the situation is completely different.
\begin{lem}\label{lem: zero}
Let $k$ be a field of characteristic $2$.
Let $X$ be a smooth and projective surface over $k$. Assume that the canonical bundle of $X$ is trivial, i.e.\ $K_X\cong \mathcal{O}_X$. Fix an Azumaya algebra with locally quadratic orthogonal involution $(\mathcal{A},\sigma)$ over $X$ such that the map \[\mathcal{A}(X)\xrightarrow{\mathrm{Id}+\sigma} \cAlt_{\mathcal{A},\sigma}(X)\] is identically zero. 

Then, with these assumptions, the canonical map \begin{equation}\mathrm{H}^2(X,\cAlt_{\mathcal{A},\sigma}/\mathcal{O}_X)\rightarrow \mathrm{H}^2(X,\mathcal{A}/\mathcal{O}_X)\end{equation} is identically zero as well.
\end{lem}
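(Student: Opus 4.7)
The plan is to use Serre duality, closely modeling the argument in the proof of \Cref{cor: quartvan}. Since $\sigma$ is locally quadratic in characteristic $2$, we have a chain of inclusions $\cO_X\subseteq \cAlt_{\cA,\sigma}\subseteq \cA$ of locally free $\cO_X$--modules, yielding a short exact sequence
$$0\to \cAlt_{\cA,\sigma}/\cO_X\to \cA/\cO_X\to \cA/\cAlt_{\cA,\sigma}\to 0,$$
and the map in question is the map on $\mathrm{H}^2$ induced by the first arrow. Since $K_X\cong \cO_X$, Serre duality identifies this map with the dual of
$$\mathrm{H}^0(X,(\cA/\cO_X)^\vee)\to \mathrm{H}^0(X,(\cAlt_{\cA,\sigma}/\cO_X)^\vee).$$
The kernel of this second map is $\mathrm{H}^0(X,(\cA/\cAlt_{\cA,\sigma})^\vee)$ by the dualized short exact sequence, so it suffices to show that the inclusion $\mathrm{H}^0(X,(\cA/\cAlt_{\cA,\sigma})^\vee)\subseteq \mathrm{H}^0(X,(\cA/\cO_X)^\vee)$ is an equality.

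Next I would identify these dual sheaves using the nondegenerate reduced trace pairing $(a,b)\mapsto \Trd_\cA(ab)$, which gives an isomorphism $\cA\iso \cA^\vee$. Under this pairing, $(\cA/\cO_X)^\vee$ corresponds to $\ker(\Trd_\cA)\subseteq \cA$, and $(\cA/\cAlt_{\cA,\sigma})^\vee$ corresponds to the trace-orthogonal of $\cAlt_{\cA,\sigma}=\Img(\Id+\sigma)$. Combining the identity
$$\Trd_\cA(a(x+\sigma(x)))=\Trd_\cA((a+\sigma(a))x)$$
(valid because $\Trd_\cA\circ\sigma=\Trd_\cA$ and the trace pairing is symmetric) with nondegeneracy of the trace pairing identifies this orthogonal with $\cSkew_{\cA,\sigma}=\Ker(\Id+\sigma)$. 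A short local computation with a hyperbolic model of $\sigma$, available in characteristic $2$ by local quadraticness, shows moreover that $\cSkew_{\cA,\sigma}\subseteq \ker(\Trd_\cA)$, so that the inclusion $(\cA/\cAlt_{\cA,\sigma})^\vee\hookrightarrow (\cA/\cO_X)^\vee$ corresponds to the inclusion $\cSkew_{\cA,\sigma}\hookrightarrow \ker(\Trd_\cA)$.

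With these identifications in hand, the hypothesis that $\cA(X)\xrightarrow{\Id+\sigma}\cAlt_{\cA,\sigma}(X)$ is identically zero translates exactly to $\cA(X)=\cSkew_{\cA,\sigma}(X)$. Combining with the previous step,
$$\mathrm{H}^0(X,(\cA/\cO_X)^\vee) = \cA(X)\cap \ker(\Trd_\cA) = \cSkew_{\cA,\sigma}(X)\cap \ker(\Trd_\cA) = \cSkew_{\cA,\sigma}(X) = \mathrm{H}^0(X,(\cA/\cAlt_{\cA,\sigma})^\vee),$$
so the map on global sections of duals is zero, and Serre duality yields the claim. The most delicate point will be keeping the trace-pairing identifications consistent --- in particular verifying that $\cSkew_{\cA,\sigma}$ is globally contained in $\ker(\Trd_\cA)$, since without this the image of $(\cA/\cAlt_{\cA,\sigma})^\vee$ would not sit inside $\ker(\Trd_\cA)$ as demanded by the dualized sequence. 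Once this local-to-global verification is established, the remainder is formal from Serre duality and nondegeneracy of the trace pairing.
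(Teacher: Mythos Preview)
Your proof is correct and follows essentially the same strategy as the paper: Serre duality (using $K_X\cong\cO_X$) reduces the claim to a statement about global sections of dual sheaves, which is then verified via the reduced-trace pairing and the hypothesis that $\Id+\sigma$ vanishes on $\cA(X)$. The only difference is bookkeeping---the paper works with the commutative square $\cAlt_{\cA,\sigma}\hookrightarrow\cA$ over $\cAlt_{\cA,\sigma}/\cO_X\hookrightarrow\cA/\cO_X$ and invokes the self-duality of $\cAlt_{\cA,\sigma}$, while you work directly with the quotient sequence; note also that your ``delicate point'' $\cSkew_{\cA,\sigma}\subseteq\ker(\Trd_\cA)$ is automatic, since it is just the trace-orthogonal of the inclusion $\cO_X\subseteq\cAlt_{\cA,\sigma}$ given by local quadraticness.
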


\begin{proof}
Since $\sigma$ is assumed locally quadratic, we have $\mathcal{O}_X\subset \cSymd_{\mathcal{A},\sigma}=\cAlt_{\mathcal{A},\sigma}$. Consider, then, the canonical commutative square of sheaves below.
\begin{equation}\label{eq: altsq}
\begin{tikzcd}
\cAlt_{\mathcal{A},\sigma}\arrow{r}\arrow{d} & \mathcal{A}\arrow{d}\\
\cAlt_{\mathcal{A},\sigma}/\mathcal{O}_X\arrow{r} & \mathcal{A}/\mathcal{O}_X
\end{tikzcd}
\end{equation} Associated to \eqref{eq: altsq}, there is an induced commutative diagram of $k$-vector spaces \begin{equation}\label{eq: h2altsq}
\begin{tikzcd}
\mathrm{H}^2(X,\cAlt_{\mathcal{A},\sigma}) \arrow{r}\arrow{d} & \mathrm{H}^2(X,\mathcal{A})\arrow{d} \\
\mathrm{H}^2(X,\cAlt_{\mathcal{A},\sigma}/\mathcal{O}_X)\arrow{r} & \mathrm{H}^2(X,\mathcal{A}/\mathcal{O}_X).
\end{tikzcd}
\end{equation}

By Serre duality, the square \eqref{eq: h2altsq} is canonically isomorphic with the square below, gotten by dualizing the square of global sections of the dual of \eqref{eq: altsq}:

\begin{equation}
\label{sq: duals}
\begin{tikzcd}
\mathrm{H}^0(X,(\cAlt_{\mathcal{A},\sigma})^\vee)^\vee \arrow{r}\arrow{d} & \mathrm{H}^0(X,\mathcal{A}^\vee)^\vee\arrow{d} \\
\mathrm{H}^0(X,(\cAlt_{\mathcal{A},\sigma}/\mathcal{O}_X)^\vee)^\vee\arrow{r} & \mathrm{H}^0(X,(\mathcal{A}/\mathcal{O}_X)^\vee)^\vee.
\end{tikzcd}
\end{equation}

Now, the dual of the canonical inclusion $\cAlt_{\mathcal{A},\sigma}\subset \mathcal{A}$ can be identified using the isomorphism $\mathcal{A}\cong \mathcal{A}^\vee$ coming from the reduced trace, and the isomorphism $\cAlt_{\mathcal{A},\sigma}\cong \cAlt_{\mathcal{A},\sigma}^\vee$ coming from the existence of a nondegenerate bilinear form $b_{-}$, see \cite[2.7 Lemma (ii)]{GNR2023AzumayaObstructions}. The composition \[\mathcal{A}\cong \mathcal{A}^\vee\rightarrow \cAlt_{\mathcal{A},\sigma}^\vee\cong \cAlt_{\mathcal{A},\sigma}\] is the map defined on sections by $x\mapsto x+\sigma(x)$. Because of our assumptions, we have that the top arrow in \eqref{sq: duals} is therefore zero. Since the left vertical arrow of \eqref{sq: duals} is a surjection, it follows that the bottom horizontal arrow of \eqref{sq: duals} is zero too.
\end{proof}

\begin{thm}\label{thm: zero}
Let $k$ be a field of characteristic $2$.
Let $X$ be a smooth and projective surface over $k$. Assume that the canonical bundle of $X$ is trivial, i.e.\ $K_X\cong \mathcal{O}_X$. Lastly, fix an Azumaya algebra with quadratic pair $(\mathcal{A},\sigma,f)$ over $X$ such that both of the maps \begin{equation}\mathcal{A}(X)\xrightarrow{\mathrm{Id}+\sigma} \cAlt_{\mathcal{A},\sigma}(X)\quad \mbox{and}\quad \cSym_{\mathcal{A},\sigma}(X)\xrightarrow{f} \mathcal{O}_X(X)\end{equation} are identically zero.

Then, with these assumptions, the canonical map \begin{equation}\mathrm{H}^2(X,\mathfrak{pgo}_{(\mathcal{A},\sigma,f)})\rightarrow \mathrm{H}^2(X,\mathcal{A}/\mathcal{O}_X)\end{equation} is identically zero as well.
\end{thm}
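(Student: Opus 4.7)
The plan is to imitate the Serre-duality argument used in \Cref{lem: zero}, but applied to the canonical commutative diagram from the proof of \Cref{thm: liesections} that realizes $\mathfrak{pgo}_{(\mathcal{A},\sigma,f)}$ as a subsheaf of $\mathcal{A}/\mathcal{O}_X$. Specifically, recall the commutative diagram with exact rows
\[
\begin{tikzcd}
0 \ar[r] & \cAlt_{\mathcal{A},\sigma}/\mathcal{O}_X \ar[r] \ar[d,equals] & \mathfrak{pgo}_{(\mathcal{A},\sigma,f)} \ar[r] \ar[d,hook] & \mathcal{O}_X \ar[r] \ar[d,"i'"] & 0 \\
0 \ar[r] & \cAlt_{\mathcal{A},\sigma}/\mathcal{O}_X \ar[r] & \mathcal{A}/\mathcal{O}_X \ar[r,"1+\sigma"] & \mathcal{A}/\cAlt_{\mathcal{A},\sigma} \ar[r] & 0
\end{tikzcd}
\]
from \Cref{thm: liesections}, where $i'(1)=\ell$ for the unique section with $\ell+\sigma(\ell)=1$ and $f=\mathrm{Trd}_{\mathcal{A}}(\ell\cdot -)$ locally. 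Taking long exact cohomology sequences and chasing the right-hand commutative square, any class $\alpha \in \mathrm{H}^2(X,\mathfrak{pgo}_{(\mathcal{A},\sigma,f)})$ has image $\beta$ in $\mathrm{H}^2(X,\mathcal{A}/\mathcal{O}_X)$ whose image in $\mathrm{H}^2(X,\mathcal{A}/\cAlt_{\mathcal{A},\sigma})$ agrees with $i'_\ast$ applied to the image of $\alpha$ in $\mathrm{H}^2(X,\mathcal{O}_X)$. Hence it suffices to prove two facts: (i) that $\mathrm{H}^2(X,\cAlt_{\mathcal{A},\sigma}/\mathcal{O}_X)\to \mathrm{H}^2(X,\mathcal{A}/\mathcal{O}_X)$ is zero (\Cref{lem: zero}), so that $\beta$ is determined by its image in $\mathrm{H}^2(X,\mathcal{A}/\cAlt_{\mathcal{A},\sigma})$, and (ii) that $i'_\ast:\mathrm{H}^2(X,\mathcal{O}_X)\to \mathrm{H}^2(X,\mathcal{A}/\cAlt_{\mathcal{A},\sigma})$ is identically zero.

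Fact (i) is immediate from \Cref{lem: zero}. For (ii), the idea is to apply Serre duality (using $K_X \cong \mathcal{O}_X$) to reduce to showing that the dual map $(i')^\vee_\ast : \mathrm{H}^0(X,(\mathcal{A}/\cAlt_{\mathcal{A},\sigma})^\vee)\to \mathrm{H}^0(X,\mathcal{O}_X)$ is zero. To identify the domain, I dualize the exact sequence
\[
0\to \cAlt_{\mathcal{A},\sigma}\to \mathcal{A}\to \mathcal{A}/\cAlt_{\mathcal{A},\sigma}\to 0
\]
exactly as in the proof of \Cref{lem: zero}: via the reduced trace identification $\mathcal{A}\cong \mathcal{A}^\vee$ and the bilinear form identification $\cAlt_{\mathcal{A},\sigma}\cong (\cAlt_{\mathcal{A},\sigma})^\vee$, the dual of the inclusion $\cAlt_{\mathcal{A},\sigma}\subset \mathcal{A}$ becomes the map $x\mapsto x+\sigma(x)$, so $(\mathcal{A}/\cAlt_{\mathcal{A},\sigma})^\vee$ is identified with $\cSkew_{\mathcal{A},\sigma}=\cSym_{\mathcal{A},\sigma}$ (we are in characteristic $2$).

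The key computation, and the main place that requires care, is to track the dual $(i')^\vee$ through these identifications. A section $s\in \cSym_{\mathcal{A},\sigma}$ corresponds to the functional $\bar{x}\mapsto \mathrm{Trd}_{\mathcal{A}}(sx)$ on $\mathcal{A}/\cAlt_{\mathcal{A},\sigma}$ (well-defined because $\mathrm{Trd}_{\mathcal{A}}(s\cdot(y+\sigma(y)))=0$ in characteristic 2 when $\sigma(s)=s$), and so $(i')^\vee(s)$ evaluates this functional at $\ell$, giving $\mathrm{Trd}_{\mathcal{A}}(s\ell)=f(s)$ by the local description of the semi-trace. Therefore $(i')^\vee_\ast$ is exactly $f|_{\cSym_{\mathcal{A},\sigma}(X)}$, which vanishes by hypothesis, and the theorem follows. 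The main obstacle is simply making this identification of $(i')^\vee$ with $f$ precisely; once in place, the rest is formal.
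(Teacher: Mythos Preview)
Your proof is correct and follows essentially the same route as the paper's: both arguments use the commutative square expressing $\mathfrak{pgo}_{(\mathcal{A},\sigma,f)}$ as the pullback of $i'$ along $1+\sigma$, invoke \Cref{lem: zero} to make $\mathrm{H}^2(X,\mathcal{A}/\mathcal{O}_X)\to \mathrm{H}^2(X,\mathcal{A}/\cAlt_{\mathcal{A},\sigma})$ injective, and then kill the remaining contribution by Serre-dualizing $i'_\ast$ and identifying the resulting map $\cSym_{\mathcal{A},\sigma}(X)\to \mathcal{O}_X(X)$ with $f$. The paper packages the same diagram chase slightly differently (introducing the cokernel sheaf $\mathcal{N}_{(\mathcal{A},\sigma,f)}$ and labeling the maps $\varphi,\psi,\tau,\rho$), but the substance, including the identification $(i')^\vee=f$, is identical.
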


\begin{proof}
The Lie algebra sheaf $\mathfrak{pgo}_{(\mathcal{A},\sigma,f)}$ sits in a canonical extension \[0\rightarrow \cAlt_{\mathcal{A},\sigma}/\cO_X \rightarrow \mathfrak{pgo}_{(\mathcal{A},\sigma,f)}\rightarrow\mathcal{O}_X\rightarrow 0.\] Consider the commutative ladder with exact rows whose first column is the above extension. Here we define $\mathcal{N}_{(\mathcal{A},\sigma,f)}$ to be the appropriate locally free cokernel sheaf.
\begin{equation}\label{eq: snake}
\begin{tikzcd}
0 \arrow{r} & \cAlt_{\mathcal{A},\sigma}/\mathcal{O}_X\arrow{r}\arrow{d} & \mathcal{A}/\mathcal{O}_X\arrow{r}\arrow[equals]{d} & \mathcal{A}/\cAlt_{\mathcal{A},\sigma}\arrow{d}\arrow{r} & 0\\
0\arrow{r} & \mathfrak{pgo}_{(\mathcal{A},\sigma,f)}\arrow{r}\arrow{d} & \mathcal{A}/\mathcal{O}_X\arrow{r} & \mathcal{N}_{(\mathcal{A},\sigma,f)}\arrow{r} & 0\\ & \mathcal{O}_X & & &
\end{tikzcd}
\end{equation}
By the snake lemma, the kernel of the map $\mathcal{A}/\cAlt_{\mathcal{A},\sigma}\rightarrow \mathcal{N}_{(\mathcal{A},\sigma,f)}$ is isomorphic with $\mathcal{O}_X$. The inclusion of this kernel $\mathcal{O}_X\subset\mathcal{A}/\cAlt_{\mathcal{A},\sigma}$ is identified with the map sending $1$ to the element $\ell\in \mathcal{A}/\cAlt_{\mathcal{A},\sigma}(X)$ with $\ell+\sigma(\ell)=1$ which defines $f$. There is an induced commutative diagram with exact rows and columns as below, obtained by applying cohomology to \eqref{eq: snake}.
\begin{equation}\label{eq: cohsnake}
\begin{tikzcd}[column sep =small]
  & & \mathrm{H}^2(X,\mathcal{O}_X)\arrow["\tau"]{d}\\
\mathrm{H}^2(X,\cAlt_{\mathcal{A},\sigma}/\mathcal{O}_X)\arrow["\varphi"]{r}\arrow{d} & \mathrm{H}^2(X,\mathcal{A}/\mathcal{O}_X)\arrow["\psi"]{r}\arrow[equals]{d} & \mathrm{H}^2(X,\mathcal{A}/\cAlt_{\mathcal{A},\sigma})\arrow{d}\arrow{r} & 0\\
\mathrm{H}^2(X,\mathfrak{pgo}_{(\mathcal{A},\sigma,f)})\arrow["\rho"]{r}\arrow{d} & \mathrm{H}^2(X,\mathcal{A}/\mathcal{O}_X)\arrow{r} & \mathrm{H}^2(X,\mathcal{N}_{(\mathcal{A},\sigma,f)})\arrow{r}\arrow{d} & 0\\  \mathrm{H}^2(X,\mathcal{O}_X)\arrow{d} & & 0 &\\
0
\end{tikzcd}
\end{equation}

By \Cref{lem: zero}, the map $\varphi$ in \eqref{eq: cohsnake} is zero. Hence $\psi$ in \eqref{eq: cohsnake} is an isomorphism. If we could show that $\tau$ is also zero, it then follows that the map $\rho$ is zero as well, as claimed in the theorem statement.

By Serre duality, the map $\tau$ is dual to the map \[\mathrm{H}^0(X,(\mathcal{A}/\cAlt_{\mathcal{A},\sigma})^\vee)\rightarrow \mathrm{H}^0(X,\mathcal{O}_X)\] defined by sending a functional $g$ to $g(\ell)$. Note also that the inclusion $\cAlt_{\mathcal{A},\sigma}\subset \mathcal{A}$ yields the exact sequence of sheaves \[0\rightarrow (\mathcal{A}/\cAlt_{\mathcal{A},\sigma})^\vee \rightarrow \mathcal{A}^\vee\rightarrow \cAlt_{\mathcal{A},\sigma}^\vee\rightarrow 0.\] This implies that a functional $g$ on $\mathcal{A}/\cAlt_{\mathcal{A},\sigma}$ has the form $g=\mathrm{trd}_\mathcal{A}(a\cdot -)$ for some $a\in \mathcal{A}(X)$. By our assumptions, we have $a+\sigma(a)=0$, i.e.\ $a\in \cSym_{\mathcal{A},\sigma}(X)$. Hence, by the definition of the semi-trace $f$, we have \[g(\ell)=\mathrm{trd}_{\mathcal{A}}(a\ell)=f(a).\] Since we are assuming that $f$ is trivial on global sections, we find that $\tau$ is zero. As explained above, this concludes the proof.
\end{proof}

\begin{rem}\label{rem: assumptions}
Let $k$ be a field of characteristic $2$. Let $X$ be a smooth, projective $k$-surface with $K_X\cong \mathcal{O}_X$. Let $(\mathcal{A},\sigma,f)$ be a quadratic triple on $X$ with $4\mid \mathrm{deg}(\mathcal{A})$ and suppose that $\mathcal{A}(X)=k$. Then $(\mathcal{A},\sigma,f)$ satisfies the assumptions of \Cref{thm: zero}. Indeed, $1+\sigma(1)=0$ trivially and $f(1)=0$ by \cite[Corollary 3.8]{GNR2023AzumayaObstructions}.
\end{rem}

\begin{rem}\label{rem: specialcase}
Let $k$ be a field of characteristic $2$. Let $X$ be a smooth, projective $k$-surface with $K_X\cong \mathcal{O}_X$. Let $\mathcal{A}$ and $\mathcal{B}$ be two quaternion Azumaya algebras on $X$ and suppose that $(\mathcal{A}\otimes\mathcal{B})(X)=k$. In this case, there is a canonical quadratic triple $(\mathcal{A}\otimes\mathcal{B},\sigma,f)$ and an isomorphism $\mathfrak{pgo}_{(\mathcal{A}\otimes \mathcal{B},\sigma,f)}\cong \mathcal{A}/\mathcal{O}_X\oplus \mathcal{B}/\mathcal{O}_X$ which follows from the stack equivalence of \Cref{norm_functor_equiv}. Thus \Cref{cor: quartvan} is a special case of \Cref{thm: zero}.
\end{rem}

\section{A quadratic pair with obstructed deformations}\label{explicit_example}
In this section we prove \Cref{thm: main} which shows that, in characteristic 2, there exist quadratic triples with nonvanishing obstructions to deformation such that when one forgets the quadratic pair, the underlying Azumaya algebra has vanishing obstruction.

\subsection{Quaternion algebras on an Igusa surface}
Throughout the remainder of this text we fix a field $k$ which is both algebraically closed and of characteristic $2$. We also fix a pair of ordinary elliptic curves $(E_1,p_1)$ and $(E_2,p_2)$ over $k$, which are allowed to be equal, and we set $Y=E_1\times_k E_2$. There is a free action of $\mathbb{Z}/2\mathbb{Z}$ on $Y$ defined by letting the nonunit element act via the involution $i\colon Y \to Y$ given by $(x,y)\mapsto (-x,y+t)$ for the unique $2$-torsion point $p_2\neq t\in E_2[2](k)$. Set $\pi:Y\rightarrow X$ to be the associated quotient of this action. By construction, $X$ is an Igusa surface.

We are going to construct two quaternion Azumaya algebras on $X$. The first is constructed in the following way. Let
\begin{equation}\label{eq: unique_bundle}
0\rightarrow \mathcal{O}_{E_1}\rightarrow \mathcal{V}_1\rightarrow \mathcal{O}_{E_1}(p_1)\rightarrow 0
\end{equation}
be the unique non-split extension on $E_1$ (which was discussed in \Cref{exmp: e2q}). The sheaf $\mathcal{V}_1$ is an indecomposable locally free sheaf of rank 2. Pulling back to $Y$ along the projection $\pi_1:Y=E_1\times E_2\rightarrow E_1$ yields an extension \begin{equation}
0\rightarrow \mathcal{O}_Y\rightarrow \pi_1^*\mathcal{V}_1\rightarrow \mathcal{O}_Y(p_1\times E_2)\rightarrow 0.
\end{equation}
Because we have a commutative diagram
\[
\begin{tikzcd}
Y \ar[d,"i"] \ar[r,"\pi_1"] & E_1 \ar[d,"i_0"] \\
Y \ar[r,"\pi_1"] & E_1
\end{tikzcd}
\]
where $i_0(x)=-x$, we have that $i^*(\pi_1^*(\cV_1)) = \pi_1^*(i_0^*(\cV_1))$. However, it is clear that $i_0^*(\cO_{E_1}) = \cO_{E_1}$ and that $i_0^*(\cO_{E_1}(p_1)) = \cO_{E_1}(p_1)$ because $i_0(p_1)=p_1$, and so by the uniqueness of \eqref{eq: unique_bundle}, we have that $i_0^*(\cV_1)=\cV_1$. In turn, we conclude that
\[
i^*(\pi_1^*(\cV_1)) = \pi_1^*(\cV_1).
\]
We now use the \'etale cover $\pi \colon Y \to X$ and glue the objects in \eqref{eq: unique_bundle} to themselves to define objects on $X$. First, we have a fiber product diagram
\[
\begin{tikzcd}
Y\times_k \ZZ/2\ZZ \ar[r,"\pr_1"] \ar[d,"\alpha"] & Y \ar[d,"\pi"]  \\
Y \ar[r,"\pi"] & X
\end{tikzcd}\quad\mbox{where}\quad \begin{aligned}
\alpha \colon Y\times_k \ZZ/2\ZZ &\to Y \\
(y,0) &\mapsto y\\
(y,1) & \mapsto i(y)
\end{aligned}
\]
i.e., $\ZZ/2\ZZ$ acts on $Y$ via the involution $i \colon Y \to Y$. Alternatively, there is a canonical isomorphism $Y\times_X Y \cong Y\sqcup Y$ and then the maps $\pr_1$ and $\alpha$ appear as
\[
\pr_1\colon \begin{tikzcd}[column sep = -1.5ex]
Y \ar[dr,swap,near start,"\Id"] & \sqcup & Y \ar[dl,near start,"\Id"] \\
 & Y &
\end{tikzcd} \quad\quad\mbox{and}\quad\quad
\alpha \colon \begin{tikzcd}[column sep = -1.5ex]
Y \ar[dr,swap,near start,"\Id"] & \sqcup & Y \ar[dl,near start,"i"] \\
 & Y &
\end{tikzcd}.
\]
A sheaf $\cF$ on $Y\sqcup Y$ is of the form $\cF(U\sqcup V) = \cF_1(U)\times \cF_2(V)$ for two sheaves $\cF_1,\cF_2$ on $Y$. For such a sheaf, we write $\cF = \cF_1\sqcup \cF_2$. Gluing data for $\pi_1^*(\cV_1)$ consists of an isomorphism
\[
\pr_1^*(\pi_1^*(\cV_1)) \iso \alpha^*(\pi_1^*(\cV_1))
\]
which satisfies the cocycle condition. However, we have that
\begin{align*}
\pr_1^*(\pi_1^*(\cV_1)) &= \pi_1^*(\cV_1)\sqcup \pi_1^*(\cV_1), \text{ and}\\
\alpha^*(\pi_1^*(\cV_1)) &= \pi_1^*(\cV_1)\sqcup i^*(\pi_1^*(\cV_1)) = \pi_1^*(\cV_1)\sqcup \pi_1^*(\cV_1)
\end{align*}
so we may simply take the identity as our cocycle. 

For similar reasons, the identity is also a valid gluing data for $\cO(p_1\times E_2)$ and of course for $\cO_Y$. After using this gluing data to descend to $X$, we obtain an extension of $\cO_X$--modules
\begin{equation}\label{eq: extv}
0\rightarrow \mathcal{O}_X\rightarrow \mathcal{V}\rightarrow \mathcal{L}\rightarrow 0
\end{equation}
where $\mathcal{V}$ is a locally free module of rank 2 such that $\pi^*(\mathcal{V}) = \pi_1^*(\mathcal{V}_1)$ and $\cL$ is a line bundle with $\pi^*(\mathcal{L}) = \mathcal{O}_Y(p_1\times E_2)$. The first Azumaya algebra that we consider on $X$ is the neutral endomorphism algebra $\mathcal{Q}:=\cEnd(\mathcal{V})$.

For later calculations, we want to describe $\cQ$ with a cocycle coming from $\PGL_2$ and some cover of $X$. As in \Cref{exmp: e2q}, set $E_1' = E_1$ and consider the fppf cover $m_0 \colon E_1'\to E_1$ which is multiplication by $2$. We have that $\cQ_1 = \cEnd_{\cO_{E_1}}(\cV_1)$ is split by this map, i.e., $m_0^*(\cQ_1) \cong \Mat_2(\cO_{E_1'})$. We consider two other maps related to this multiplication by $2$ map on $E_1$. 

First, let $Y'=Y$ and let $m \colon Y' = E_1\times_k E_2 \to E_1\times_k E_2 = Y$ be the map which is multiplication by $2$ on the first factor. Second, there is a map $m_X \colon X' \to X$ given by $m_X([a,b])=[2a,b]$, which is well defined since $m\circ i = i \circ m$ on $Y$. Using the same techniques as in \cite{GNR2023AzumayaObstructions}, one can see that $Y'\times_Y Y' \cong Y'\times_k (\mu_2\times \ZZ/2\ZZ)$ and that under this isomorphism, the second projection $\pr_2\colon Y'\times_Y Y' \to Y'$ appears as
\begin{align*}
\beta \colon Y'\times_k (\mu_2\times \ZZ/2\ZZ) &\to Y' \\
((a,b),\gamma) &\mapsto (a+\gamma,b).
\end{align*}
Likewise, we have that $X'\times_X X' \cong X'\times_k (\mu_2\times \ZZ/2\ZZ)$ with second projection
\begin{align*}
\beta_X \colon X'\times_k (\mu_2\times \ZZ/2\ZZ) &\to X' \\
([a,b],\gamma) &\mapsto [a+\gamma,b].
\end{align*}
This is well-defined since $i(a+\gamma,b) = (-a+\gamma,b+t)$ because $-\gamma = \gamma$ for any $\gamma$ of the $2$-torsion group scheme $\mu_2\times\ZZ/2\ZZ$. Hence, these maps fit into a diagram
\begin{equation}\label{eq: times_2_covers}
\begin{tikzcd}
E_1'\times_k (\mu_2\times\ZZ/2\ZZ) \ar[r,yshift=0.5ex,"\pr_1"] \ar[r,yshift=-0.5ex] & E_1' \ar[r,"m_0"] & E_1 \\
Y'\times_k (\mu_2\times\ZZ/2\ZZ) \ar[r,yshift=0.5ex,"\pr_1"] \ar[r,yshift=-0.5ex,swap,"\beta"] \ar[u,,swap,"\pi_1\times\Id"] \ar[d,"\pi\times\Id"] & Y' \ar[u,swap,"\pi_1"] \ar[r,"m"] \ar[d,"\pi"] & Y \ar[u,swap,"\pi_1"] \ar[d,"\pi"] \\
X'\times_k (\mu_2\times\ZZ/2\ZZ) \ar[r,yshift=0.5ex,"\pr_1"] \ar[r,yshift=-0.5ex,swap,"\beta_X"] & X' \ar[r,"m_X"] & X
\end{tikzcd}
\end{equation}
which commutes when only the first projections are considered from the double arrows, as well as when the other maps are taken from each pair of double arrows.

\begin{lem}\label{cocyle_for_Q}
The quaternion algebra $\cQ$ constructed above on $X$ is split by the \'etale cover $m_X \colon X' \to X$, i.e., $m_X^*(\cQ) \cong \Mat_2(\cO_{X'})$. In particular, $\cQ$ is defined by the $1$--cocycle
\begin{align*}
\phi=\Inn\left(\begin{bmatrix} 0 & 1 \\ x & 0 \end{bmatrix}, \begin{bmatrix} 1 & 0 \\ 0 & x \end{bmatrix} \right)&\in \PGL_2\left((k[x]/(x^2-1))^2\right) \\
&= \PGL_2(X'\times_k (\mu_2\times\ZZ/2\ZZ)).
\end{align*}
\end{lem}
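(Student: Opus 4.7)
The plan is to prove both assertions together by transporting the cocycle from the analogous situation on $E_1$ (recorded in Example \ref{exmp: e2q}) through diagram \eqref{eq: times_2_covers}. First I would check that $\phi$ satisfies the $1$-cocycle condition in $Z^1(X'/X, \PGL_2)$: since $X' \times_X X' \times_X X' \cong X' \times_k (\mu_2\times\ZZ/2\ZZ)^2$, this reduces to a matrix calculation using $x^2 = 1$, essentially identical to the one implicit in Example \ref{exmp: e2q}. Let $\cQ^\flat$ denote the Azumaya $\cO_X$-algebra obtained by descent from $(\Mat_2(\cO_{X'}), \phi)$.

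Next I would identify $\cQ^\flat$ with $\cQ$ by pulling both back to $Y$. The key observation is that the right-hand square of \eqref{eq: times_2_covers},
\[
\begin{tikzcd}
Y' \ar[r,"m"] \ar[d,"\pi"] & Y \ar[d,"\pi"] \\
X' \ar[r,"m_X"] & X
\end{tikzcd}
\]
is Cartesian: both vertical maps are the free $\ZZ/2\ZZ$-quotient by $(a,b)\mapsto(-a,b+t)$, and $m_X$ is the quotient of $m$ by this action, so a direct fiber-by-fiber check shows that the natural map $Y' \to X'\times_X Y$ is an isomorphism. Consequently, pulling back the cover $m_X$ along $\pi\colon Y\to X$ returns the cover $m\colon Y'\to Y$, and the cocycle $\phi$ pulls back to the same matrix expression on $Y'\times_Y Y' \cong Y'\times_k(\mu_2\times\ZZ/2\ZZ)$. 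That pulled-back cocycle is precisely the $(\pi_1\times\Id)$-pullback of the cocycle $\phi_1$ from Example \ref{exmp: e2q} defining $\cQ_1$ on $E_1$ via $m_0$. Hence $\pi^*(\cQ^\flat) \cong \pi_1^*(\cQ_1) = \cEnd(\pi_1^*\cV_1) = \pi^*(\cQ)$ as Azumaya $\cO_Y$-algebras.

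To upgrade this to an isomorphism $\cQ^\flat \cong \cQ$ on $X$, one verifies that the descent data along $\pi\colon Y\to X$ match. The descent data for $\cQ$ is the canonical identity coming from $i^*(\pi_1^*\cV_1) = \pi_1^*(i_0^*\cV_1) = \pi_1^*(\cV_1)$, which uses the uniqueness of the extension \eqref{eq: unique_bundle} together with $i_0(p_1)=p_1$. The descent data for $\cQ^\flat$ is read off from the cocycle on $X' \times_X X'$ over the $\ZZ/2\ZZ$-component corresponding to $\pi$, which one sees is likewise trivial. The main technical obstacle I anticipate is careful bookkeeping of which factor of $\mu_2 \times \ZZ/2\ZZ$ encodes descent along $\pi$ versus splitting along $m$; once the identifications in \eqref{eq: times_2_covers} are unwound, the compatibility of descent data is routine and fppf descent yields the desired isomorphism.
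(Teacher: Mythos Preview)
Your overall strategy---construct $\cQ^\flat$ from the cocycle $\phi$, pull back along $\pi$, and compare with $\cQ$---is reasonable, and your verification that the lower-right square of \eqref{eq: times_2_covers} is Cartesian is correct and needed.  The gap is in the last paragraph.  You write that the descent datum for $\cQ^\flat$ along $\pi$ can be ``read off from the cocycle on $X'\times_X X'$ over the $\ZZ/2\ZZ$-component corresponding to $\pi$'' and that the obstacle is bookkeeping ``which factor of $\mu_2\times\ZZ/2\ZZ$ encodes descent along $\pi$.''  Neither factor does: the group $\mu_2\times\ZZ/2\ZZ$ is the $2$-torsion $E_1[2]$ and governs the cover $m_X$ exclusively, whereas the $\ZZ/2\ZZ$ acting by $(a,b)\mapsto(-a,b+t)$ to produce $\pi$ is an entirely separate group.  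So there is no component of your cocycle that records the $\pi$-descent, and checking that the specific isomorphism $\pi^*\cQ^\flat\cong\pi_1^*\cQ_1\cong\pi^*\cQ$ intertwines the canonical descent data on both sides is a genuine verification, not routine bookkeeping.  Concretely, on the nontrivial sheet of $Y\times_X Y\cong Y\sqcup Y$ you must show $i^*\theta=\theta$ under the canonical identifications, and nothing in your outline establishes this.

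The paper circumvents this difficulty by reversing the logic.  It first shows directly that $m_X^*\cQ$ is split: since $\cQ$ is obtained by gluing $\pi_1^*\cQ_1$ along the identity over $Y\times_X Y$, and the lower-right square is Cartesian, $m_X^*\cQ$ is obtained by gluing $m^*\pi_1^*\cQ_1\cong\Mat_2(\cO_{Y'})$ along the identity over $Y'\times_{X'}Y'$, which yields $\Mat_2(\cO_{X'})$.  Hence $\cQ$ is represented by \emph{some} cocycle $\psi\in\PGL_2(X'\times_k(\mu_2\times\ZZ/2\ZZ))$.  Then, because the global sections of $X'\times_k(\mu_2\times\ZZ/2\ZZ)$ and $Y'\times_k(\mu_2\times\ZZ/2\ZZ)$ coincide, the pullback map $(\pi\times\Id)^*$ on $\PGL_2$ is literally the identity, so $\psi$ is forced to equal its pullback to $Y'$, which is $\phi$.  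This injectivity trick replaces your descent-compatibility check entirely and is the cleaner route.
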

\begin{proof}
We begin by noting that all the schemes in the left column of \eqref{eq: times_2_covers} have the same global sections, namely $k[x]/(x^2-1)\otimes(k\times k) \cong \left(k[x]/(x^2-1)\right)^2$, and that the restriction maps along both $\pi_1\times\Id$ and $\pi\times\Id$ are the identity on those global sections. Likewise, $\cO_X(X) = \cO_Y(Y) = \cO_{E_1}(E_1) = k$ and the restrictions along $\pi_1$ and $\pi$ are the identity on $k$.

By definition, $\cQ_1$ is split over $E_1'$ and is defined by the cocycle
\[
\phi = \Inn\left(\begin{bmatrix} 0 & 1 \\ x & 0 \end{bmatrix}, \begin{bmatrix} 1 & 0 \\ 0 & x \end{bmatrix} \right)\in \PGL_2(E_1'\times_{E_1} E_1').
\]
This means that $\pi_1^*(\cQ_1)$ is defined by this cocycle pulled back along $\pi_1\times\Id$, which yields ``the same" cocycle since the pullback along $\pi_1\times\Id$ is the identity. We now consider the following diagram, the bottom two rows of which are from \eqref{eq: times_2_covers}.
\[
\begin{tikzcd}
 & Y'\times_{X'} Y' \ar[r,"m\times m"] \ar[d,xshift=-0.5ex] \ar[d,xshift=0.5ex] & Y\times_X Y \ar[d,xshift=-0.5ex] \ar[d,xshift=0.5ex] \\
Y'\times_k (\mu_2\times\ZZ/2\ZZ) \ar[r,yshift=0.5ex,"\pr_1"] \ar[r,yshift=-0.5ex,swap,"\beta"] \ar[d,"\pi\times\Id"] & Y' \ar[d,"\pi"] \ar[r,"m"] & Y \ar[d,"\pi"] \\
X'\times_k (\mu_2\times\ZZ/2\ZZ) \ar[r,yshift=0.5ex,"\pr_1"] \ar[r,yshift=-0.5ex,swap,"\beta_X"] & X' \ar[r,"m_X"] & X
\end{tikzcd}
\]
First, we show that $\cQ$ is split over $X'$. The algebra $\cQ$ is defined by gluing $\pi_1^*(\cQ_1)$ on $Y$ with the identity isomorphism over $Y\times_X Y$. Hence, the pullback $m_X^*(\cQ)$ on $X'$ will be defined by the pullback of this gluing information. In particular, it will be defined by gluing $m^*(\pi_1^*(\cQ_1))$ along the identity over $Y'\times_{X'} Y'$. However, we know that
\[
m^*(\pi_1^*(\cQ_1)) = \Mat_2(\cO_{Y'}),
\]
and so when we glue the matrix algebra to itself with the identity, we simply obtain the matrix algebra $\Mat_2(\cO_{X'})$ on $X'$. Thus $m_X^*(\cQ) = \Mat_2(\cO_{X'})$.

It is now almost immediate that $\cQ$ is defined by the given cocycle. Since $\cQ$ splits over $X'$, it is defined by some cocycle $\psi \in \PGL_2(X'\times_k (\mu_2\times\ZZ/2\ZZ))$. Additionally, because $\pi^*(\cQ) = \pi_1^*(\cQ_1)$, the pullback of $\psi$ along $\pi\times\Id$ must be $\phi$. However, the pullback
\[
\begin{tikzcd}
(\pi\times\Id)^* \colon &[-3.5em] \PGL_2((k[X]/(x^2-1))^2) \ar[r] \ar[d,equals] & \PGL_2((k[X]/(x^2-1))^2) \ar[d,equals] \\[-1 em]
& \PGL_2(X'\times(\mu_2\times\ZZ/2\ZZ)) & \PGL_2(Y'\times(\mu_2\times\ZZ/2\ZZ))
\end{tikzcd}
\]
is also simply the identity and so $\psi = \phi$. Hence, $\phi$ defines $\cQ$ on $X$ with respect to the cover $X' \to X$ as claimed, and we are done.
\end{proof}

\begin{cor}\label{global_sections_Q}
Let $\cQ=\cEnd(\mathcal{V})$ be the quaternion algebra as defined above on $X$. Then, the following hold: 
\begin{enumerate}
\item $\cQ(X) = k$;
\item there is a locally quadratic orthogonal involution $\sigma_1$ on $\cQ$;
\item there exists a semi-trace $f_1$ making $(\cQ,\sigma_1,f_1)$ a quadratic triple.
\end{enumerate}
\end{cor}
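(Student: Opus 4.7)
The plan is to prove the three claims using the cocycle description of $\cQ$ from \Cref{cocyle_for_Q} combined with the descent construction of $\cV$ from $\pi_1^*\cV_1$.

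For (i), I would compute $\cQ(X)=\mathrm{End}_{\cO_X}(\cV)$ by descent. Since $\pi\colon Y\to X$ is an \'etale $\ZZ/2\ZZ$-cover, Galois descent identifies $\mathrm{End}_{\cO_X}(\cV)$ with $\mathrm{End}_{\cO_Y}(\pi^*\cV)^{\ZZ/2\ZZ}$, and since $\pi^*\cV=\pi_1^*\cV_1$ the K\"unneth formula together with $H^0(E_2,\cO_{E_2})=k$ reduces this to $\mathrm{End}_{\cO_{E_1}}(\cV_1)^{\ZZ/2\ZZ}$. The remaining substantive point is the simplicity of $\cV_1$, which I plan to establish via slope stability: any rank one subbundle $\cM\subset \cV_1$ with $\deg\cM\ge 1$ would give a quotient line bundle of degree $\le 0$, and the corresponding $\Ext^1$ vanishes by Serre duality on $E_1$, forcing a splitting and contradicting the indecomposability of $\cV_1$. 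Hence $\cV_1$ is stable, so $\mathrm{End}_{\cO_{E_1}}(\cV_1)=k$; since any descent datum acts trivially on scalars, we conclude $\cQ(X)=k$.

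For (ii) and (iii), I would verify that the cocycle $\phi$ from \Cref{cocyle_for_Q} takes values in $\PGO_2\subseteq \PGL_2$, where $\PGO_2$ is the automorphism group of the standard degree $2$ quadratic triple $(\Mat_2,\sigma_2,f_2)$. Writing $J=\left(\begin{smallmatrix} 0 & 1 \\ 1 & 0\end{smallmatrix}\right)$ so that $\sigma_2(A)=JA^TJ$, a direct $2\times 2$ matrix computation with $b=\left(\begin{smallmatrix} 0 & 1 \\ x & 0\end{smallmatrix}\right)$ and $c=\left(\begin{smallmatrix} 1 & 0 \\ 0 & x\end{smallmatrix}\right)$ gives $b^TJb=c^TJc=xJ$, which says precisely that $b$ and $c$ are similitudes of $J$, equivalently that $\Inn(b)$ and $\Inn(c)$ commute with $\sigma_2$. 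A second short computation on symmetric matrices $\left(\begin{smallmatrix} a & s_1 \\ s_2 & a\end{smallmatrix}\right)$ shows that conjugation by either $b$ or $c$ preserves the $(1,1)$-entry, and hence preserves the semi-trace $f_2$. Combining these two verifications places $\phi$ inside $\PGO_2$, so the quadratic pair $(\sigma_2,f_2)$ descends along $\phi$ to a quadratic pair $(\sigma_1,f_1)$ on $\cQ$, giving the existence statements (ii) and (iii).

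To finish, $\sigma_1$ is orthogonal by construction, and it is locally quadratic because \'etale locally it is $\sigma_2$ on $\Mat_2(\cO_X)$, where in characteristic $2$ one has $(\Id+\sigma_2)\left(\begin{smallmatrix}1 & 0 \\ 0 & 0\end{smallmatrix}\right)=I$, so $1\in\cSymd_{\Mat_2,\sigma_2}$ locally. The main obstacle is the simplicity of $\cV_1$ in (i); the descent of the quadratic pair in (ii) and (iii) then reduces to the routine matrix calculations outlined above.
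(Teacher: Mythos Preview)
Your argument is correct. For (ii) and (iii) it is essentially the paper's approach: the paper simply invokes the computations in \cite[6.2]{GNR2023AzumayaObstructions}, which amount precisely to verifying that the cocycle $\phi$ lands in $\PGO_2$ and hence that the split quadratic pair $(\sigma_2,f_2)$ descends, exactly as you do by the similitude and $(1,1)$--entry checks.

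For (i) your route differs from the paper's. The paper (via the reference to \cite{GNR2023AzumayaObstructions}) computes $\cQ(X)$ directly from the equalizer sequence for the fppf cover $X'\to X$, using only that $\cO_{X'}(X')=k$ and $\cO_{X'\times_X X'}(X'\times_X X')=(k[x]/(x^2-1))^2$; this is a purely algebraic computation with $2\times 2$ matrices over $k[x]/(x^2-1)$, independent of the geometry of $E_1$. You instead pass through the \'etale $\ZZ/2\ZZ$--cover $\pi\colon Y\to X$, use K\"unneth to reduce to $\mathrm{End}_{\cO_{E_1}}(\cV_1)$, and then prove $\cV_1$ is simple via a slope/stability argument. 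Your approach is more geometric and explains \emph{why} $\cQ(X)=k$ in terms of the classical fact that the Atiyah bundle $\cV_1$ of degree $1$ is stable; the paper's approach is quicker and more uniform (it uses nothing about $\cV_1$ beyond the cocycle already established in \Cref{cocyle_for_Q}) and simultaneously yields the material needed for (ii) and (iii). Both are valid.
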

\begin{proof}
We know from \Cref{cocyle_for_Q} that $\cQ$ is defined by the cocycle $\phi$ over the cover $X' \to X$. Since we also have that $\cO_{X'}(X')= k$ and
\[
\cO_{X'\times(\mu_2\times\ZZ/2\ZZ)}(X'\times(\mu_2\times\ZZ/2\ZZ)) = (k[x]/(x^2-1))^2,
\]
we may repeat the calculations appearing in \cite[6.2]{GNR2023AzumayaObstructions}.
\end{proof}

The second Azumaya algebra on $X$ that we consider is constructed as follows. Let $p_1\neq s\in E_1[3](k)$ be a nontrivial $3$-torsion point, so that there is a natural surjection $E_1 \surj E_1/\langle s \rangle$ with kernel $\ZZ/3\ZZ$. We consider the dual of these abelian varities and get another surjection $\phi:E=(E_1/\langle s \rangle)^\vee\rightarrow E_1^\vee \cong E_1$ which has kernel $(\ZZ/3\ZZ)^\vee = \mu_3$. Since we are in characteristic $2$, we have $\mu_3 \cong \ZZ/3\ZZ$ and so $\phi$ is a $\mathbb{Z}/3\mathbb{Z}$-cover of $E_1$. Then, the cover \begin{equation} Z=E\times E_2\xrightarrow{\phi\times \mathrm{id}} E_1\times E_2\xrightarrow{\pi} X\end{equation} is an $S_3=\PGL_2(\mathbb{F}_2)$-cover of $X$. Indeed, let $S_3$ be generated by elements $\rho,\tau\in S_3$ with relations $\rho^3=e$, $\tau^2=e$, and $\rho\tau\rho\tau=e$ for the unit $e$ of $S_3$. There is an action of $S_3$ on $Z$ over $X$ given by \begin{equation}\label{eq: s3tors}
\begin{aligned}\mu:S_3\times_k Z\rightarrow Z,\quad\quad\quad
& \mu(\rho,(x,y))=(x+s,y)\\
& \mu(\tau,(x,y))=(-x,y+t).
\end{aligned}
\end{equation}
One can immediately check that the action \eqref{eq: s3tors} gives $Z$ the structure of an $S_3$-torsor over $X$. Following \Cref{rem: torsquat}, we construct from this torsor a quaternion Azumaya algebra $\mathcal{P}$ on $X$. An easy computation using the cocycle given in \Cref{rem: torsquat} and the facts that $\cO_Z(Z)=k$ and $\cO_{Z\times_X Z}(Z\times_X Z)\cong k^6$ shows that $\cP(X)=k$. We also note that, since $\GL_2(\mathbb{F}_2)=\PGL_2(\mathbb{F}_2)$, this Azumaya algebra is neutral, i.e., $\mathcal{P}=\cEnd(\mathcal{W})$ for a locally free sheaf $\mathcal{W}$ of rank $2$ on $X$.

\begin{lem}\label{lem: globalsectionsofqp}
The algebra $\mathcal{Q}\otimes \mathcal{P}$ has only constant global sections, $(\mathcal{Q}\otimes \mathcal{P})(X)=k$. In particular, by \Cref{rem: assumptions}, the canonical quadratic pair $(\sigma_0,f_0)$ on $\mathcal{Q}\otimes \mathcal{P}$ satisfies the assumptions of \Cref{thm: zero}.
\end{lem}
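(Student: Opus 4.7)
The plan is to reduce $(\mathcal{Q}\otimes\mathcal{P})(X)$ to an explicit cocycle invariance problem in $\Mat_4$, in the spirit of the calculations in \Cref{exmp: e2q} and \Cref{rem: torsquat}. First, I would take $W := X'\times_X Z$ as a common fppf cover splitting both $\mathcal{Q}$ and $\mathcal{P}$, so that $(\mathcal{Q}\otimes\mathcal{P})|_W \cong \Mat_2(\mathcal{O}_W)\otimes\Mat_2(\mathcal{O}_W) \cong \Mat_4(\mathcal{O}_W)$. Using the identifications $X'\times_X X'\cong X'\times_k(\mu_2\times\ZZ/2\ZZ)$ preceding \eqref{eq: times_2_covers} and $Z\times_X Z\cong Z\times_k S_3$ from \eqref{eq: s3tors}, the double fiber product has the form $W\times_X W \cong W\times_k \bigl((\mu_2\times\ZZ/2\ZZ)\times S_3\bigr)$. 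On this, the cocycle describing $\mathcal{Q}\otimes\mathcal{P}$ is the tensor product of the pulled-back cocycles, and at the point indexed by $((z,\varepsilon),g)$ takes the form $\phi(z,\varepsilon)\otimes g\in\PGL_4$, with $\phi$ the cocycle of \Cref{cocyle_for_Q} and $\gamma=(g)_{g\in S_3}$ the cocycle of \Cref{rem: torsquat}.

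With this setup, a global section of $\mathcal{Q}\otimes\mathcal{P}$ corresponds to a matrix $B\in\Mat_4(\mathcal{O}_W(W))$ (which, since $W$ is proper over the algebraically closed field $k$, is locally constant on connected components) satisfying the cocycle invariance condition with respect to $\phi(z,\varepsilon)\otimes g$ at every indexing point. I would first specialize the $\mu_2\times\ZZ/2\ZZ$ index to the unit $(1,0)$ and let $g$ vary over $S_3$; this forces $B$ to commute with $\mathrm{I}_2\otimes g$ for every $g\in S_3$. Because the six elements of $S_3=\GL_2(\mathbb{F}_2)$ span $\Mat_2(\mathbb{F}_2)$, and hence $\Mat_2(k)$, as a $k$-algebra, the centralizer argument yields $B = C\otimes\mathrm{I}_2$ for some $C\in\Mat_2(\mathcal{O}_W(W))$. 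I would then specialize $g=e$ and let $(z,\varepsilon)$ vary; the condition that $C\otimes\mathrm{I}_2$ commute with $\phi(z,\varepsilon)\otimes\mathrm{I}_2$ is exactly the cocycle condition yielding the computation $\mathcal{Q}(X)=k$ in \Cref{global_sections_Q}, forcing $C \in k\cdot\mathrm{I}_2$ and hence $B \in k\cdot\mathrm{I}_4$.

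The main difficulty will be the careful bookkeeping of the joint cocycle on the enlarged index set $\mu_2\times\ZZ/2\ZZ\times S_3$, together with some care in the case that $W$ is disconnected (each connected component has $k$ as global functions, and joint invariance under the two cocycle actions forces the components to carry the same scalar). The reason the argument goes through cleanly is the structural independence of the two cocycles: $\phi$ acts only on the $\mu_2\times\ZZ/2\ZZ$ factor of $W\times_X W$ while $\gamma$ acts only on the $S_3$ factor, so the two invariance conditions can be imposed sequentially and each reduces to one of the previously verified calculations for $\mathcal{Q}$ or $\mathcal{P}$ in isolation. The conclusion that $(\mathcal{Q}\otimes\mathcal{P})(X) = k$ then allows us to invoke \Cref{rem: assumptions} and conclude that the hypotheses of \Cref{thm: zero} apply to the canonical quadratic pair $(\sigma_0,f_0)$ on $\mathcal{Q}\otimes\mathcal{P}$.
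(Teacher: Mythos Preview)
Your proposal is correct and follows essentially the same strategy as the paper: pass to the common cover $W=X'\times_X Z$, identify the cocycle for $\mathcal{Q}\otimes\mathcal{P}$ as the tensor of the two individual cocycles over the index set $(\mu_2\times\ZZ/2\ZZ)\times S_3$, and compute the invariant matrices in $\Mat_4(k)$. The paper in fact identifies $W$ explicitly as $Z'=E\times_k E_2$, so it is connected and your worry about multiple components is moot.

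The only noteworthy difference is the order of the two reductions. The paper first imposes invariance under $\phi_1\otimes\Id$ by a direct $4\times 4$ matrix calculation, obtaining $B=I_2\otimes D$, and then observes that $D$ must be fixed by all of $\PGL_2(\FF_2)$, hence scalar. You do it the other way: first impose invariance under $\Id\otimes g$ for $g\in S_3$, use that $\GL_2(\FF_2)$ spans $\Mat_2(k)$ so the double centralizer gives $B=C\otimes I_2$, and then recycle the already-proved fact $\mathcal{Q}(X)=k$ from \Cref{global_sections_Q} to force $C$ scalar. Your route is marginally cleaner in that it reuses \Cref{global_sections_Q} wholesale rather than partially reproducing that computation, and the spanning/centralizer step is more conceptual than the paper's explicit entry-chase; but there is no substantive difference in method.
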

\begin{proof}
Since $\cQ$ is defined over the cover $X' \to X$ and $\cP$ is defined over the cover $Z \to X$, we will be able to find a cocycle for the tensor product $\cQ\otimes\cP$ on the fppf cover $X'\times_X Z \to X$ and use this to calculate the global sections. Therefore, we begin by identifying this fiber product. Just as $m\colon Y' \to Y$ and $m_X \colon X' \to X$ are multiplication by $2$ maps on the first factor, we can set $Z' = Z$ and define
\begin{align*}
m_Z \colon Z' = E\times_k E_2 &\to E\times_k E_2 = Z \\
(e,e_2) &\mapsto (2e,e_2).
\end{align*}
We then get a commutative diagram
\[
\begin{tikzcd}
Z' \ar[r,"m_Z"] \ar[d,"\rho"] & Z \ar[d,"\rho"] \\
Y' \ar[r,"m"] \ar[d,"\pi"] & Y \ar[d,"\pi"] \\
X' \ar[r,"m_X"] & X
\end{tikzcd}
\]
in which all square are cartesian, thus showing that $X'\times_X Z \cong Z'$. Since $Z\to X$ is an $S_3$--torsor and $X'\to X$ is a $(\mu_2\times\ZZ/2\ZZ)$--torsor, the cover $Z' \to X$ is a $(\mu_2\times\ZZ/2\ZZ)\times_k S_3$--torsor. Thus, $Z'\times_X Z' \cong Z'\times_k (\mu_2\times\ZZ/2\ZZ)\times_k S_3$ and we obtain a large diagram
\[
\begin{tikzcd}
Z'\times_k (\mu_2\times \ZZ/2\ZZ)\times_k S_3 \ar[dr,yshift={0.5ex},xshift={0.25ex},"\pr_1"] \ar[dr,yshift={-0.5ex},xshift={-0.25ex},swap,"\zeta"] \ar[rr,"f_1"] \ar[dd,"f_2"]  & &[2em] Z \times_k S_3 \ar[d,swap,xshift={-0.5ex},"\pr_1"] \ar[d,xshift={+0.5ex},"\mu"] \\
 & Z' \ar[r,"m_Z"] \ar[d] \ar[dr] & Z \ar[d] \\
X'\times_k (\mu_2\times\ZZ/2\ZZ) \ar[r,yshift={-0.5ex},swap,"\beta_X"] \ar[r,yshift={0.5ex},"\pr_1"] & X' \ar[r,"m_X"] & X
\end{tikzcd}
\]
where $f_1 = (m_Z\times\Id)\circ \pr_{13}$ and $f_2 = ((\pi\circ\rho)\times\Id)\circ \pr_{12}$. The map $\mu$ was introduced above, the map $\beta_X$ is the analogue of the map $\beta \colon Y'\times(\mu_2\times\ZZ/2\ZZ) \to Y'$, and $\zeta$ similarly applies the group action of $(\mu_2\times \ZZ/2\ZZ)\times_k S_3$ on $Z'$. In particular, the maps $\beta_X$, $\zeta$, and $\mu$ play the roles of the second projections. Therefore, the diagram above commutes if one considers only the first projections from each set of double arrows, and it also commutes if one only considers $\beta_X$, $\zeta$, and $\mu$ instead.

Now, it is clear that $\cQ$ will be split over $Z'$ and therefore given by the pullback along $f_2$ of its cocycle in $\PGL_2(X'\times_k(\mu_2\times \ZZ/2\ZZ))$ identified in \Cref{cocyle_for_Q}. Likewise, $\cP$ will be split over $Z'$ and be defined by the pullback of its cocycle in $\PGL_2(Z\times_k S_3)$ along $f_1$. We have that
\[
\cO_{Z'\times_k (\mu_2\times \ZZ/2\ZZ)\times_k S_3}(Z'\times_k (\mu_2\times \ZZ/2\ZZ)\times_k S_3) = \left(k[x]/(x^2-1)\right)^2 \otimes k^6
\]
and that
\begin{align*}
f_1^* = 1\otimes \Id &\colon k^6 \to \left(k[x]/(x^2-1)\right)^2 \otimes k^6, \text{ and}\\
f_2^* = \Id\otimes 1 &\colon \left(k[x]/(x^2-1)\right)^2 \to k^6.
\end{align*}
This means that $\cQ$ is defined by the cocycle $\phi\otimes (1,1,1,1,1,1)$ and $\cP$ is defined by the cocycle $(1,1)\otimes(g_1,g_2,g_3,g_4,g_5,g_6)$ where $g_i$ are the elements of $\PGL_2(\FF_2)$. Using the isomorphism
\begin{align*}
\left(k[x]/(x^2-1)\right)^2 \otimes k^6 &\iso \left(k[x]/(x^2-1)\right)^{12} \\
(p_1,p_2)\otimes(a_1,\ldots,a_6) &\mapsto (p_1a_1,p_1a_2,\ldots,p_1a_6,p_2a_1,p_2a_2,\ldots,p_2a_6)
\end{align*}
and setting
\[
\phi_1 = \Inn\left(\begin{bmatrix} 0 & 1 \\ x & 0 \end{bmatrix}\right) \text{ and } \phi_2 = \Inn\left(\begin{bmatrix} 1 & 0 \\ 0 & x \end{bmatrix}\right)
\]
we obtain the cocycles
\begin{align*}
&(\phi_1,\phi_1,\phi_1,\phi_1,\phi_1,\phi_1,\phi_2,\phi_2,\phi_2,\phi_2,\phi_2,\phi_2), \text{ and}\\
&(g_1,g_2,g_3,g_4,g_5,g_6,g_1,g_2,g_3,g_4,g_5,g_6)
\end{align*}
in $\PGL_2(k[x]/(x^2-1))^{12}$ for $\cQ$ and $\cP$, respectively. Hence, the tensor product algebra $\cQ\otimes \cP$ is defined by the cocycle
\[
(\phi_1\otimes g_1,\ldots,\phi_1\otimes g_6,\phi_2\otimes g_1,\ldots,\phi_2\otimes g_6) \in \PGL_4(k[x]/(x^2-1))^{12}.
\]
Since $\cO_{Z'}(Z') = k$, the restriction maps along both projections $Z'\times_X Z' \to Z'$ are simply the diagonal map $k\to (k[x]/(x^2-1))^{12}$. Thus, we have that
\[
(\cQ\otimes\cP)(X) = \left\{ B\in \Mat_4(k) \mid (\phi_i\otimes g_j)(B) = B \text{ for } i\in\{1,2\}, j\in\{1,\ldots,6\} \right\}.
\]
Taking $B = [b_{ij}]_{1\leq i,j\leq 4} \in \Mat_4(k)$ indexed as usual, we have that
\[
(\phi_1\otimes\Id)(B) = \begin{bmatrix} b_{33} & b_{34} & b_{31}x & b_{32}x \\ b_{43} & b_{44} & b_{41}x & b_{42}x \\ b_{13}x & b_{14}x & b_{11} & b_{12} \\ b_{23}x & b_{24}x & b_{21} & b_{22} \end{bmatrix}
\]
and so to be fixed, $B$ must be of the form
\[
B = \begin{bmatrix} a & b & 0 & 0 \\ c & d & 0 & 0 \\ 0 & 0 & a & b \\ 0 & 0 & c & d \end{bmatrix} = I_2 \otimes \begin{bmatrix} a & b \\ c & d \end{bmatrix}.
\]
To then be fixed by all $\phi_1\otimes g_j$, the matrix $\begin{bmatrix} a & b \\ c & d \end{bmatrix}$ must be fixed by $\PGL_2(\FF_2)$, but only the scalar matrices are fixed this way. Thus $B=aI_4$ for some $a\in k$ and so we obtain that
\[
(\cQ\otimes\cP)(X) = k,
\]
as claimed.
\end{proof}

\subsection{Main theorem} 
We continue using the same notation as in the previous section, i.e.\ $k$ is an algebraically closed characteristic 2 field, $X$ is an Igusa surface realized as a quotient of a product of two ordinary elliptic curves over $k$, and $\mathcal{Q}$ and $\mathcal{P}$ are the two particular quaternion Azumaya algebras on $X$ constructed above. We will need the following lemma:

\begin{lem}\label{lem: qpobs} Let $k,X,\mathcal{Q}$, and $\mathcal{P}$ be as above. Let $\tau_3=\mathrm{Spec}(k[x]/(x-1)^3)$ and $\mu_2=\mathrm{Spec}(k[x]/(x-1)^2)$ with canonical inclusion $\mu_2\subset \tau_3$. Then there exists a deformation $\mathcal{B}$ of either $\mathcal{Q}$ or $\mathcal{P}$ to $X_{\mu_2}$ for which $ob(\mathcal{B};X_{\tau_3}/X_{\mu_2})\neq 0$.

In other words, at least one of either $\mathcal{Q}$ or $\mathcal{P}$ has obstructed deformations.
\end{lem}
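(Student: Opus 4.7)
The plan is to argue by contradiction. Suppose every deformation of $\mathcal{Q}$ and of $\mathcal{P}$ to $X_{\mu_2}$ lifts to $X_{\tau_3}$. The first observation is that both rank $2$ bundles $\mathcal{V}$ and $\mathcal{W}$ underlying $\mathcal{Q}$ and $\mathcal{P}$ are simple: $\mathrm{End}(\mathcal{V}) = \cEnd(\mathcal{V})(X) = \mathcal{Q}(X) = k$ by \Cref{global_sections_Q}, and $\mathrm{End}(\mathcal{W}) = \mathcal{P}(X) = k$ by a cocycle computation directly analogous to the one in \Cref{lem: globalsectionsofqp} restricted to $\mathcal{P}$ alone.

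The central cohomological input is that, in characteristic $2$, the composition $\mathcal{O}_X \hookrightarrow \cEnd(\mathcal{V}) \xrightarrow{\mathrm{tr}} \mathcal{O}_X$ is multiplication by $2 = 0$; hence the trace factors as $\mathrm{tr} = \tilde{\mathrm{tr}} \circ \pi$, where $\pi\colon \cEnd(\mathcal{V}) \twoheadrightarrow \mathcal{Q}/\mathcal{O}_X$ is the quotient and $\tilde{\mathrm{tr}}\colon \mathcal{Q}/\mathcal{O}_X \to \mathcal{O}_X$ is canonical. Now fix a first-order deformation $\mathcal{V}_{\mu_2}$ of $\mathcal{V}$ corresponding to a class $v \in \mathrm{H}^1(X,\cEnd(\mathcal{V}))$, together with $\mathcal{Q}_{\mu_2} := \cEnd(\mathcal{V}_{\mu_2})$ and the line bundle $\det(\mathcal{V}_{\mu_2})$. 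Combining \Cref{lem: detob}, \Cref{cor: forg}, and the factorization of the trace, the obstruction class $ob(\mathcal{V}_{\mu_2}; X_{\tau_3}/X_{\mu_2}) \in \mathrm{H}^2(X,\cEnd(\mathcal{V}))$ maps under $\pi$ to $ob(\mathcal{Q}_{\mu_2}; X_{\tau_3}/X_{\mu_2})$ and under $\mathrm{tr}$ to $ob(\det(\mathcal{V}_{\mu_2}); X_{\tau_3}/X_{\mu_2})$, with the latter factoring through the former. So if $\mathcal{Q}_{\mu_2}$ extends, then $\det(\mathcal{V}_{\mu_2})$ extends. Under our standing assumption this holds for every $v$; I conclude that the image of the trace map $\mathrm{tr}_*\colon \mathrm{H}^1(X,\cEnd(\mathcal{V})) \to \mathrm{H}^1(X,\mathcal{O}_X)$ consists of tangent vectors of unobstructed line bundle deformations, and thus is contained in the $E_2$-summand of the splitting $\mathbf{Pic}^0_{X/k} \cong \mu_2 \times E_2/\langle t\rangle$ from \Cref{rem: explicitob}. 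The identical conclusion holds for $\mathcal{W}$.

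The remaining step, which I expect to be the main obstacle, is to show that $\mathrm{tr}_*$ for \emph{at least one} of $\mathcal{V}$ or $\mathcal{W}$ is \emph{not} contained in this $E_2$-direction, i.e., has nontrivial $\mu_2$-component. This reduces to an explicit \v{C}ech-cocycle computation on the fppf covers $X' \to X$ and $Z \to X$ used to split $\mathcal{Q}$ (see \Cref{cocyle_for_Q}) and $\mathcal{P}$ (via the $S_3$-torsor $Z \to X$). I would exhibit a specific $1$-cocycle obtained by pullback and descent from $E_1$-data, since the $\mu_2$-component of $\mathbf{Pic}^0_{X/k}$ originates from the ordinary elliptic curve $E_1$ in $Y = E_1 \times E_2$, and compute its trace directly to verify it lies outside the $E_2$-direction. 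Once such a class is exhibited, the associated first-order deformation of $\mathcal{V}$ (respectively $\mathcal{W}$) yields, via its endomorphism algebra, a deformation of $\mathcal{Q}$ (respectively $\mathcal{P}$) to $X_{\mu_2}$ whose extension to $X_{\tau_3}$ is obstructed by \Cref{rem: explicitob}, contradicting our standing assumption and completing the proof.
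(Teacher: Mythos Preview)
Your overall architecture matches the paper's: reduce the question to showing that the trace maps
\[
\mathrm{tr}_*\colon \mathrm{H}^1(X,\cEnd(\mathcal{V}))\to \mathrm{H}^1(X,\mathcal{O}_X)
\quad\text{and}\quad
\mathrm{tr}_*\colon \mathrm{H}^1(X,\cEnd(\mathcal{W}))\to \mathrm{H}^1(X,\mathcal{O}_X)
\]
cannot both land in the unobstructed (i.e.\ $E_2/\langle t\rangle$) direction of the tangent space of $\mathbf{Pic}^0_{X/k}$. Your trace-factoring argument through $\mathcal{Q}/\mathcal{O}_X$ is correct and is exactly what the paper uses implicitly.

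The gap is in your final step. You propose to exhibit, by an explicit \v{C}ech-cocycle computation on the fppf covers $X'\to X$ and $Z\to X$, a class whose trace has nonzero $\mu_2$-component. You do not carry this out, and it is not clear it is tractable: neither cover is Zariski, so extracting a concrete element of $\mathrm{H}^1(X,\mathcal{O}_X)$ and pinning down its component in the decomposition $\mathrm{T}_s\mathbf{Pic}^0_{X/k}\cong \mathrm{T}_x\mu_2\oplus \mathrm{T}_y E_2/\langle t\rangle$ would itself require nontrivial work.

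The paper bypasses any such computation by a structural argument. Because the strong obstruction $\Omega(\mathcal{Q},\sigma_1)$ and $\Omega(\mathcal{P},\sigma_2)$ are nonzero, \Cref{cor: quaternion} gives module splittings $\mathcal{Q}\cong \mathcal{E}_0\oplus \mathcal{E}_1$ and $\mathcal{P}\cong \mathcal{F}_0\oplus \mathcal{F}_1$ with $\mathcal{E}_0$, $\mathcal{F}_0$ nontrivial self-extensions of $\mathcal{O}_X$. The key observation is that $(\mathcal{Q}\otimes\mathcal{P})(X)=k$ (\Cref{lem: globalsectionsofqp}) forces $\mathcal{E}_0\not\cong \mathcal{F}_0$, so the extension classes $\alpha(\mathcal{E}_0),\alpha(\mathcal{F}_0)$ are linearly independent and span $\mathrm{H}^1(X,\mathcal{O}_X)$. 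Then, Serre duality applied to the short exact sequence $0\to(\mathcal{Q}/\mathcal{O}_X)^\vee\to\mathcal{Q}\xrightarrow{\mathrm{trd}}\mathcal{O}_X\to 0$ identifies the image of $\mathrm{tr}_*$ as the $1$-dimensional annihilator of $k\cdot\alpha(\mathcal{E}_0)$, and likewise for $\mathcal{P}$. Two distinct $1$-dimensional subspaces cannot both equal the single unobstructed line, and you are done. I would recommend replacing your proposed cocycle computation with this argument; it is both shorter and uses exactly the ingredient (\Cref{lem: globalsectionsofqp}) that motivated the choice of $\mathcal{P}$ in the first place.
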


\begin{proof}
We write $(\mathcal{Q},\sigma_1,f_1)$ for the quadratic triple associated to $\mathcal{Q}$ by \Cref{global_sections_Q} and $(\mathcal{P},\sigma_2,f_2)$ for the quadratic triple associated to $\mathcal{P}$ by \Cref{rem: torsquat}. Since both $\mathcal{Q}$ and $\mathcal{P}$ are neutral, we have $\mathcal{Q}=\cEnd(\mathcal{V})$ and $\mathcal{P}=\cEnd(\mathcal{W})$ for two rank $2$ vector bundles $\mathcal{V}$ and $\mathcal{W}$ on $X$. Our proof works in two parts. First, we show that the obstructions to deforming either $\mathcal{Q}$ or $\mathcal{P}$ (or deformations of these) are equivalent with the obstructions to deforming $\mathcal{V}$ or $\mathcal{W}$ (or deformations of these) respectively. Second, we show that at least one of $\mathcal{V}$ or $\mathcal{W}$ has obstructed deformations by lifting the obstruction for the associated determinant bundles.

To see the first claim, note that by applying Serre duality, the exact sequence
\[
\rH^2(X,\cO_X) \to \mathrm{H}^2(X,\mathcal{Q})\rightarrow \mathrm{H}^2(X,\mathcal{Q}/\mathcal{O}_X) \to \rH^3(X,\cO_X)=0
\]
is dual to the exact sequence
\[
0 \to \left(\mathcal{Q}/\mathcal{O}_X\right)^\vee(X) \to \mathcal{Q}^\vee(X) \to \cO_X^\vee(X).
\]
where the map $\cQ^\vee(X) \to \cO_X^\vee(X)$ sends a functional to its evaluation on scalars. Due to the isomorphism $\cQ\cong \cQ^\vee$ given by the reduced trace, any global section in $\cQ^\vee(X)$ is of the form $\Trd_\cQ(x\cdot \und)$ for some $x\in \cQ(X)$. Now, since $\mathrm{deg}(\cQ)=2$, $\cQ(X)=k$, and $\mathrm{char}(k)=2$, the evaluation of any such global section on a scalar will simply be
\[
\Trd_\cQ(x\left(c \cdot 1_\cQ\right)) = 0
\]
for $x,c\in k$. Thus, the map $\mathcal{Q}^\vee(X) \to \cO_X^\vee(X)$ is identically zero, which going back through duality, means that $\rH^2(X,\cO_X) \to \mathrm{H}^2(X,\mathcal{Q})$ is the zero morphism and we have that
\[
\mathrm{H}^2(X,\mathcal{Q})\iso \mathrm{H}^2(X,\mathcal{Q}/\mathcal{O}_X)
\]
is an isomorphism. The same argument applies to show that the canonical map \[\mathrm{H}^2(X,\mathcal{P})\iso \mathrm{H}^2(X,\mathcal{P}/\mathcal{O}_X)\] is also an isomorphism. Since these maps can be considered as taking the obstructions to deforming the underlying vector bundle to the obstructions to deforming the endomorphism algebra of these bundles (see \Cref{lem: detob}), we see that it suffices to show one of $\mathcal{V}$ or $\mathcal{W}$ has obstructed deformations. The remainder of the proof is devoted to showing this fact.

By construction, both $\mathcal{Q}$ and $\mathcal{P}$ have nonvanishing strong obstruction. By either direct analysis or the use of \Cref{cor: quaternion}, we can therefore write $\mathcal{Q}=\mathcal{E}_0\oplus \mathcal{E}_1$ and $\mathcal{P}=\mathcal{F}_0\oplus \mathcal{F}_1$ for four rank $2$ vector bundles $\mathcal{E}_0,\mathcal{E}_1,\mathcal{F}_0,\mathcal{F}_1$ on $X$. Further, we can assume that $\mathcal{E}_0$ and $\mathcal{F}_0$ are given as nonsplit extensions \[0\rightarrow \mathcal{O}_X\rightarrow \mathcal{E}_0\rightarrow \mathcal{O}_X\rightarrow 0 \quad\mbox{and} \quad 0\rightarrow \mathcal{O}_X\rightarrow \mathcal{F}_0\rightarrow \mathcal{O}_X\rightarrow 0.\] We note that $\mathcal{E}_0\not\cong \mathcal{F}_0$ since, if there was an isomorphism $\mathcal{E}_0\cong \mathcal{F}_0$, then there would be an inclusion \[ \cEnd(\mathcal{E}_0)\cong \mathcal{E}_0\otimes \mathcal{F}_0^\vee\subset \mathcal{Q}\otimes \mathcal{P}^\vee \cong \mathcal{Q}\otimes \mathcal{P}.\] However, the identity endomorphism of $\mathcal{E}_0$ and the composition $\mathcal{E}_0\twoheadrightarrow \mathcal{O}_X\subset \mathcal{E}_0$ coming from the morphisms defining the extension of $\mathcal{E}_0$ are two independent global sections of $\cEnd(\mathcal{E}_0)$ and $(\mathcal{Q}\otimes \mathcal{P})(X)\cong k$ by \Cref{lem: globalsectionsofqp}.

Since we have $\mathcal{Q}(X)=k$ and $\mathcal{P}(X)=k$, we find that $\mathcal{E}_1(X)=\mathcal{F}_1(X)=0$ and $(\mathcal{Q}/\mathcal{O}_X)(X)=(\mathcal{P}/\mathcal{O}_X)(X)=k$. There is then a commutative diagram with exact rows and vertical inclusions \[\begin{tikzcd} 0\arrow{r} & \mathcal{O}_X\arrow{r} &\mathcal{Q}\arrow{r}&  \mathcal{Q}/\mathcal{O}_X\arrow{r} &0\\ 0\arrow{r} & \mathcal{O}_X\arrow[equals]{u}\arrow{r} & \mathcal{E}_0\arrow{u}\arrow{r} & \mathcal{O}_X\arrow{u}\arrow{r} & 0.\end{tikzcd}\] It follows that the first boundary square in cohomology \[\begin{tikzcd}\mathrm{H}^0(X,\mathcal{Q}/\mathcal{O}_X)\arrow{r} & \mathrm{H}^1(X,\mathcal{O}_X)\\
\mathcal{O}_X(X)\arrow[equals]{u}\arrow{r} & \mathrm{H}^1(X,\mathcal{O}_X)\arrow[equals]{u}\end{tikzcd}\] sends $1\in\mathrm{H}^0(X,\mathcal{Q}/\mathcal{O}_X)=k$ to the class $\alpha(\mathcal{E}_0)$ in $\mathrm{H}^1(X,\mathcal{O}_X)\cong \mathrm{Ext}^1(\mathcal{O}_X,\mathcal{O}_X)$ of the given extension of $\mathcal{E}_0$. Similarly, the connecting map \[k=\mathrm{H}^0(X,\mathcal{P}/\mathcal{O}_X)\rightarrow \mathrm{H}^1(X,\mathcal{O}_X)\] sends $1$ to the extension class $\alpha(\mathcal{F}_0)$ of $\mathcal{F}_0$. Note that since $\mathcal{E}_0\not\cong \mathcal{F}_0$, we must have that these classes are spanning, i.e.\ $\mathrm{Span}_k\{\alpha(\mathcal{E}_0),\alpha(\mathcal{F}_0)\}=\mathrm{H}^1(X,\mathcal{O}_X)$.

 Consider, now, the exact sequences \[0\rightarrow (\mathcal{Q}/\mathcal{O}_X)^\vee\rightarrow \mathcal{Q}\xrightarrow{\mathrm{trd}_{\cQ}} \mathcal{O}_X\rightarrow 0\quad \mbox{and}\quad 0\rightarrow (\mathcal{P}/\mathcal{O}_X)^\vee\rightarrow \mathcal{P}\xrightarrow{\mathrm{trd}_{\cP}} \mathcal{O}_X\rightarrow 0.\] These induce exact sequences in cohomology \[\mathrm{H}^1(X,\mathcal{Q})\rightarrow \mathrm{H}^1(X,\mathcal{O}_X)\rightarrow \mathrm{H}^2(X,(\mathcal{Q}/\mathcal{O}_X)^\vee)\rightarrow 0\] and also \[\mathrm{H}^1(X,\mathcal{P})\rightarrow \mathrm{H}^1(X,\mathcal{O}_X)\rightarrow \mathrm{H}^2(X,(\mathcal{P}/\mathcal{O}_X)^\vee)\rightarrow 0.\] By Serre duality, the rightmost nonzero arrows in these sequences are dual to the inclusions of the $k$-span of $\alpha(\mathcal{E}_0)$ and $\alpha(\mathcal{F}_0)$ respectively.

By \Cref{lem: detob}, the maps induced by the reduced traces \begin{equation}\label{eq: trdqph1} \mathrm{H}^1(X,\mathcal{Q})\rightarrow \mathrm{H}^1(X,\mathcal{O}_X)\quad \mbox{and}\quad \mathrm{H}^1(X,\mathcal{P})\rightarrow \mathrm{H}^1(X,\mathcal{O}_X)\end{equation} may be interpreted as taking a deformation, of $\mathcal{V}$ or $\mathcal{W}$ to $X_{\mu_2}$, to their associated determinant line bundle. Since the image of either map in \eqref{eq: trdqph1} is $1$-dimensional, and since these images lie in linearly independent directions, there is at least one deformation of either $\mathcal{V}$ or $\mathcal{W}$ that must have a determinant bundle with obstructed deformations to $X_{\tau_3}$, by \Cref{rem: explicitob}. Any such deformation of either $\mathcal{V}$ or $\mathcal{W}$ must then also have obstructed deformations to $X_{\tau_3}$. Setting $\mathcal{B}$ to be the endomorphism algebra associated to such a deformation finishes the proof.
\end{proof}

Now, combining all of our work so far, we can prove:

\begin{thm}\label{thm: main}
Let $(\sigma_0,f_0)$ be the canonical quadratic triple on the tensor product Azumaya $\mathcal{O}_X$-algebra $\mathcal{Q}\otimes \mathcal{P}$, cf.\ \Cref{norm_functor}. Let $\mu_2=\mathrm{Spec}(k[x]/(x-1)^2)$ and $\tau_3=\mathrm{Spec}(k[x]/(x-1)^3)$ with inclusion $i:\mu_2\subset \tau_3$. Then there exists a deformation 
$(\mathcal{A},\sigma,f)$ of $(\mathcal{Q}\otimes \mathcal{P},\sigma_0,f_0)$ to $X_{\mu_2}$ such that \[ob(\mathcal{A},\sigma,f; X_{\tau_3}/X_{\mu_2})\neq 0\quad \mbox{and}\quad ob(\mathcal{A};X_{\tau_3}/X_{\mu_2})=0.\]

In other words, the quadratic triple $(\mathcal{A},\sigma,f)$ on $X_{\mu_2}$ does not deform to $X_{\tau_3}$, but $\mathcal{A}$ does deform to $X_{\tau_3}$ when considered as an Azumaya algebra alone.
\end{thm}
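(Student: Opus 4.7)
The plan is to assemble the ingredients developed in the preceding sections: \Cref{lem: qpobs} supplies an obstructed deformation of one of the two quaternion factors, \Cref{deformations_through_stack_morphism} promotes pairs of quaternion deformations to quadratic triple deformations via the norm functor, and \Cref{cor: quartvan} forces the obstruction of the underlying Azumaya algebra to vanish.

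First I would invoke \Cref{lem: qpobs} to extract a deformation to $X_{\mu_2}$ of one of $\cQ$ or $\cP$ whose obstruction to $X_{\tau_3}$ is nonzero. Without loss of generality, assume this deformation is $\cQ'$ of $\cQ$, and let $\cP'$ be the pullback of $\cP$ along the canonical projection $X_{\mu_2}\to X$, so that $\cP'$ is the trivial deformation of $\cP$ to $X_{\mu_2}$. Applying the isomorphism of deformation functors from \Cref{deformations_through_stack_morphism} to the pair $(\cQ',\cP')$ produces the candidate deformation
\[
(\cA,\sigma,f) \;=\; \bigl(\cQ'\otimes_{\cO_{X_{\mu_2}}}\cP',\; \psi_1'\otimes\psi_2',\; f_N'\bigr)
\]
of $(\cQ\otimes\cP,\sigma_0,f_0)$ to $X_{\mu_2}$.

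To show $ob(\cA,\sigma,f;X_{\tau_3}/X_{\mu_2})\neq 0$, I would argue contrapositively using \Cref{deformations_through_stack_morphism} applied with $A = k[x]/(x-1)^3$: any lift of $(\cA,\sigma,f)$ to $X_{\tau_3}$ would correspond under the norm functor equivalence to a pair of lifts of $\cQ'$ and $\cP'$ individually to $X_{\tau_3}$. By construction, $\cQ'$ admits no such lift, so no lift of the quadratic triple exists, and hence its obstruction class is nontrivial.

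Finally, for the vanishing of $ob(\cA;X_{\tau_3}/X_{\mu_2})$, I would invoke the compatibility in \Cref{prop: quartvan}: the obstruction of the tensor product $\cQ'\otimes\cP'$ is the image of the pair $(ob(\cQ';X_{\tau_3}/X_{\mu_2}), ob(\cP';X_{\tau_3}/X_{\mu_2}))$ under the canonical map
\[
\mathrm{H}^2(X,\cQ/\cO_X\oplus \cP/\cO_X)\otimes_k J \longrightarrow \mathrm{H}^2(X,(\cQ\otimes\cP)/\cO_X)\otimes_k J,
\]
where $J = (x-1)^2\cdot k[x]/(x-1)^3$. Since $X$ is a smooth projective surface with $K_X\cong \cO_X$ in characteristic $2$, both $\cQ$ and $\cP$ have even degree, and $(\cQ\otimes\cP)(X) = k$ by \Cref{lem: globalsectionsofqp}, the hypotheses of \Cref{cor: quartvan} are satisfied, and this map is identically zero. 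Therefore $ob(\cA;X_{\tau_3}/X_{\mu_2})=0$, completing the proof. The bulk of the technical difficulty has been absorbed into the earlier lemmas; the main obstacle is not the assembly itself but the construction of $\cQ$ and $\cP$ in the previous subsection, specifically ensuring $(\cQ\otimes\cP)(X)=k$ so that \Cref{cor: quartvan} can apply to kill the Azumaya obstruction while \Cref{lem: qpobs} simultaneously forces the quadratic triple obstruction to persist.
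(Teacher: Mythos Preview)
Your proposal is correct and follows essentially the same approach as the paper's proof: invoke \Cref{lem: qpobs} to obtain an obstructed quaternion deformation, tensor it with the trivial deformation of the other factor, use the norm-functor equivalence \Cref{deformations_through_stack_morphism} to show the quadratic triple cannot deform, and then apply \Cref{prop: quartvan} together with \Cref{cor: quartvan} (the paper also cites the equivalent route via \Cref{cor: forg} and \Cref{thm: zero}) to kill the Azumaya obstruction. The only cosmetic difference is that the paper treats both cases of which factor is obstructed explicitly rather than by symmetry.
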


\begin{proof}
Let $\pi_2:X_{\mu_2}\rightarrow X$ be the projection. Let $\mathcal{B}$ be a quaternion algebra with obstructed deformations to $X_{\tau_3}$ as shown to exist in \Cref{lem: qpobs}. If $\cB$ is a deformation of $\cQ$, set $\mathcal{A}=\mathcal{B}\otimes \pi_2^*\mathcal{P}$, and if $\cB$ is a deformation of $\cP$, set $\cA=\pi_2^*\mathcal{Q}\otimes \mathcal{B}$. Equip $\mathcal{A}$ with its canonical quadratic pair $(\sigma,f)$.
It is immediate that $(\mathcal{A},\sigma,f)$ is a deformation of $(\mathcal{Q}\otimes \mathcal{P},\sigma_0,f_0)$.

To see that $(\mathcal{A},\sigma,f)$ does not deform to $X_{\tau_3}$, we use \Cref{deformations_through_stack_morphism}. Indeed, since $\mathcal{B}$ does not deform, neither does $(\mathcal{A},\sigma,f)$. The obstruction $ob(\mathcal{A},\sigma,f;X_{\tau_3}/X_{\mu_2})$ must then be nonzero. However, due to both \Cref{cor: forg} and \Cref{thm: zero} (or \Cref{prop: quartvan} and \Cref{cor: quartvan} noting \Cref{rem: specialcase}), all such obstructions vanish when one forgets the quadratic pair, i.e.\ $ob(\mathcal{A};X_{\tau_3}/X_{\mu_2})=0$.\end{proof}

\bibliographystyle{amsalpha}
\bibliography{bib}
\end{document}